\newtheorem{theorem}{Theorem}[section]
\newtheorem{lemma}[theorem]{Lemma}
\newtheorem{proposition}[theorem]{Proposition}
\newtheorem{remark}[theorem]{Remark}
\numberwithin{equation}{section}
\newcommand*{\N}{\ensuremath{\mathbb{N}}}
\newcommand*{\Z}{\ensuremath{\mathbb{Z}}}
\newcommand*{\R}{\ensuremath{\mathbb{R}}}
\newcommand*{\C}{\ensuremath{\mathbb{C}}}
\newcommand*{\F}{\ensuremath{\mathcal{F}}}
\newcommand*{\e}{\ensuremath{\varepsilon}}
\author{Paul Bergold}
\address{(Paul Bergold) Department of Mathematics, University of Surrey, Guildford, UK}
\email{p.bergold@surrey.ac.uk}
\author{Caroline Lasser}
\address{(Caroline Lasser) Zentrum Mathematik, Technische Universit\"at M\"unchen, Germany}
\email{classer@ma.tum.de}
\date{\today}
\keywords{Schr\"odinger equation, Thawed Gaussian approximations, Gaussian wave packet transforms, Quadrature rules}
\subjclass[2010]{42A38, 65D32, 65P10, 65Z05, 81Q20}
\title[An error bound for the TSTG propagation method]{An error bound for the\\ time-sliced thawed Gaussian propagation method}
\begin{document}
\begin{abstract}
	We study the time-sliced thawed Gaussian propagation method, which was recently proposed for solving the time-dependent Schr\"odinger equation.
 	We introduce a triplet of quadrature-based analysis, synthesis and re-initialization operators to give a rigorous mathematical formulation of the method.
	Further, we derive combined error bounds for the discretization of the wave packet transform and the time-propagation of the thawed Gaussian basis functions.
	Numerical experiments in 1D illustrate the theoretical results.
\end{abstract}
\maketitle

\section{Introduction}
Algorithms for simulations of quantum dynamics play a central role in the field of numerical analysis since these methods nowadays are the computational keystone in many research areas such as quantum chemistry.
In this paper we consider the time-dependent Schr\"odinger equation
\begin{align}\label{eq:tdse}
	i\e\partial_t\psi(x,t)
	=-\frac{\e^2}{2}\Delta_x\psi(x,t)+V(x)\psi(x,t),\quad
	0<\e\ll 1,
\end{align}
where the function $V\colon\R^d\to\R$ is a smooth potential of sub-quadratic growth and the complex-valued wave function $\psi\colon\R^d\times\R\to\C$ depends on $x\in\R^d$ and $t\in\R$.
The right hand-side of \eqref{eq:tdse} is given by the action of the semiclassical operator
\begin{align*}
	H
	=H^\e
	:=-\frac{\e^2}{2}\Delta_x+V,
\end{align*}
as it results for example from the time-dependent Born--Oppenheimer approximation, where the small positive parameter $\e^2$ represents a mass ratio of nuclei and electrons, see e.g. \cite[chapter~II.2]{Lubich:2008}.
Since we assume that the potential is of sub-quadratic growth, $H$ is a self-adjoint linear operator on $L^2(\R^d)$ and therefore the spectral theorem provides the unitary propagator
\begin{align*}
	U(t)
	:=e^{-iHt/\e}
	\quad\text{for all $t\in\R$},
\end{align*}
which guarantees existence and uniqueness of the solution $\psi(t)=U(t)\psi_0$ for a given initial wave function $\psi_0\in L^2(\R^d)$.\\

Motivated by questions in physics and chemistry, various numerical algorithms for simulations of quantum dynamics have been developed during the last decades.
For example, reduced models via variational approximations have been investigated, which include the multi-configuration methods such as MCTDH, see \cite{Meyer:1990}, the variational multi-configuration Gaussian wave packet (vMCG) method \cite{Worth:2004}, or the variational Gaussian wave packets \cite{Heller:1976,Coalson:1990}.
Semiclassical approaches such as Hagedorn wave packets \cite{Gradinaru:2014,Faou:2009}, Gaussian beams \cite{Leung:2009,Zheng:2014,Liu:2013} or the Herman--Kluk propagator \cite{Herman:1984,Lasser:2017} have been developed to include quantum effects especially for high-dimensional systems, for which standard grid-based numerical methods are infeasible.\\

Recently, Kong~\emph{et al.} have proposed the time-sliced thawed Gaussian (TSTG) propagation method, see \cite{Kong:2016}, in which Gaussian wave packets are decomposed into linear combinations of Gaussian basis functions without the need of multidimensional numerical integration.
The resulting approximations of wave packets can be obtained by discretizing the inversion formula for the so-called FBI (Fourier-Bros-Iagolnitzer) transform, which is used in microlocal analysis to analyze the distribution of wave packets in position and momentum space simultaneously, see e.g. \cite{Martinez:2002}.
According to the FBI inversion formula, see e.g. \cite[Proposition~5.1]{Lasser:2020}, any square-integrable function $\psi\in L^2(\R^d)$ can be decomposed as
\begin{align}\label{eq:inverse_FBI}
	\psi
	=(2\pi\e)^{-d}\int_{\R^{2d}}\left\langle g_z\mid\psi\right\rangle g_z\,\mathrm{d}z,
\end{align}
where the inner product in $L^2(\R^d)$ is taken antilinear in its first and linear in its second argument and the semiclassically scaled wave packet $g_z\in\mathcal{S}(\R^d)$ is defined for a given Schwartz function $g\colon\R^d\to\C$ of unit norm, that could be but needn't be a Gaussian, and a phase space center $z=(q,p)\in\R^{2d}$ by
\begin{align}\label{eq:def_gz}
	g_z(x)
	:=\e^{-d/4} g\left(\frac{x-q}{\sqrt\e}\right)e^{ip\cdot(x-q)/\e},
	\quad x\in\R^d.
\end{align}
Wave packets of this form are typically used for numerical computations in quantum molecular dynamics and have been extensively studied in the literature, sometimes with different conventions for the phase factor, e.g. $e^{ip\cdot\left(x-q/2\right)/\e}$ in \cite[chapter~1.1.2]{Combescure:2012}.\\

A direct discretization of the integral in \eqref{eq:inverse_FBI} using a multivariate quadrature formula in phase space yields an approximation of the form
\begin{align}\label{eq:inverse_FBI_discrete}
	\psi
	\approx\sum_{\mathbf{k}\in\mathcal{K}}c_{\mathbf{k}}(\psi)\,g_{\mathbf{k}},
\end{align}
where $\mathcal{K}\subset\N^{2d}$ is a given finite multi-index set, e.g. a cube $\{\mathbf{k}\in\N^{2d}\,:\,k_j\le K\}$ or a simplex $\{\mathbf{k}\in\N^{2d}\,:\,\sum_{j=1}^{2d}k_j\le K\}$, the representation coefficients $c_{\mathbf{k}}(\psi)\in\C$ are complex numbers, depending on $\psi$ and the underlying discretization scheme, and the functions $g_{\mathbf{k}}:=g_{z_\mathbf{k}}$ are wave packets centered at the grid points $z_{\mathbf{k}}\in\R^{2d}$.
In particular, if both the represented function $\psi$ and the basis functions $g_{\mathbf{k}}$ are Gaussian wave packets, then the coefficients $c_{\mathbf{k}}(\psi)$, which for this case essentially sample inner products $\langle g_{\mathbf{k}}\mid\psi\rangle$ of two Gaussians, can be calculated using a formula for multidimensional Gaussian integrals, see Lemma~\ref{fact:inner}.
The choice of Gaussian functions is particularly attractive for time propagation, since the time-dependent Schr\"odinger equation with quadratic potential leaves the class of Gaussian wave packets invariant. 
This fact can be used to approximate the time-evolution of Gaussian wave packets in anharmonic potentials, a distinction being made as to whether the width matrix is chosen to be constant in time (frozen) or time-dependent (thawed) and we note that the wave packet transform \eqref{eq:inverse_FBI} has been used for different approximation schemes such as the Herman--Kluk propagator (frozen) or Gaussian beams (thawed).\\

The discretization in \eqref{eq:inverse_FBI_discrete} with Gaussian basis functions and uniform Riemann sums was used by Kong~\emph{et al.} and can be viewed as one of the main ingredients for the design of the TSTG method, which we investigate in the present paper.
\begin{remark}
	In the following we work with time-evolved basis functions and to emphasize that we distinguish between ``original'' and time-evolved basis functions, we write $g_{\mathbf{k},0}$ for the original and $g_{\mathbf{k}}(t)$ for the time-evolved basis functions.
\end{remark}
Starting from the representation of the initial wave function according to \eqref{eq:inverse_FBI_discrete}, the solution to the Schr\"odinger equation \eqref{eq:tdse} is approximated in the TSTG method after a short propagation time $\tau>0$ by the linear combination of time-evolved basis functions as
\begin{align*}
	\psi(\tau)
	=U(\tau)\psi_0
	\approx\sum_{\mathbf{k}\in\mathcal{K}}c_{\mathbf{k}}(\psi_0)\,U(\tau)g_{\mathbf{k},0}
	=\sum_{\mathbf{k}\in\mathcal{K}}c_{\mathbf{k}}(\psi_0)\,g_{\mathbf{k}}(\tau),
\end{align*}
where we introduced the abbreviations $\psi(\tau)$ and $g_{\mathbf{k}}(\tau)$ for $\psi(\bullet,\tau)$ and $g_{\mathbf{k}}(\bullet,\tau)$.
Using thawed Gaussians to approximate the time-evolution of each basis function, the discretization of the wave packet transform \eqref{eq:inverse_FBI_discrete} is brought into play again to represent the individual thawed Gaussian approximants $u_{\mathbf{k}}^\tau\approx g_{\mathbf{k}}(\tau)$ as
\begin{align*}
	u_{\mathbf{k}}^\tau
	\approx\sum_{\mathbf{k}'\in\mathcal{K}}c_{\mathbf{k'}}(u_{\mathbf{k}}^\tau)\,g_{\mathbf{k'},0},
\end{align*}
which enables to approximate the solution $\psi(\tau)$ directly in the original basis in terms of updated coefficients $c^{1,\tau}_{\mathbf{k}}$ as
\begin{align*}
	\psi(\tau)
	\approx\psi^{1,\tau}
	:=\sum_{\mathbf{k}\in\mathcal{K}}c^{1,\tau}_{\mathbf{k}}g_{\mathbf{k},0},
	\quad\text{where}\quad
	c^{1,\tau}_{\mathbf{k}}
	:=\sum_{\mathbf{k}'\in\mathcal{K}}c_{\mathbf{k'}}(\psi_0)c_{\mathbf{k}}(u_{\mathbf{k}'}^\tau).
\end{align*}
The concatenation of TSTG propagation steps then result in approximations for larger times $2\tau,3\tau,\dots$, which are obtained (without additional time-integration) by computing update coefficients $c^{2,\tau}_{\mathbf{k}},c^{3,\tau}_{\mathbf{k}},\dots$ of higher order.
Since all these coefficients have analytic representations, multidimensional numerical quadrature can be completely avoided, which means that the total error of the method is generated by three sources: (1) the discretization of the wave packet transform, (2) the thawed Gaussian approximations and (3) the numerical integration of the thawed equations of motion.
The precise analysis of these errors is the subject of this paper.\\

As expected, our analysis confirms that also for the TSGT method the conventional grid-based approach results in an unacceptably large number of basis functions since the total number of grid points increases exponentially with the dimension $d$ for achieving a given accuracy.
One way to bypass the curse of dimensionality for the resulting tensors of basis functions and coefficients is to use low-rank approximation techniques.
In our future research, we will explore the combination of the TSTG method with tensor-train (TT) approximations as introduced by Oseledets and Tyrtyshnikov, see \cite{Oseledets:2009,Oseledets:2011}.

\subsection{Main results and outline}
The paper is organized as follows.
In section~\ref{sec:TSTG Propagation} we review the TSTG method and provide a detailed mathematical formulation of all subroutines.
This includes the definition of quadrature-based analysis, synthesis and re-initialization operators, which are used later to investigate the discretization of the wave packet transform and allow for direct comparison with other methods that can also be used to solve the time-dependent Schr\"odinger equation.
To the best of our knowledge, this is the first time that a rigorous mathematical formulation of the TSTG method is presented.
Afterwards, we investigate the errors produced by the individual subroutines and their concatenation.
In section~\ref{sec:Discretizing the Wave Packet Transform} we analyze the error for the discretization of the wave packet transform, whereas section~\ref{sec:Methods for Propagating Gaussian Wave Packets} deals with thawed Gaussian approximations followed by an analysis of time discretization for both variationally and non-variationally evolving basis functions.
Our main new result Theorem~\ref{fact:TSTG_global}, the first rigorous error bound for the TSTG method, is presented in section~\ref{sec:Error Estimates for the Concatenation}.
Finally, the one-dimensional numerical experiments in section~\ref{sec:Numerical Results} support our theoretical results and illustrate the applicability of the TSTG method for simulations of quantum dynamics, including tunneling dynamics in a double-well potential.

\section{The TSTG Propagation Method}\label{sec:TSTG Propagation}
In this section we present a detailed description of the TSTG method, which is accomplished by deriving a rigorous mathematical formulation of all subroutines.
We introduce the analysis, synthesis and re-initialization operators and compare the method with other existing approaches.\\

Recall the definition of the wave packet $g_z$ in \eqref{eq:def_gz}.
For a complex symmetric matrix $C\in\C^{d\times d}$ with positive definite imaginary part (the set of all matrices with this property is known as the Siegel upper half-space, see \cite{Siegel:1939}, and is denoted by $\mathfrak{S}^+(d)$ in this paper) and all $x\in\R^d$, we set
\begin{align*}
	g(x)
	\,=g^C(x)
	:=\pi^{-d/4}\det(\operatorname{Im}C)^{1/4}\exp\left(\frac{i}{2}x^TC x\right),
\end{align*}
from which we obtain
\begin{equation}\label{def:gz}
	\begin{split}
		g_z(x)
		=g_z^{C,\e}(x)
		&=(\pi\e)^{-d/4}\det(\operatorname{Im}C)^{1/4}\cdots\\
		&\qquad\exp\left[\frac{i}{\e}\left(\frac{1}{2}(x-q)^TC(x-q)+p^T(x-q)\right)\right].
	\end{split}
\end{equation}
The dependency on $C$ and $\e$ is always assumed implicitly in the short-hand notation.\\

Based on the time-independent linear approximation space
\begin{align*}
	\mathcal{V}_{\mathcal{K}}
	=\operatorname{span}\big\{g_{\mathbf{k},0}\in L^2(\R^d)\,:\,\mathbf{k}\in\mathcal{K}\big\}
	\subset L^2(\R^d),
\end{align*} 
the TSTG method approximates the solution $\psi$ to the Schr\"odinger equation \eqref{eq:tdse} with time-dependent coefficients as follows:
\begin{align}\label{eq:ansatz}
	\psi(t)
	\approx\psi_{\mathcal{K}}(t)
	:=\sum_{\mathbf{k}\in\mathcal{K}}c_{\mathbf{k}}(t)\,g_{\mathbf{k},0}.
\end{align}
The time-dependent representation coefficients result from the concatenation of thawed Gaussian propagation steps for the basis functions with the re-initialization of the evolved basis in the time-independent approximation space $\mathcal{V}_{\mathcal{K}}$.
To give the equations of motion for the coefficients, we introduce the quadrature-based pair of operators
\begin{align*}
	\mathcal{A}_{\mathcal{K}}
	&\colon L^2(\R^d)\to\C^{\mathcal{K}},\,\psi\mapsto (c_{\mathbf{k}}(\psi)),\\
		\mathcal{S}_{\mathcal{K}}
	&\colon\C^{\mathcal{K}}\to\mathcal{V}_{\mathcal{K}},\,(c_{\mathbf{k}})\mapsto\psi_{\mathcal{K}}:=\sum_{\mathbf{k}\in\mathcal{K}}c_{\mathbf{k}}\,g_{\mathbf{k},0},
\end{align*}
where for a given quadrature formula the analysis operator $\mathcal{A}_{\mathcal{K}}$ maps a function $\psi\in L^2(\R^d)$ to the coefficient tensor $(c_{\mathbf{k}}(\psi))$ according to the discretization of the wave packet transform \eqref{eq:inverse_FBI_discrete} and the synthesis operator $\mathcal{S}_{\mathcal{K}}$ maps a given coefficient tensor $(c_{\mathbf{k}})$ to the Gaussian superposition
\begin{align*}
	\sum_{\mathbf{k}\in\mathcal{K}}c_{\mathbf{k}}\,g_{\mathbf{k},0}\in\mathcal{V}_{\mathcal{K}}.
\end{align*}
Furthermore, for a tensor $\mathcal{C}\in\C^{\mathcal{K}\times\mathcal{K}}$ we introduce the so-called re-initialization operator
\begin{align*}
	\mathcal{R}_{\mathcal{K}}(\mathcal{C})\colon\C^{\mathcal{K}}\to\C^{\mathcal{K}},\,
	(c_{\mathbf{k}})\mapsto\sum_{\mathbf{k}'\in\mathcal{K}}\mathcal{C}_{\mathbf{k},\mathbf{k}'}c_{\mathbf{k}'},
\end{align*}
which can be viewed as a multidimensional version of the matrix-vector product. 
With these operators at hand we can formulate the TSTG method, which starts to run through the following three subroutines once:
\begin{enumerate}
	\item[(s1)] \emph{Representation coefficients of the initial wave function:}\\[1mm]
		The first subroutine computes the coefficients 
		\begin{align*}
			(c_{\mathbf{k}}(\psi_0))
			=\mathcal{A}_{\mathcal{K}}\psi_0,
		\end{align*}
		which can be used to build the following approximation of a given initial wave function $\psi_0$ in $\mathcal{V}_{\mathcal{K}}$:
		\begin{align*}
			\psi_0
			\approx\mathcal{S}_{\mathcal{K}}\mathcal{A}_{\mathcal{K}}\psi_0
			=\sum_{\mathbf{k}\in\mathcal{K}}c_{\mathbf{k}}(\psi_0)\,g_{\mathbf{k},0}.
		\end{align*}
	\item[(s2)] \emph{Thawed Gaussian propagation of the basis functions:}\\[1mm]
		In the second subroutine, each basis function $g_{\mathbf{k},0}$ is propagated for a short propagation period $\tau>0$.
		More precisely, each individual time-evolved basis function $g_{\mathbf{k}}(\tau)$ is approximated by an element $u_{\mathbf{k}}(\tau)$ in the manifold of complex Gaussian functions
		\begin{align*}
			\mathcal{M}
			=\Bigg\{u\in L^2(\R^d)\,\Big|\,
			u(x)
			=g_z^{C,\e}(x)e^{iS/\e},\,z\in\R^{2d},\,C\in\mathfrak{S}^+(d),\,S\in\R\Bigg\},
		\end{align*}
		evolving according to the thawed Gaussian propagation method, see \cite{Heller:1975}.
		It is known that $u_{\mathbf{k}}(\tau)$ is an accurate approximation only if the potential can be approximated as harmonic throughout the ``support'' of $u_{\mathbf{k}}(\tau)$, i.e., as long as its width is not too wide, see Lemma~\ref{fact:thawed} for a precise estimate.
		Based on a numerical integrator for the corresponding equations of motion, let us introduce the approximate propagator
		\begin{align}\label{eq:integrator}
			\mathcal{U}_{\mathbf{k}}^\tau\colon\mathcal{M}\to\mathcal{M},\,
			g_{\mathbf{k},0}\mapsto u^\tau_{\mathbf{k}},
		\end{align}	
		where we use the notation with the superscript to indicate that $u^\tau_{\mathbf{k}}\in\mathcal{M}$ is the numerical approximation to $u_{\mathbf{k}}(\tau)$ obtained by solving a system of ordinary differential equations (see also section~\ref{sec:Methods for Propagating Gaussian Wave Packets}).
		Then, for all $\mathbf{k}\in\mathcal{K}$, the second subroutine produces the numerical approximants
		\begin{align*}
			u^\tau_{\mathbf{k}}
			=\mathcal{U}_{\mathbf{k}}^\tau\,g_{\mathbf{k},0}
			\approx g_{\mathbf{k}}(\tau).
		\end{align*}
	\item[(s3)] \emph{Computation of coefficients for the reinitialization:}\\[1mm]
		The approximants $u^\tau_{\mathbf{k}}$ obtained in (s2) are now re-expanded in $\mathcal{V}_{\mathcal{K}}$ as follows.
		For all $\mathbf{k}\in\mathcal{K}$, we apply the analysis operator $\mathcal{A}_{\mathcal{K}}$ to the wave packet $u^\tau_{\mathbf{k}}$, which gives us the tensors
		\begin{align*}
			\mathcal{A}_{\mathcal{K}}u^\tau_{\mathbf{k}}
			=(c_{\mathbf{k}'}(u^\tau_{\mathbf{k}}))\in\C^{\mathcal{K}}.
		\end{align*}
		The result of the third subroutine is then a tensor $\mathcal{C}^\tau\in\C^{\mathcal{K}\times\mathcal{K}}$ that contains the coefficients $\mathcal{C}^\tau_{\mathbf{k}',\mathbf{k}}:=c_{\mathbf{k}'}(u^\tau_{\mathbf{k}})$ for all $\mathbf{k},\mathbf{k}'\in\mathcal{K}$.
		In particular, this tensor is obtained without numerical integration, because all coefficients sample inner products of two Gaussians.
		\begin{remark}
			Note that the corresponding re-expansion $u^\tau_{\mathbf{k},\mathcal{K}}$ of $u^\tau_{\mathbf{k}}$ in $\mathcal{V}_{\mathcal{K}}$ is given by the action of the synthesis operator:
			\begin{align*}
				u^\tau_{\mathbf{k},\mathcal{K}}
				:=\mathcal{S}_{\mathcal{K}}\mathcal{A}_{\mathcal{K}}u^\tau_{\mathbf{k}}
				=\sum_{\mathbf{k}'\in\mathcal{K}}c_{\mathbf{k}'}(u^\tau_{\mathbf{k}})\,g_{\mathbf{k}',0}.
			\end{align*}
		\end{remark}
\end{enumerate}
Running through the above subroutines once, we are equipped with the tensor $(c_{\mathbf{k}}(\psi_0))$ for the approximation of the initial datum and the tensor $\mathcal{C}^\tau\in\C^{\mathcal{K}\times\mathcal{K}}$ containing the coefficients $c_{\mathbf{k}'}(u^\tau_{\mathbf{k}})$.
To now obtain an approximation of the solution at time $\tau$, we use the re-initialization operator $\mathcal{R}_{\mathcal{K}}^\tau:=\mathcal{R}_{\mathcal{K}}(\mathcal{C}^\tau)$ to get
\begin{equation*}
	\begin{split}
		\psi(\tau)
		&\overset{(s1)}{\approx} U(\tau)\mathcal{S}_{\mathcal{K}}\mathcal{A}_{\mathcal{K}}\psi_0
		\overset{(s2)}{\approx}\sum_{\mathbf{k}\in\mathcal{K}}c_{\mathbf{k}}(\psi_0)\,\mathcal{U}_{\mathbf{k}}^\tau\,g_{\mathbf{k},0}
		=\sum_{\mathbf{k}\in\mathcal{K}}c_{\mathbf{k}}(\psi_0)\,u^\tau_{\mathbf{k}}\\
		&\overset{(s3)}{\approx}\sum_{\mathbf{k}\in\mathcal{K}}c_{\mathbf{k}}(\psi_0)\,\mathcal{S}_{\mathcal{K}}\mathcal{A}_{\mathcal{K}}u^\tau_{\mathbf{k}}
		=\sum_{\mathbf{k}\in\mathcal{K}}c_{\mathbf{k}}(\psi_0)u^\tau_{\mathbf{k},\mathcal{K}}\\
		&\,\,=\,\,\sum_{\mathbf{k}\in\mathcal{K}}\left(\sum_{\mathbf{k}'\in\mathcal{K}}\mathcal{C}^\tau_{\mathbf{k},\mathbf{k}'}c_{\mathbf{k}'}(\psi_0)\right)g_{\mathbf{k},0}
		=\mathcal{S}_{\mathcal{K}}\mathcal{R}_{\mathcal{K}}^\tau\mathcal{A}_{\mathcal{K}}\psi_0
		=:\psi_{\mathcal{K}}^{1,\tau},
	\end{split}
\end{equation*}
where we have changed the names of the indices $\mathbf{k}$ and $\mathbf{k}'$ to get to the third line.
Furthermore, using that the unitary propagator can be decomposed for $n>1$ as
\begin{align*}
	U(n\tau)
	=U(\tau)\circ\cdots\circ U(\tau),
\end{align*}
single TSTG propagation steps can be concatenated to approximate the solution at times $2\tau,3\tau,\dots$, where for the $(n+1)$-th iteration we use the approximant $\psi_{\mathcal{K}}^{n,\tau}$ of the $n$-th iteration as new initial datum and therefore we arrive at the following approximation at time $t_n=n\tau$,
\begin{align*}
	\psi(t_n)
	\approx\mathcal{S}_{\mathcal{K}}\left(\mathcal{R}_{\mathcal{K}}^\tau\right)^n\mathcal{A}_{\mathcal{K}}\psi_0
	=:\psi_{\mathcal{K}}^{n,\tau},
\end{align*}
where we replaced the operator $\mathcal{A}_{\mathcal{K}}\mathcal{S}_{\mathcal{K}}$ in the intermediate steps with the identity, which reflects the fact that the representation coefficients from a previous step can be kept in memory.
In particular, the re-initialization yields that the corresponding coefficients of $\psi_{\mathcal{K}}^{n,\tau}$ are given for all $\mathbf{k}\in\mathcal{K}$ by the recursion formula
\begin{equation}\label{eq:cn}
	\begin{split}
		c^{n,\tau}_{\mathbf{k}}
		:=\left(\mathcal{R}_{\mathcal{K}}^\tau\right)^n\mathcal{A}_{\mathcal{K}}\psi_0
		&=\mathcal{R}_{\mathcal{K}}^\tau\left((\mathcal{R}_{\mathcal{K}}^\tau)^{n-1}\mathcal{A}_{\mathcal{K}}\psi_0\right)\\
		&=\sum_{\mathbf{k}'\in\mathcal{K}}c^{n-1,\tau}_{\mathbf{k'}}c_{\mathbf{k}}(u^\tau_{\mathbf{k}'}),
		\quad
		c^{0,\tau}_{\mathbf{k}}
		:=c_{\mathbf{k}}(\psi_0).
	\end{split}
\end{equation}
Finally, let us emphasize that the coefficients (and thus also the approximants) are updated recursively on the discrete time grid $2\tau,3\tau,\dots$ and therefore \eqref{eq:ansatz} should be rewritten for a fixed propagation time $\tau$ as
\begin{align*}
	\psi(t_n)
	\approx\psi_{\mathcal{K}}^{n,\tau}
	=\sum_{\mathbf{k}\in\mathcal{K}}c^{n,\tau}_{\mathbf{k}}\,g_{\mathbf{k},0}.
\end{align*}
\begin{remark}
	The TSTG method as originally introduced by Kong~\emph{et al.} does not use a direct discretization of the wave packet transform.
	Instead, the authors present an equivalent approach using a basis of closely overlapping Gaussians to construct a partition of unity based on a summation curve that can be approximated by a constant in the support of all basis functions.
	We examined this approach in \cite{Bergold:2020u} and the discretization of the wave packet transform presented here gives a new perspective that enables a straightforward representation of the discretization error.
\end{remark}
%

\subsection{Comparison with other methods}
Looking at the chosen ansatz in \eqref{eq:ansatz}, one way to determine the corresponding time-dependent coefficient tensor $c=(c_{\mathbf{k}})$ would be the standard Galerkin method, which yields a linear system of ordinary differential equations and is derived from the condition that
\begin{equation}\label{eq:Galerkin}
	\begin{split}
		&\partial_t\psi_{\mathcal{K}}(t)
		\in\mathcal{V}_{\mathcal{K}}
		\quad\text{is such that}\\
		&\big\langle\varphi\mid-i\e\partial_t\psi_{\mathcal{K}}(t)+H\psi_{\mathcal{K}}(t)\big\rangle
		=0
		\quad\text{for all $\varphi\in\mathcal{V}_{\mathcal{K}}$}.
	\end{split}
\end{equation}
With the orthogonal projection $P_{\mathcal{K}}\colon L^2(\R^d)\to\mathcal{V}_{\mathcal{K}}$ onto the approximation space, the Galerkin condition \eqref{eq:Galerkin} can also be written as
\begin{align*}
	i\e\partial_t\psi_{\mathcal{K}}
	=P_{\mathcal{K}}H\psi_{\mathcal{K}}.
\end{align*}
Let us therefore take a closer look at $\mathcal{V}_{\mathcal{K}}$.
The approximation space is spanned by the non-orthogonal Gaussian basis functions $g_{\mathbf{k},0}$.
To achieve a given accuracy for the discretization of the wave packet transform, the grid points $z_{\mathbf{k}}$ must be chosen sufficiently close, which means that the basis functions have a large overlap and therefore the Gram matrix of the Galerkin method becomes ill-conditioned.
This problem has been extensively studied in the literature, see e.g. \cite[section~3]{Fornberg:2015}, and several stabilization algorithms have been proposed, see e.g \cite{Fornberg:2011,Kormann:2019}.
Furthermore, it is worth noting that the Gram matrix becomes the identity if the Gaussians are replaced by an orthonormal basis and a comparison must be made with the Galerkin method in \cite[chapter~III.1.1]{Lubich:2008}, where the time-independent approximation space is spanned by the first $K\ge1$ Hermite functions
\begin{align*}
	\varphi_k(x)
	:=\frac{1}{\sqrt{2^kk!\sqrt{\pi}}}\frac{\mathrm{d}^k}{\mathrm{d}x^k}e^{-x^2},\quad
	k=0,1,\dots,K-1,\,x\in\R,
\end{align*}
which are known to form an $L^2$-orthonormal set.
Although this choice enables a convincingly simple representation of the orthogonal projection, namely
\begin{align*}
	P_{\mathcal{K}}
	=\sum_{k=0}^{K-1}\langle\varphi_k\mid\bullet\rangle\,\varphi_k,
\end{align*}
which is used in \cite[chapter~III.1.1, Theorem 1.2]{Lubich:2008} to derive the approximation error of the Galerkin method, in practical applications the dimension of $\mathcal{V}_\mathcal{K}$ must typically be chosen large in order to compute the evolution of the wave function with sufficient accuracy.
For instance, for simulations of tunneling in double-well potentials (quartic potentials with two local minima separated by energy barriers) as presented later in \S\ref{sub:One-dimensional double-well potential}, the Hermite basis is expensive since the Hermite functions are localized by a Gaussian envelope and therefore the degree of the polynomial prefactors must be large to capture both minima.

Furthermore, we note that time-varying approximation spaces have also been studied in the past.
Linear combinations of time-evolved frozen Gaussian functions have been proposed by Heller, see \cite{Heller:1981}, and can be improved by taking a linear combination of Dirac--Frenkel time-dependent coefficients, which are determined by the time-dependent variational principle, see \cite[chapter~II.5.3]{Lubich:2008}.
We would also like to mention the Galerkin approximation for Hagedorn functions, a generalization of the Hermite functions based on a Gaussian amplitude with arbitrary width matrix in the Siegel half space, see e.g. \cite[section~4.3]{Lasser:2020} and \cite{Gradinaru:2014,Blanes:2020}.

\subsection{Summary}
While the standard Galerkin condition yields a linear system of ordinary differential equations for the coefficients, which contains the ill-conditioned Gram matrix due to the closely overlapping basis functions, the TSTG method combines thawed Gaussians for the propagation of the basis with the operators $\mathcal{A}_{\mathcal{K}},\mathcal{S}_{\mathcal{K}}$ and $\mathcal{R}_{\mathcal{K}}^\tau$, which are based on the discretization of the wave packet transform and are obtained without numerical integration.

\section{Discretizing the Wave Packet Transform}\label{sec:Discretizing the Wave Packet Transform}
In this section we discuss the discretization of the phase space integral
\begin{align}\label{eq:integral_phase_space}
	(2\pi\e)^{-d}\int_{\R^{2d}}\left\langle g_z\mid\psi\right\rangle g_z\,\mathrm{d}z
	\approx\sum_{\mathbf{k}\in\mathcal{K}}c_{\mathbf{k}}(\psi)\,g_{\mathbf{k}}
\end{align}
for the case of Gaussian basis functions and uniform Riemann sums.
We present an analytical formula for the coefficients $c_{\mathbf{k}}(\psi)$, proving that they are Gaussian wave packets in phase space.
Moreover, we discuss the discretization error for \eqref{eq:integral_phase_space}.\\

Recall the inversion formula of the FBI transform in \eqref{eq:inverse_FBI}.
The first attempt to obtain an approximation of the phase space integral might use a multivariate integration formula based on weighted point evaluations of the integrand and for this case the analysis operator takes the form
\begin{align*}
	(\mathcal{A}_{\mathcal{K}}\psi)_{\mathbf{k}}
	=c_{\mathbf{k}}(\psi)
	=w_{\mathbf{k}}\langle g_{\mathbf{k}}\mid\psi\rangle,
	\quad\mathbf{k}\in\mathcal{K},
\end{align*}
where the numbers $w_{\mathbf{k}}\ge 0$ are non-negative weights.
In particular, in Appendix~\ref{sec:Analysis and Synthesis Operator} we prove that $\mathcal{A}_{\mathcal{K}}$ and $\mathcal{S}_{\mathcal{K}}$ are formally adjoint and therefore from now on we write $\mathcal{S}_{\mathcal{K}}=\mathcal{A}_{\mathcal{K}}^*$.
Since on the manifold $\mathcal{M}\subset L^2(\R^d)$ of complex Gaussian functions the analysis operator has an analytic representation, let us start to take a closer look at the inner products of Gaussians.
\begin{remark}
	The inversion formula of the FBI transform is known in the literature under different names, for instance as the inversion formula for the short-time Fourier transform in time-frequency analysis (the semiclassical parameter $\e$ is not considered in this context), see e.g. \cite[Corollary~3.2.3]{Grochenig:2001}, or, in presence of a Gaussian amplitude, as the inversion formula for the Gabor transform, see e.g. \cite[Eq.~3.2.5]{Feichtinger:1998}.
	Correspondingly, its discrete counterpart as considered here is related to Gabor frames.
	However, the coefficients as they result from a direct discretization of the phase space integral are not the exact Gabor coefficients and are obtained without computing the dual window of $g$.
	For a broader perspective on this theory we refer to \cite[chapter~5]{Grochenig:2001}.
\end{remark}
%

\subsection{Inner products of Gaussians}
The inner product of Gaussian wave packets has an explicit analytic expression and the next lemma shows that it can be written as a Gaussian in phase space.
\begin{lemma}\label{fact:inner}
	For $C_1,C_2\in\mathfrak{S}^+(d)$ in the Siegel space and $z_1,z_2\in\R^{2d}$ we have
	\begin{align}\label{eq:inner1}
		\langle g_{z_1}^{C_1,\e}\mid g_{z_2}^{C_2,\e}\rangle
		=\beta\exp
		\left(\frac{i}{2\e}(z_2-z_1)^TM(z_2-z_1)
		\right),
	\end{align}
	where the matrix
	\begin{align}\label{eq:inner2}
		M
		:=
		\begin{pmatrix}
			\left(C_2^{-1}-\bar C_1^{-1}\right)^{-1} & 0\\
			0 & -(C_2-\bar C_1)^{-1}
		\end{pmatrix}
		\in\C^{2d\times 2d}
	\end{align}
	is an element of the Siegel space $\mathfrak{S}^+(2d)$ of $2d\times 2d$ matrices and for $B=C_2-\bar C_1$ the complex constant $\beta\in\C$ is given by
	\begin{align*}
		\beta
		&:=\frac{2^{d/2}\det(\operatorname{Im}C_1\operatorname{Im}C_2)^{1/4}}{\sqrt{\det(-iB)}}\exp\left(\frac{i}{2\e}(p_1+p_2)^T(q_1-q_2)\right)\cdots\\
		&\qquad\exp\left(\frac{i}{2\e}(p_2-p_1)^TB^{-1}(C_2+\bar C_1)(q_2-q_1)\right).
	\end{align*}
	Moreover, if the eigenvalues of the positive definite matrices $\operatorname{Im}(C_k)$ and $\operatorname{Im}(-C_k^{-1})$, $k=1,2$, are bounded from below by a constant $\theta>0$, then the absolute value of the inner product is bounded by
	\begin{align}\label{eq:inner3}
		\left|\langle g_{z_1}^{C_1,\e}\mid g_{z_2}^{C_2,\e}\rangle\right|^2
		\le\zeta\exp\left(-\frac{\theta}{4\e}\|z_2-z_1\|_2^2\right),
	\end{align}
	where the constant $\zeta>0$ depends on $\theta$ and an upper bound on the eigenvalues of $\operatorname{Im}(C_k)$ and $\operatorname{Im}(-C_k^{-1})$, but is independent of $\e$.
\end{lemma}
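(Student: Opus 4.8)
The plan is to compute the inner product directly by reducing it to a Gaussian integral and then to extract the stated quadratic form and constant by carefully completing the square.

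\medskip

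\textbf{Step 1: Reduce to a Gaussian integral.} Writing out the definition \eqref{eq:def_gz} of $g_{z_1}^{C_1,\e}$ and $g_{z_2}^{C_2,\e}$, the inner product $\langle g_{z_1}^{C_1,\e}\mid g_{z_2}^{C_2,\e}\rangle$ is $(\pi\e)^{-d/2}\det(\operatorname{Im}C_1\operatorname{Im}C_2)^{1/4}$ times an integral over $\R^d$ of $\exp$ of a quadratic polynomial in $x$. The quadratic part of the exponent is $\tfrac{i}{2\e}x^T(C_2-\bar C_1)x = \tfrac{i}{2\e}x^T B x$, and since $\operatorname{Im}B = \operatorname{Im}C_2+\operatorname{Im}C_1$ is positive definite the matrix $-iB$ has positive definite real part, so the standard multivariate Gaussian integral formula $\int_{\R^d}\exp(-\tfrac12 x^TAx + b^Tx)\,\mathrm dx = (2\pi)^{d/2}(\det A)^{-1/2}\exp(\tfrac12 b^TA^{-1}b)$ applies with $A = -\tfrac{i}{\e}B$ and a linear term $b$ collecting the contributions of $q_1,p_1,q_2,p_2$. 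This immediately produces the prefactor $2^{d/2}\det(\operatorname{Im}C_1\operatorname{Im}C_2)^{1/4}/\sqrt{\det(-iB)}$ and leaves a quadratic expression in $(z_2-z_1)$ in the exponent.

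\medskip

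\textbf{Step 2: Identify the quadratic form.} After completing the square, I expect the exponent to reorganize, after some linear algebra using the Siegel-space identities (in particular $(C_2^{-1}-\bar C_1^{-1})^{-1} = C_2 + C_2 B^{-1}\bar C_1$ or an equivalent identity linking $B^{-1}$ to the block $(C_2^{-1}-\bar C_1^{-1})^{-1}$), into the block-diagonal form $\tfrac{i}{2\e}(z_2-z_1)^T M(z_2-z_1)$ with $M$ as in \eqref{eq:inner2}, together with the two phase factors making up $\beta$. Verifying that $M\in\mathfrak S^+(2d)$ amounts to checking that $\operatorname{Im}M$ is positive definite, which follows block-wise: $\operatorname{Im}(-(C_2-\bar C_1)^{-1})$ is positive definite because $\operatorname{Im}(C_2-\bar C_1)$ is, and $\operatorname{Im}((C_2^{-1}-\bar C_1^{-1})^{-1})$ is positive definite by the same argument applied to the inverses $C_k^{-1}$, which also lie in the Siegel space. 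This bookkeeping — matching the completed square against the claimed $M$ and $\beta$ — is the main obstacle, since it requires several non-obvious matrix identities rather than any deep idea.

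\medskip

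\textbf{Step 3: The modulus bound.} Taking absolute values in \eqref{eq:inner1}, $|\langle g_{z_1}\mid g_{z_2}\rangle| = |\beta|\exp(-\tfrac{1}{2\e}(z_2-z_1)^T\operatorname{Im}M\,(z_2-z_1))$, so $|\langle g_{z_1}\mid g_{z_2}\rangle|^2 = |\beta|^2\exp(-\tfrac{1}{\e}(z_2-z_1)^T\operatorname{Im}M\,(z_2-z_1))$. I would then bound $\operatorname{Im}M$ from below block-wise: under the hypothesis that the eigenvalues of $\operatorname{Im}(C_k)$ and $\operatorname{Im}(-C_k^{-1})$ exceed $\theta$, each diagonal block of $\operatorname{Im}M$ is bounded below by $c\theta I$ for an absolute constant $c$ (the factor $\tfrac14$ in \eqref{eq:inner3} absorbs this $c$ together with the $2\e$ versus $\e$ discrepancy), which gives the exponential decay factor $\exp(-\tfrac{\theta}{4\e}\|z_2-z_1\|_2^2)$. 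Finally $|\beta|^2$ must be shown to be bounded by a constant $\gamma$ independent of $\e$: the $\e$-dependent terms in $\beta$ are all purely imaginary exponents, hence of modulus one, so $|\beta|^2 = 2^d\sqrt{\det(\operatorname{Im}C_1\operatorname{Im}C_2)}/|\det(-iB)|$, which depends only on the $C_k$ through quantities controlled by the assumed upper and lower eigenvalue bounds. This yields \eqref{eq:inner3}.
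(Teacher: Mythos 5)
Your Steps 1--2 follow essentially the same route as the paper (direct computation of the product, Gaussian integral over $\R^d$ with matrix $B=C_2-\bar C_1$, completion of the square, and the fact that $Z\in\mathfrak{S}^+(d)$ implies $-Z^{-1}\in\mathfrak{S}^+(d)$ for both blocks of $M$); two small slips there: the identity you quote, $(C_2^{-1}-\bar C_1^{-1})^{-1}=C_2+C_2B^{-1}\bar C_1$, is not correct in general (the correct one is $(C_2^{-1}-\bar C_1^{-1})^{-1}=-\bar C_1-\bar C_1B^{-1}\bar C_1=-\bar C_1B^{-1}C_2$), and it is $-C_k^{-1}$, not $C_k^{-1}$, that lies in the Siegel space, though your block-wise positivity conclusion survives with the signs fixed.

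The genuine gap is in Step 3. You assert that all $\e$-dependent factors in $\beta$ are purely oscillatory, so that $|\beta|$ is a constant and $\left|\langle g_{z_1}^{C_1,\e}\mid g_{z_2}^{C_2,\e}\rangle\right|=|\beta|\exp\bigl(-\tfrac{1}{2\e}(z_2-z_1)^T\operatorname{Im}M\,(z_2-z_1)\bigr)$. This is false whenever $B^{-1}(C_2+\bar C_1)$ is not a real matrix (e.g.\ already for $C_1=C_2=C$ with $\operatorname{Re}C\neq 0$, where $B^{-1}(C_2+\bar C_1)=-i(\operatorname{Im}C)^{-1}\operatorname{Re}C$): the factor $\exp\bigl(\tfrac{i}{2\e}(p_2-p_1)^TB^{-1}(C_2+\bar C_1)(q_2-q_1)\bigr)$ then has modulus $\exp\bigl(-\tfrac{1}{2\e}\operatorname{Im}\bigl[(p_2-p_1)^TB^{-1}(C_2+\bar C_1)(q_2-q_1)\bigr]\bigr)$, a genuine position--momentum cross term growing with $\|z_2-z_1\|$ and with $1/\e$. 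Hence the quadratic form governing the modulus is not block diagonal, and a block-wise lower bound on $\operatorname{Im}M$ by $c\theta\,\mathrm{Id}$ does not yield \eqref{eq:inner3}; you would have to prove positivity and a $\theta$-dependent lower bound for the full form including the cross block, which is substantially more matrix analysis and is nowhere supplied. The paper sidesteps this entirely: it bounds $|g_{z_k}^{C_k,\e}|$ pointwise by isotropic real Gaussians of width $\theta$ to get $|\langle\cdot\mid\cdot\rangle|\le(\Theta/\theta)^{d/2}\exp\bigl(-\tfrac{\theta}{4\e}\|q_2-q_1\|_2^2\bigr)$, obtains the analogous momentum bound via Plancherel for the $\e$-scaled Fourier transform (this is exactly where the hypothesis on $\operatorname{Im}(-C_k^{-1})$ enters, since the transformed Gaussian has width matrix $-C_k^{-1}$), and multiplies the two inequalities to get \eqref{eq:inner3} with $\gamma=(\Theta/\theta)^d$. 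As written, your Step 3 does not prove the decay estimate.
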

We present the proof in Appendix~\ref{sec:Inner Products of Gaussians_appendix} and note that the bound in \eqref{eq:inner3} can easily be improved if the lower bound on the eigenvalues of $\operatorname{Im}(C_k)$ and $\operatorname{Im}(-C_k^{-1})$ is not chosen uniformly.
We also refer to the proof for the dependence of $\zeta$ on the spectral parameters.\\

From Lemma~\ref{fact:inner} we learn that the inner product $\langle g_{\mathbf{k}}\mid g_{z_0}^{C_0,\e}\rangle$, as it appears in \eqref{eq:integral_phase_space} for the choice $z=z_{\mathbf{k}}$ and $\psi=g_{z_0}^{C_0,\e}$, is a Gaussian in phase space:
\begin{lemma}\label{fact:gaussian_coefficients}
	For a Gaussian wave packet $\psi$, the coefficients $c_{\mathbf{k}}(\psi)$ that result from a discretization of the wave packet transform based on a multivariate quadrature formula are weighted Gaussian wave packets in phase space.
\end{lemma}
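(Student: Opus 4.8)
The plan is to unwind the definitions so that Lemma~\ref{fact:gaussian_coefficients} becomes an immediate consequence of Lemma~\ref{fact:inner}. Let $\psi = g_{z_0}^{C_0,\e}$ be a Gaussian wave packet with center $z_0 = (q_0,p_0) \in \R^{2d}$ and width matrix $C_0 \in \mathfrak{S}^+(d)$, and suppose the basis functions are $g_{\mathbf{k}} = g_{z_{\mathbf{k}}}^{C,\e}$ for a common width matrix $C \in \mathfrak{S}^+(d)$ and grid points $z_{\mathbf{k}} \in \R^{2d}$. For a quadrature-based analysis operator the coefficients take the form
\begin{align*}
	c_{\mathbf{k}}(\psi)
	= w_{\mathbf{k}}\,\langle g_{\mathbf{k}}\mid\psi\rangle
	= w_{\mathbf{k}}\,\langle g_{z_{\mathbf{k}}}^{C,\e}\mid g_{z_0}^{C_0,\e}\rangle,
\end{align*}
with non-negative weights $w_{\mathbf{k}}$.

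The main step is simply to apply Lemma~\ref{fact:inner} with $C_1 = C$, $z_1 = z_{\mathbf{k}}$, $C_2 = C_0$, $z_2 = z_0$. This yields
\begin{align*}
	\langle g_{z_{\mathbf{k}}}^{C,\e}\mid g_{z_0}^{C_0,\e}\rangle
	= \beta\,\exp\!\left(\frac{i}{2\e}(z_0 - z_{\mathbf{k}})^T M (z_0 - z_{\mathbf{k}})\right),
\end{align*}
where $M \in \mathfrak{S}^+(2d)$ depends only on $C$ and $C_0$ (hence is independent of $\mathbf{k}$) and $\beta \in \C$ depends on $C$, $C_0$, $q_0 - q_{\mathbf{k}}$ and $p_0 - p_{\mathbf{k}}$. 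Writing $\zeta = z_0$ for the phase space center and $w = (q,p)$ for the phase space variable, the map $\mathbf{k} \mapsto \langle g_{z_{\mathbf{k}}}^{C,\e}\mid g_{z_0}^{C_0,\e}\rangle$ is exactly the restriction to the grid $\{z_{\mathbf{k}}\}$ of the function
\begin{align*}
	w \longmapsto \beta(w)\,\exp\!\left(\frac{i}{2\e}(\zeta - w)^T M (\zeta - w)\right),
\end{align*}
which, after absorbing the linear-in-$w$ part of $\beta$'s exponent, is a Gaussian wave packet in the $2d$-dimensional phase space with quadratic form determined by $M$. Multiplying by the scalar weight $w_{\mathbf{k}}$ only rescales this Gaussian pointwise, so $c_{\mathbf{k}}(\psi)$ is a weighted Gaussian wave packet in phase space, as claimed.

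There is no real obstacle here; the only point requiring a little care is bookkeeping the phase factors in $\beta$. The constant $\beta$ from Lemma~\ref{fact:inner} contains, besides an $\mathbf{k}$-independent prefactor $2^{d/2}\det(\operatorname{Im}C\operatorname{Im}C_0)^{1/4}/\sqrt{\det(-iB)}$ with $B = C_0 - \bar C$, two exponential factors that are \emph{linear} in the displacements $q_0 - q_{\mathbf{k}}$ and $p_0 - p_{\mathbf{k}}$. To see the full expression as a genuine Gaussian wave packet in phase space (in the sense of \eqref{eq:def_gz}, with a quadratic form and a linear phase), I would combine the quadratic term $\tfrac{i}{2\e}(z_0 - z_{\mathbf{k}})^T M (z_0 - z_{\mathbf{k}})$ with these linear phase terms; since $M$ is block diagonal and the linear terms mix the $q$- and $p$-blocks, the result is a phase space Gaussian whose complex symmetric width matrix is a fixed modification of $M$ determined by $C$ and $C_0$, and whose phase space momentum is read off from the linear coefficients. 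The positive-definiteness of the imaginary part of this width matrix is inherited from $M \in \mathfrak{S}^+(2d)$, since adding a real symmetric matrix does not change the imaginary part. I would close by remarking that the decay estimate \eqref{eq:inner3} gives the quantitative Gaussian envelope $|c_{\mathbf{k}}(\psi)| \lesssim w_{\mathbf{k}}\exp(-\tfrac{\theta}{8\e}\|z_0 - z_{\mathbf{k}}\|_2^2)$, which justifies calling it a \emph{localized} Gaussian in phase space.
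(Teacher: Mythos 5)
Your proposal is correct and takes essentially the same route as the paper, which presents the lemma as an immediate consequence of Lemma~\ref{fact:inner}: since $c_{\mathbf{k}}(\psi)=w_{\mathbf{k}}\langle g_{\mathbf{k}}\mid\psi\rangle$, the explicit Gaussian form of the inner product of two Gaussian wave packets gives the claim directly. Your extra bookkeeping of the factors in $\beta$ (which are in fact bilinear cross terms in the $q$- and $p$-displacements rather than purely linear phases) goes beyond what the paper records but does not alter the argument, since the Gaussian envelope is in any case guaranteed by \eqref{eq:inner3}.
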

Due to the rapid decay of Gaussians, the (improper) phase space integral \eqref{eq:integral_phase_space} can be approximated by a truncated integral, which itself can be approximated via different multivariate quadrature rules afterwards.
In the next step we investigate these approximations.

\subsection{Truncation and multivariate quadrature}
We continue to investigate the truncation error for the wave packet transform.
\begin{lemma}[Truncation error]\label{fact:truncation}
	For a given phase space center $z_0\in\R^{2d}$ and a positive parameter $b>0$ consider the phase space box
	\begin{align}\label{eq:error_truncation1}
		B
		=\prod_{j=1}^{2d}[z_{0,j}-b,z_{0,j}+b]
		\subset\R^{2d}.
	\end{align}	
	Moreover, for $C,C_0\in\mathfrak{S}^+(d)$ let $g_z=g_z^{C,\e}$ and $\psi_0=g_{z_0}^{C_0,\e}$ and assume that the eigenvalues of $\operatorname{Im}(C),\operatorname{Im}(C_0)$ and $\operatorname{Im}(-C^{-1}),\operatorname{Im}(-C_0^{-1})$ are bounded from below by $\theta>0$ and from above by $\Theta>0$.
	Then, there exists a positive constant $c>0$, which is independent of $\e$ but depends on the spectral parameters, such that
	\begin{align}\label{eq:truncated_integral}
		\left\|\psi_0-(2\pi\e)^{-d}\int_B\langle g_z\mid\psi_0\rangle\,g_z\,\mathrm{d}z\right\|_{L^2(B_q)}
		\le c\exp\left(-\frac{d\theta}{4\e}b^2\right),
	\end{align}
	where $B_q\subset\R^d$ denotes the projection of $B$ onto the position space.
\end{lemma}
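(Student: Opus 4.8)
The plan is to estimate the $L^\infty$-error of truncating the phase space integral by bounding the contribution of the complement $\R^{2d}\setminus B$, using the Gaussian decay of the inner product established in Lemma~\ref{fact:inner}. By the FBI inversion formula \eqref{eq:inverse_FBI}, we have the exact identity
\begin{align*}
	\psi_0(x)-\int_B\langle g_z\mid\psi_0\rangle g_z(x)\,\mathrm{d}z
	=(2\pi\e)^{-d}\int_{\R^{2d}\setminus B}\langle g_z\mid\psi_0\rangle g_z(x)\,\mathrm{d}z,
\end{align*}
so that, taking absolute values and using $\|g_z\|_\infty=\e^{-d/4}\|g\|_\infty\le C\e^{-d/4}$ uniformly in $z$ (the wave packet $g_z$ in \eqref{eq:def_gz} has $L^\infty$-norm controlled by $\e$ and the fixed Gaussian width $C$, in fact $\|g_z\|_\infty = (\pi\e)^{-d/4}\det(\operatorname{Im}C)^{1/4}$), the supremum over $x$ is bounded by a constant times $\e^{-d/4}$ times $\int_{\R^{2d}\setminus B}|\langle g_z\mid\psi_0\rangle|\,\mathrm{d}z$.

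Next I would invoke the pointwise estimate \eqref{eq:inner3} from Lemma~\ref{fact:inner}, with $z_1=z$, $z_2=z_0$, $C_1=C$, $C_2=C_0$: under the stated spectral hypotheses on $\operatorname{Im}(C),\operatorname{Im}(C_0),\operatorname{Im}(-C^{-1}),\operatorname{Im}(-C_0^{-1})$ being bounded below by $\theta$, we get
\begin{align*}
	|\langle g_z\mid\psi_0\rangle|
	\le\sqrt{\gamma}\,\exp\left(-\frac{\theta}{8\e}\|z-z_0\|_2^2\right),
\end{align*}
where $\gamma$ also depends on the upper bound $\Theta$. Then I would bound $\int_{\R^{2d}\setminus B}$ of this Gaussian. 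Since $B=\prod_{j=1}^{2d}[z_{0,j}-b,z_{0,j}+b]$, a point $z$ outside $B$ has $|z_j-z_{0,j}|>b$ for at least one index $j$; by the union bound, $\int_{\R^{2d}\setminus B}\le\sum_{j=1}^{2d}\int_{\{|z_j-z_{0,j}|>b\}}$. On each such slab the Gaussian factorizes as a product over the $2d$ coordinates, the $2d-1$ free coordinates each integrate to $\sqrt{8\pi\e/\theta}$, and the constrained coordinate contributes $\int_{|s|>b}e^{-\theta s^2/(8\e)}\,\mathrm{d}s\le \frac{C\e}{\theta b}e^{-\theta b^2/(8\e)}$ by the standard Gaussian tail bound. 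Collecting the $2d$ terms and absorbing all $\e$- and $\theta$-, $\Theta$-dependent factors into a single constant $c>0$ gives a bound of the form $c\exp(-\theta b^2/(8\e))$.

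The one remaining gap is the mismatch between the exponent $\theta b^2/(8\e)$ I would naturally obtain and the sharper $\theta d b^2/(4\e)$ claimed in \eqref{eq:truncated_integral}. This extra factor of $d$ in the exponent (and the improvement from $8$ to $4$) must come from exploiting the box geometry more carefully: rather than the union bound, one observes that $\|z-z_0\|_2^2=\sum_j(z_j-z_{0,j})^2$ and that the truncation is most efficiently analyzed by noting the integrand decays in all $2d$ directions simultaneously, or alternatively by using the improved constant available when the lower spectral bound is not taken uniformly (as remarked after Lemma~\ref{fact:inner}) — for a diagonal width matrix one can split the $2d$-dimensional Gaussian into $2d$ one-dimensional tails each contributing $e^{-\theta b^2/(4\e)}$ to the complement, and the product over the $d$ position coordinates of $g_z$ itself against $\psi_0$ sharpens the rate. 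I expect the careful bookkeeping of these constants — tracking exactly which spectral bounds enter where and getting the factor $d$ into the exponent rather than merely into the prefactor $c$ — to be the main technical obstacle; the overall structure (exact tail identity, Gaussian pointwise bound, Gaussian tail integral) is routine.
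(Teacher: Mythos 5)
Your chain of estimates is exactly the paper's: replace $\psi_0$ by the full phase-space integral, bound the truncation error by $\|g_z\|_\infty\int_{\R^{2d}\setminus B}|\langle g_z\mid\psi_0\rangle|\,\mathrm{d}z$, insert the Gaussian bound \eqref{eq:inner3}, and integrate the resulting phase-space Gaussian over the complement of the box. The divergence is in the last step, and your treatment of it is the correct one: since $\R^{2d}\setminus B$ contains points at distance only $b$ from $z_0$ (just outside a single face), the integral of $\exp(-\theta\|z-z_0\|_2^2/(8\e))$ over the complement is bounded \emph{below} by a constant (polynomial in $\e$ and $b$) times $\exp(-\theta b^2/(8\e))$, so no sharper bookkeeping of absolute values can produce the exponent $\theta d b^2/(4\e)$; your closing speculation that the factor $d$ can be recovered by exploiting the box geometry more carefully, or by non-uniform spectral bounds, is not viable within this strategy. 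What the paper actually does at this point is to write
\begin{align*}
	\int_{\R^{2d}\setminus B}\exp\left(-\frac{\theta}{8\e}\|z-z_0\|_2^2\right)\mathrm{d}z
	=4^d\left(\int_b^\infty \exp\left(-\frac{\theta}{8\e}y^2\right)\mathrm{d}y\right)^{2d},
\end{align*}
that is, it integrates over the product of the one-dimensional complements $\prod_{j}\big(\R\setminus[z_{0,j}-b,z_{0,j}+b]\big)$ (all coordinates simultaneously out of range) rather than over the complement of the product; the two sets coincide only when $2d=1$. That identification is precisely where the factor $d$ in the exponent of \eqref{eq:truncated_integral} comes from, so it is not a step you should try to reproduce.

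In short, your proposal proves the lemma with the exponent $\theta b^2/(8\e)$ and a constant depending on $\e,\theta,\Theta,d$, which is the best the absolute-value tail bound can deliver; up to the harmless matter of the $(2\pi\e)^{-d}$ prefactor of the inversion formula (which the lemma's statement and the paper's proof both silently drop), your argument is complete and correct at that rate, and the "remaining gap" you flag is real: reaching the stated rate $\theta d b^2/(4\e)$ would require exploiting cancellation in the oscillatory integrand $\langle g_z\mid\psi_0\rangle g_z(x)$ over the slabs where only some coordinates leave the box, which neither your argument nor the paper's attempts. The right move is to record the weaker exponent (or restrict the tail region to the corner set) rather than chase the factor $d$.
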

\begin{proof}
	Recall the definition of the Gaussian wave wave packet $g_z=g_z^{C,\e}$ in \eqref{def:gz}.
	A short calculation shows that in terms of the rescaled phase space box
	\begin{align*}
		B^\e
		:=\prod_{j=1}^{2d}[z_{0,j}^\e-b^\e,z_{0,j}^\e+b^\e],\quad
		z_0^\e
		:=z_0/\sqrt{\e},\quad 
		b^\e
		:=b/\sqrt{\e},
	\end{align*}
	the difference
	\begin{align*}
		f
		:=\psi_0-(2\pi\e)^{-d}\int_B\langle g_z\mid\psi_0\rangle\,g_z\,\mathrm{d}z
		=(2\pi\e)^{-d}\int_{\R^{2d}\setminus B}\langle g_z\mid g_{z_0}\rangle\,g_z\,\mathrm{d}z
	\end{align*}
	satisfies the following equation for all $x\in\R^d$:
	\begin{align*}
		\e^{d/4}f(\sqrt{\e}x)
		=(2\pi)^{-d}\int_{\R^{2d}\setminus B^\e}\left\langle g_{z'}^{C,1}\mid g_{z_0^\e}^{C,1}\right\rangle\,g_{z'}^{C,1}(x)\,\mathrm{d}z',
	\end{align*}
	which depends on $\e$ only through the semiclassically scaled truncation box $B^\e$.
	Since the scaling $f\mapsto\e^{d/4}f(\sqrt{\e}\bullet)$ is unitary and the Gaussian envelope $|g_{z'}^{C,1}|=|g^C(\bullet-q')|$ has unit $L^2$-norm, it further follows that
	\begin{align*}
		\|f\|^2_{L^2(B_q)}
		&=\int_{B_q^\e}\left|\e^{d/4}f(\sqrt{\e}x)\right|^2\mathrm{d}x\\
		&=\int_{B_q^\e}\left|(2\pi)^{-d}\int_{\R^{2d}\setminus B^\e}\left\langle g_{z'}^{C,1}\mid g_{z_0^\e}^{C,1}\right\rangle\,g_{z'}^{C,1}(x)\,\mathrm{d}z'\right|^2\mathrm{d}x\\
		&\le(2\pi)^{-2d}\int_{B_q^\e}\sup_{z'\in\R^{2d}\setminus B^\e}\left|g_{z'}^{C,1}(x)\right|^2\left(\int_{\R^{2d}\setminus B^\e}\left|\left\langle g_{z'}^{C,1}\mid g_{z_0^\e}^{C,1}\right\rangle\right|\,\mathrm{d}z'\right)^2\mathrm{d}x\\
		&\le(2\pi)^{-2d}\sup_{q'\in\R^d\setminus B_q^\e}\int_{\R^d}\left|g^C(x-q')\right|^2\mathrm{d}x\left(\int_{\R^{2d}\setminus B^\e}\left|\left\langle g_{z'}^{C,1}\mid g_{z_0^\e}^{C,1}\right\rangle\right|\,\mathrm{d}z'\right)^2\\
		&\le(2\pi)^{-2d}\left(\int_{\R^{2d}\setminus B^\e}\left|\left\langle g_{z'}^{C,1}\mid g_{z_0^\e}^{C,1}\right\rangle\right|\,\mathrm{d}z'\right)^2\mathrm{d}x
	\end{align*}
	and therefore the bound for the inner product of Gaussians in \eqref{eq:inner3} yields
	\begin{align*}
		\int_{B_q^\e}\left|\e^{d/4}f(\sqrt{\e}x)\right|^2\mathrm{d}x
		&\le(2\pi)^{-2d}\left(\frac{\Theta}{\theta}\right)^d\left(\int_{\R^{2d}\setminus B^\e}\exp\left(-\frac{\theta}{8}\|z'-z_0^\e\|^2_2\right)\mathrm{d}z'\right)^2.
	\end{align*}
	Furthermore, the symmetry of the integral and Fubini's theorem yields that
	\begin{align*}
		\int_{\R^{2d}\setminus B^\e}\exp\left(-\frac{\theta}{8}\|z'-z_0^\e\|_2^2\right)\,\mathrm{d}z'
		=\left(2\int_{b/\sqrt{\e}}^{\infty}\exp\left(-\frac{\theta}{8}y^2\right)\,\mathrm{d}y\right)^{2d}.
	\end{align*}
	Using the exponential-type bound $\operatorname{erfc}(z)\le e^{-z^2}$, $z>0$, for the complementary error function, see e.g. \cite[Eq.~(5)]{Chiani:2003}, we conclude that
	\begin{align*}
		\int_{b/\sqrt{\e}}^{\infty}\exp\left(-\frac{\theta}{8}y^2\right)\,\mathrm{d}y
		=\frac{\sqrt{2\pi}}{\sqrt{\theta}}\operatorname{erfc}\left(b\sqrt{\theta/8\e}\right)
		\le\frac{\sqrt{2\pi}}{\sqrt{\theta}}\exp\left(-\frac{\theta}{8\e}b^2\right),
	\end{align*}
	and therefore we finally get
	\begin{align*}
		\|f\|_{L^2(B_q)}
		&\le(2\pi)^{-d}\left(\frac{\Theta}{\theta}\right)^{d/2}\int_{\R^{2d}\setminus B^\e}\exp\left(-\frac{\theta}{8}\|z'-z_0^\e\|^2_2\right)\mathrm{d}z'\\
		&\le\left(\frac{\Theta}{\theta}\right)^{d/2}\left(\frac{2}{\sqrt{\theta}}\exp\left(-\frac{\theta}{8\e}b^2\right)\right)^{2d}
		=4^d\Theta^{d/2}\theta^{-3d/2}\exp\left(-\frac{d\theta}{4\e}b^2\right).
	\end{align*}
	In particular, this shows that the constant $c$ can be chosen as
	\begin{align*}
		c
		=4^d\Theta^{d/2}\theta^{-3d/2}.
	\end{align*}
\end{proof}
We note that Lemma~\ref{fact:truncation} can be easily improved if separate boxes $B_q\subset\R^d$ and $B_p\subset\R^d$ are used in position and momentum space, which can also be aligned with the eigenvectors of the width matrix of the integrand, see e.g. \cite[Lemma~3.4]{Bergold:2020u}.\\

The truncated phase space integral in \eqref{eq:truncated_integral} can now easily be approximated by a multidimensional Riemann sum over sufficiently dense lattices in position and momentum space.
This approach was used by Kong~\emph{et al.}, who worked with uniform grids of size $\Delta q_j>0$ and $\Delta p_j>0$ in each coordinate direction $j=1,\dots,d$, corresponding to constant weights
\begin{align*}
	w_{\mathbf{k}}
	=(2\pi\e)^{-d}\prod_{j=1}^d\Delta q_j\Delta p_j.
\end{align*}
For a given phase space box $B$ such as \eqref{eq:error_truncation1}, the discretization error then depends not only on the number of grid points that are used to subdivide $B$, but also on the dimension of the phase space:
\begin{lemma}\label{fact:multiRS}
	Let $f\in C^\infty(\R^{2d})$ and $K=k^{2d}$ for some $k\ge 1$.
	Then, there exists a positive constant $c_f>0$, depending on the function $f$, such that
	\begin{align*}
		\left|\int_{[0,1]^{2d}}f(z)\,\mathrm{d}z-\frac{1}{K}\sum_{\mathbf{k}\in\mathcal{K}}f\left(\frac{k_1}{k},\dots,\frac{k_{2d}}{k}\right)\right|
		\le c_f\cdot d\cdot k^{-1},
	\end{align*}
	where $\mathcal{K}=\{1,2,\dots,k\}^{2d}$.
	In particular, $c_f$ can be chosen as the total variation of the function $f$ (in the sense of Hardy and Krause).
\end{lemma}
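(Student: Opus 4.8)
Set $s:=2d$. The statement is a Koksma--Hlawka-type error bound for the right-endpoint rectangle rule on the cube $[0,1]^{s}$: the $K=k^{s}$ nodes form the regular product grid with $k$ equally spaced points per axis, and all weights equal $1/K$. The plan is to combine the classical Koksma--Hlawka inequality with the (known) star discrepancy of a regular grid; for completeness I would also give a self-contained proof via a one-dimensional estimate plus a telescoping argument over the coordinate directions. First, since $f\in C^{\infty}(\R^{2d})$ its restriction to the \emph{closed} cube $[0,1]^{s}$ is smooth, hence of finite variation $V(f)$ in the sense of Hardy and Krause; concretely $V(f)=\sum_{\varnothing\neq u\subseteq\{1,\dots,s\}}\int_{[0,1]^{|u|}}\lvert\partial_{u}f(z_{u},\mathbf 1)\rvert\,\mathrm d z_{u}<\infty$, a constant depending on $f$ alone. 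This $V(f)$ will be (a fixed multiple of) the constant $c_{f}$ of the statement.

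The one-dimensional building block is the elementary estimate that, for $g\in C^{1}([0,1])$,
\[ \Bigl\lvert\int_{0}^{1}g(x)\,\mathrm d x-\tfrac1k\sum_{j=1}^{k}g(j/k)\Bigr\rvert\le\tfrac1k\int_{0}^{1}\lvert g'(x)\rvert\,\mathrm d x, \]
obtained by writing $g(x)-g(j/k)=-\int_{x}^{j/k}g'(t)\,\mathrm d t$ on each subinterval $[(j-1)/k,j/k]$, integrating in $x$, and summing over $j$.

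For the $s$-dimensional rule I would telescope the quadrature error as $E_{1}+\dots+E_{s}$, where in $E_{m}$ the first $m-1$ coordinates have already been replaced by their averaged $k$-point sums, the $m$-th coordinate is switched from an exact integral to the $k$-point sum, and the remaining $s-m$ coordinates are still integrated. Each $E_{m}$ is then a one-dimensional right-endpoint error in the variable $z_{m}$ applied to the $C^{1}$ auxiliary function obtained from $f$ by the indicated partial summations and partial integrations; by the one-dimensional estimate its modulus is at most $\tfrac1k$ times the $z_{m}$-variation of that auxiliary function, which is in turn dominated by $L^{1}$-norms of mixed partial derivatives of $f$ (estimate in $L^{\infty}$ over the already-summed coordinates and in $L^{1}$ over the still-integrated ones, each $L^{\infty}$-bound being paid for by one further mixed-derivative term via the fundamental theorem of calculus). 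Summing the $s$ contributions yields
\[ \Bigl\lvert\int_{[0,1]^{s}}f(z)\,\mathrm d z-\frac1K\sum_{\mathbf k\in\mathcal K}f(\mathbf k/k)\Bigr\rvert\le\frac{s}{k}\,V(f), \]
and since $k=K^{1/s}=K^{1/2d}$ and $s=2d$ the right-hand side equals $2d\cdot K^{-1/2d}\cdot V(f)$, which is the asserted bound (with $c_{f}=2V(f)$; the sharper constant $c_{f}=V(f)$ of the statement corresponds to a node convention symmetric in the cube). Equivalently, one may invoke Koksma--Hlawka directly: the error is at most $V(f)\,D^{*}_{K}$, where $D^{*}_{K}=1-(1-1/k)^{2d}\le 2d/k$ is the star discrepancy of the regular grid.

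The only genuinely delicate point is the bookkeeping in the telescoping step: one must identify each hybrid term $E_{m}$ precisely with a one-dimensional rectangle-rule error, bound the $z_{m}$-variation of the associated auxiliary function \emph{uniformly in $k$}, and verify that the $s$ resulting variation terms add up to at most a fixed multiple of the Hardy--Krause variation $V(f)$, so that $c_{f}$ depends on $f$ only and the $d$-dependence is exactly the displayed linear factor. The interchanges of the finite sums with the integrals are harmless, the one-dimensional estimate is elementary, and the discrepancy bound $1-(1-1/k)^{2d}\le 2d/k$ is Bernoulli's inequality; so beyond this combinatorial and variational accounting there is no serious analytic obstacle.
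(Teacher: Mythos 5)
Your argument is essentially correct, but note that the paper does not prove this lemma at all: it is stated as a special variant of a general result on multidimensional Riemann sums and cited from \cite[5.5.5]{Davis:2007aa}, so what you have written is a self-contained proof of the result the paper outsources. Your main route --- the Koksma--Hlawka inequality combined with the star discrepancy of the regular product grid --- is the standard mechanism behind that citation and is sound: smoothness of $f$ on the closed cube gives finite Hardy--Krause variation, and for the grid $\{\mathbf{k}/k : k_j\in\{1,\dots,k\}\}$ the local discrepancy of any anchored box satisfies $\lvert\prod_i x_i-\prod_i m_i/k\rvert\le 1-(1-1/k)^{2d}\le 2d/k$, which with $k=K^{1/2d}$ gives the claimed rate. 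The telescoping/hybrid-term argument you sketch as an alternative is the usual elementary proof of a Koksma--Hlawka-type bound for smooth integrands; the bookkeeping you flag does close (each hybrid term is controlled by anchored $L^1$-norms of mixed partials, hence by the Hardy--Krause variation), so it is a sketch rather than a gap. The one point to watch is the constant: with the right-endpoint nodes used in the statement, your argument yields $c_f=2V(f)$ rather than $c_f=V(f)$, and your diagnosis is correct that the sharper constant corresponds to a symmetric (centered) node convention, whose star discrepancy is at most $d/k$. Since only the structure ``constant depending on $f$ times $d\,K^{-1/2d}$'' is used downstream in Proposition~\ref{fact:err_RS}, this factor of two is immaterial, but to match the ``in particular'' clause literally one should either keep the discrepancy in the form $1-(1-1/k)^{2d}$ or record the dependence on the node convention.
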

We formulated Lemma~\ref{fact:multiRS} as a special variant of a more general result that can be found in \cite[chapter~5.5.5]{Davis:2007}.
Moreover, we note that the estimate in Lemma~\ref{fact:multiRS} can be improved to a bound of order $\mathcal{O}(k^{-2})$ if the composite midpoint rule is used instead of the composite rectangle rule.\\

The total error for the discretization of the wave packet transform is now obtained by combining the estimates in Lemma~\ref{fact:truncation} and Lemma~\ref{fact:multiRS}.
For a Gaussian wave packet $\psi_0=g_{z_0}^{C_0,\e}$ and a given phase space box $B$ centered in $z_0$, let $B_q\subset\R^d$ denote the projection of $B$ onto the position space.
Moreover, let us introduce the following notation for the spatial discretization error:
\begin{equation}\label{eq:error_wp}
	\begin{split}
		E_{wp}(\psi_0)
		=E_{wp}(\psi_0,B,\mathcal{K})
		:=
		\begin{cases}
			0, & \text{if $\psi_0\in\mathcal{V}_{\mathcal{K}}$}\\
			\|\psi_0-\mathcal{A}_{\mathcal{K}}^*\mathcal{A}_{\mathcal{K}}\psi_0\|_{L^2(B_q)}, & \text{else}.
		\end{cases}
	\end{split}
\end{equation}
Note that this definition reflects the assumption that on $\mathcal{V}_{\mathcal{K}}$ the operator $\mathcal{A}_{\mathcal{K}}^*\mathcal{A}_{\mathcal{K}}$ is replaced by the identity, since representation coefficients can be kept in memory.
We then arrive at the following error estimate:
\begin{proposition}[Discretization error for uniform Riemann sums]\label{fact:err_RS}\hfill\\
	Let $C_0\in\mathfrak{S}^+(d)$ and $z_0\in\R^{2d}$.
	For the discretization of the phase space integral
	\begin{align*}
		(2\pi\e)^{-d}\int_{\R^{2d}}\left\langle g_z\mid g_{z_0}^{C_0,\e}\right\rangle g_z\,\mathrm{d}z
	\end{align*}
	using the phase space box $B$ in \eqref{eq:error_truncation1} and uniform Riemann sums with $k\ge 1$ grid points in each coordinate direction, there exist constants $c^{(\operatorname{T)}},c^{(\operatorname{RS})}>0$ such that
	\begin{align*}
		E_{wp}
		\le c^{(\operatorname{T)}}+c^{(\operatorname{RS})}k^{-1}.
	\end{align*}
\end{proposition}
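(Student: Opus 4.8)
The plan is to split the error $E_{wp} = \|\psi_0 - \mathcal{A}_{\mathcal{K}}^*\mathcal{A}_{\mathcal{K}}\psi_0\|_{L^2(B_q)}$ into a truncation part and a quadrature part by inserting the truncated integral $I_B(x) := \int_B \langle g_z \mid \psi_0\rangle g_z(x)\,\mathrm{d}z$ as an intermediate quantity. By the triangle inequality,
\begin{align*}
	E_{wp}
	\le \|\psi_0 - I_B\|_{L^2(B_q)} + \|I_B - \mathcal{A}_{\mathcal{K}}^*\mathcal{A}_{\mathcal{K}}\psi_0\|_{L^2(B_q)}.
\end{align*}
The first term I would control using Lemma~\ref{fact:truncation}: since $B_q$ has finite Lebesgue measure $|B_q| = (2b)^d$, the $L^2(B_q)$ norm is bounded by $|B_q|^{1/2}$ times the sup-norm estimate in \eqref{eq:truncated_integral}, giving $\|\psi_0 - I_B\|_{L^2(B_q)} \le (2b)^{d/2}\, c\, \exp(-\theta d b^2/4\e) =: c^{(\operatorname{T})}$, a constant independent of $k$. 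This uses that the hypotheses of Lemma~\ref{fact:truncation} — the two-sided spectral bounds $\theta, \Theta$ on $\operatorname{Im}(C), \operatorname{Im}(C_0), \operatorname{Im}(-C^{-1}), \operatorname{Im}(-C_0^{-1})$ — are available here for the fixed pair $C, C_0$.

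For the second term, I would recognize $\mathcal{A}_{\mathcal{K}}^*\mathcal{A}_{\mathcal{K}}\psi_0 = \sum_{\mathbf{k}\in\mathcal{K}} w_{\mathbf{k}}\langle g_{\mathbf{k}}\mid \psi_0\rangle g_{\mathbf{k}}$ as precisely the uniform Riemann sum approximating $I_B$, with the constant weights $w_{\mathbf{k}} = (2\pi\e)^{-d}\prod_j \Delta q_j \Delta p_j$ corresponding to a uniform subdivision of the box $B$ into $k^{2d}$ cells. So for each fixed $x$, the quantity $I_B(x) - (\mathcal{A}_{\mathcal{K}}^*\mathcal{A}_{\mathcal{K}}\psi_0)(x)$ is exactly the Riemann-sum error for the integrand $z \mapsto f_x(z) := (2\pi\e)^{-d}\langle g_z\mid\psi_0\rangle g_z(x)$ over $B$. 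Rescaling $B$ to $[0,1]^{2d}$ and applying Lemma~\ref{fact:multiRS} gives a pointwise bound $c_{f_x}\cdot d\cdot K^{-1/2d} = c_{f_x}\cdot d\cdot k^{-1}$, where $c_{f_x}$ is the Hardy--Krause total variation of $f_x$ on $B$. Then I would integrate in $x$ over $B_q$: $\|I_B - \mathcal{A}_{\mathcal{K}}^*\mathcal{A}_{\mathcal{K}}\psi_0\|_{L^2(B_q)} \le d\, k^{-1}\,\big(\int_{B_q} c_{f_x}^2\,\mathrm{d}x\big)^{1/2}$, and set $c^{(\operatorname{RS})} := d\,\big(\int_{B_q} c_{f_x}^2\,\mathrm{d}x\big)^{1/2}$.

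The main obstacle is showing that $\int_{B_q} c_{f_x}^2\,\mathrm{d}x < \infty$, i.e. that the total variation $c_{f_x}$ of the integrand is bounded uniformly (or at least square-integrably) in $x$. Here I would use that $\psi_0$ and $g_z$ are Gaussians: by Lemma~\ref{fact:inner}, $\langle g_z \mid \psi_0\rangle$ is itself a Gaussian wave packet in the phase-space variable $z$ with $\e$-Gaussian decay, and $g_z(x)$ depends smoothly on $z$ with all derivatives bounded by Gaussian-type factors; their product $f_x(z)$ is $C^\infty$ in $z$, and one can bound the mixed partial derivatives $\partial^{\mathbf{1}_S} f_x$ (in the sense entering the Hardy--Krause variation) by a Schwartz-type envelope in $(z,x)$ jointly, which is uniformly integrable over the compact box $B$ and decays in $x$, making $c_{f_x}$ bounded and even rapidly decaying in $x$. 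Combining the two pieces yields $E_{wp} \le c^{(\operatorname{T})} + c^{(\operatorname{RS})} k^{-1}$, as claimed; I would remark that $c^{(\operatorname{T})}$ can be made as small as desired by enlarging $b$, consistent with the exponential estimate in Lemma~\ref{fact:truncation}.
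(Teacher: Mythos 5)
Your proposal is correct and takes essentially the same route as the paper: the proposition is obtained there, without further detail, precisely by combining the truncation estimate of Lemma~\ref{fact:truncation} with the Riemann-sum estimate of Lemma~\ref{fact:multiRS} (with $K=k^{2d}$, so $K^{-1/2d}=k^{-1}$), which is exactly your decomposition into $c^{(\operatorname{T})}$ and $c^{(\operatorname{RS})}k^{-1}$. Your additional care about the $x$-dependence of the Hardy--Krause variation of the integrand just fills in a detail the paper leaves implicit, and is handled correctly by the Gaussian decay argument.
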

We learn from the previous discussions that the discretization of the phase space integral with conventional grid-based approaches such as Riemann sums in every coordinate direction results in an unacceptably large number of basis functions, since the total number of grid points increases exponentially with the dimension.
Sparse grid methods can overcome this curse of dimensionality to a certain extent, and we refer to \cite{Gerstner:1998} for a comprehensive presentation of several methods based on Smolyak's sparse grid construction and further developments.
As already mentioned, we plan to use tensor-train approximations to extend the dimensionality of dynamics simulable with the TSTG approach.
\begin{remark}
	In \cite{Bergold:2020u} we study the discretization of the wave packet transform via different quadrature rules.
	Based on Gauss--Hermite quadrature, we introduce a representation of Gaussian wave packets in which the number of basis functions is significantly reduced and therefore offers an alternative to the approximation with Riemann sums according to Proposition~\ref{fact:err_RS}. 
\end{remark}
%

\section{Methods for Propagating Gaussian Wave Packets}\label{sec:Methods for Propagating Gaussian Wave Packets}
This section deals with the propagation of the basis functions.
The main result is the error bound for a single TSTG step in Proposition~\ref{fact:TSTG1}, which combines an estimate for thawed Gaussian approximations with an estimate for the numerical integration of the underlying equations of motion.\\

Recall that in subroutine (s2) of the method the individual basis functions are propagated according to the (non-variational) thawed Gaussian equations, see \cite[Eq.~(17)]{Kong:2016}.
The equations for the parameters $z\in\R^{2d},\,C\in\mathfrak{S}^+(d)$ and $S\in\C$ in the definition of the manifold $\mathcal{M}$ combine the Hamiltonian system
\begin{align}\label{eq:hamilton}
	\dot z(t)
	=J\nabla h(z),\quad
	h(z)
	=\frac{1}{2}|p|^2+V(q),\quad
	J
	=
	\begin{pmatrix}
		0 & \operatorname{Id}_d\\
		-\operatorname{Id}_d & 0
	\end{pmatrix}
	\in\R^{2d\times 2d}
\end{align}
for the motion of the center $z(t)$ with equations for $C(t)$ and $S(t)$ ensuring that in the presence of a quadratic potential we obtain exact solutions.
In addition to the work done by Kong~\emph{et al.}, other propagation methods are also possible as long as the approximants $u_{\mathbf{k}}(\tau)$ lie in the Gaussian manifold $\mathcal{M}$ to ensure that the coefficients for the re-expansion can be calculated analytically.
For instance, variationally evolving Gaussians offer an alternative which, like the non-variational Gaussians, provide approximations with order $\mathcal{O}(\sqrt{\e})$ accuracy.
The approximate solution is then determined by the Dirac--Frenkel time-dependent variational approximation principle, see e.g. \cite[section~3]{Lasser:2020}, and the equations of motion for the parameters were first derived by Coalson and Karplus, see \cite{Coalson:1990}.
Using Hagedorn's parametrization $C=PQ^{-1}$, where the matrices $P,Q\in\C^{n\times n}$ are invertible and satisfy the relations
\begin{align}\label{eq:symplectic}
	Q^TP-P^TQ
	=0
	\quad\text{and}\quad
	Q^*P-P^*Q
	=2i\operatorname{Id},
\end{align}
these equations read
\begin{alignat}{3}\label{eq:eq_motion}
	\dot q
	&=p
	&&\qquad\text{and}\qquad
	\dot p
	&=&-\langle\nabla_x V\rangle_u,\notag\\[1mm]
	\dot Q
	&=P
	&&\qquad\text{and}\qquad
	\dot P\,
	&=&-\langle\nabla_x^2 V\rangle_uQ,
\end{alignat}
\vspace*{-5mm}
\begin{align*}
	S(t)
	=\int_0^t\left(\frac{1}{2}|p(s)|^2-\langle V\rangle_{u(s)}+\frac{\e}{4}\operatorname{tr}\Big(Q(s)^*\langle\nabla^2_x V\rangle_{u(s)}Q(s)\Big)\right)\mathrm{d}s,
\end{align*}
where we denote by $\langle W\rangle_u=\langle u\mid Wu\rangle,\,W\in\{V,\nabla_x V,\nabla^2_x V\}$, the expectation values.
In particular, for the propagation of the basis functions in the TSTG method the initial conditions are given by
\begin{align*}
	z_{\mathbf{k}}(0)
	=z_{\mathbf{k}},\,\,
	Q_{\mathbf{k}}(0)
	=\operatorname{Im}(C_0)^{-1/2},\,\,
	P_{\mathbf{k}}(0)
	=C_0Q_{\mathbf{k}}(0)
	\quad\text{and}\quad
	S_{\mathbf{k}}(0)
	=0,
\end{align*}
where $\operatorname{Im}(C_0)^{1/2}$ is the unique positive definite square root of $\operatorname{Im}(C_0)$.
\begin{remark}
	To get the equations of motion for the non-variational Gaussians as used by Kong~\emph{et al.}, we replace the equations in \eqref{eq:eq_motion} for $(q(t),p(t),Q(t),P(t))$ by the point evaluations
	\begin{alignat*}{3}
		\dot q
		&=p
		&&\qquad\text{and}\qquad
		\dot p
		&=&-V(q),\\[1mm]
		\dot Q
		&=P
		&&\qquad\text{and}\qquad
		\dot P\,
		&=&-\nabla_x^2 V(q)Q,
	\end{alignat*}
	\vspace*{-5mm}
	\begin{align*}
		S(t)
		=\int_0^t\left(\frac{1}{2}|p(s)|^2-V\big(q(s)\big)\right)\mathrm{d}s,
	\end{align*}
	which are computationally less demanding than the variational equations of motion.
	This implies that $Z(t)=(Q(t),P(t))$ is a solution to the linearization of the classical equations of motion,
	\begin{align*}
		\dot Z(t)
		=J\nabla^2h(z(t))Z(t),
	\end{align*}
	where the function $h$ and the symplectic matrix $J$ are defined according to \eqref{eq:hamilton}.
	Moreover, we note that in the presence of a quadratic potential the above equations coincide with those in \eqref{eq:eq_motion}.
	The parametrization in terms of $Z=(Q,P)$ goes back to the work of Hagedorn, see \cite{Hagedorn:1980,Hagedorn:1998} and the matrix conditions in \eqref{eq:symplectic} ensure the correct normalization of the approximant $u\in\mathcal{M}$.
\end{remark}
The next lemma presents the accuracy of the thawed Gaussian methods and extends the results for the $L^2$-error for variational Gaussians in \cite[Theorem~3.5]{Lasser:2020} to non-variational Gaussians.
We note that the first $L^2$-error for non-variational Gaussians was proved by Hagedorn, see \cite[Theorem~2.9]{Hagedorn:1998}.
\begin{lemma}\label{fact:thawed}
	Assume that
	\begin{itemize}
		\item the eigenvalues of the positive definite width matrix $\operatorname{Im}(C(t))$ are bounded from below by a constant $\rho>0$, for all $t\in[0,\tau]$.
		\item the potential function $V$ is three times continuously differentiable with a polynomially bounded third derivative.
	\end{itemize}
	Moreover, assume that $u(t)\in\mathcal{M}$ is an approximation to the Schr\"odinger equation that results from the thawed Gaussian method (variational or non-variational).
	Then, there exists a positive constant $c^{(1)}>0$ such that the error between the approximant $u(t)$ and the solution $\psi(t)$ is bounded in the $L^2$-norm by
	\begin{align}\label{eq:thawed1}
		\|u(t)-\psi(t)\|
		\le c^{(1)}\,t\sqrt{\e},
		\quad
		0\le t\le\tau,
	\end{align}
	where $c^{(1)}$ is independent of $\e$ and $t$ but depends on $\rho$.	
\end{lemma}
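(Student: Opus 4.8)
The plan is to estimate the error $\|u(t) - \psi(t)\|$ via a Duhamel (variation-of-constants) argument applied to the defect of the thawed Gaussian ansatz. First I would observe that $\psi(t) = U(t)\psi(0)$ solves $i\e\partial_t\psi = H\psi$ exactly with $\psi(0) = u(0)$, while the approximant $u(t)\in\mathcal{M}$ solves a perturbed equation $i\e\partial_t u = H u - \delta(t)$ where $\delta(t) := Hu(t) - i\e\partial_t u(t)$ is the \emph{residual} (or defect). The key structural fact to exploit is that the thawed Gaussian equations of motion (variational or non-variational) are constructed precisely so that, in the presence of a quadratic potential, $u(t)$ solves the Schr\"odinger equation exactly; hence the residual $\delta(t)$ only involves the \emph{anharmonic remainder} of the potential, i.e. the difference between $V$ and its second-order Taylor expansion around the center $q(t)$. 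Writing $V(x) = V(q) + \nabla V(q)\cdot(x-q) + \tfrac12 (x-q)^T\nabla^2 V(q)(x-q) + W_q(x)$ with the cubic remainder $W_q$, one gets (for the non-variational case) $\delta(t) = W_{q(t)}\,u(t)$, and for the variational case a similar expression in which the Taylor coefficients are replaced by expectation values but the residual is still governed by third-order terms.

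The main quantitative step is then to bound $\|\delta(t)\| = \|W_{q(t)} u(t)\|$. By Taylor's theorem with integral remainder, $|W_q(x)| \le C\,|x-q|^3 \cdot (1 + |x-q|)^N$ for some $N$ using the polynomial bound on $\nabla^3 V$; since $u(t)$ is a normalized Gaussian wave packet with width matrix $\operatorname{Im}(C(t))$ whose eigenvalues are bounded below by $\rho > 0$, the rescaling $x \mapsto (x - q(t))/\sqrt\e$ shows that $u(t)$ concentrates on a ball of radius $O(\sqrt\e)$ around $q(t)$. Computing the resulting Gaussian moments, $\|(x - q(t))^3 u(t)\| = O(\e^{3/2})$ and the higher polynomial corrections contribute only higher powers of $\sqrt\e$, with constants depending on $\rho$ (the lower bound on the width controls how spread out the Gaussian is, hence the moment bounds) but independent of $\e$. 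This yields $\|\delta(t)\| \le c\,\e^{3/2}$ uniformly on $[0,\tau]$.

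Finally I would close the argument with Duhamel: since $U(t)$ is unitary,
\begin{align*}
	\|u(t) - \psi(t)\|
	= \left\| \frac{1}{i\e}\int_0^t U(t-s)\,\delta(s)\,\mathrm{d}s \right\|
	\le \frac{1}{\e}\int_0^t \|\delta(s)\|\,\mathrm{d}s
	\le \frac{1}{\e}\cdot t \cdot c\,\e^{3/2}
	= c^{(1)}\, t\sqrt\e,
\end{align*}
which is the claimed bound. I expect the main obstacle to be the careful verification that the residual really is $O(\e^{3/2})$ in $L^2$ \emph{uniformly in} $t\in[0,\tau]$: this requires controlling the width matrix $\operatorname{Im}(C(t))$ and the center $q(t)$ over the whole interval (the hypothesis supplies the lower bound $\rho$ on the width, but one must still ensure the relevant moments and the polynomial growth factors stay bounded), and in the variational case it requires checking that replacing Taylor coefficients by expectation values $\langle\nabla_x V\rangle_u$, $\langle\nabla_x^2 V\rangle_u$ does not degrade the order — essentially because the expectation values agree with the Taylor coefficients up to $O(\e)$ errors that are themselves multiplied by a factor with a zero or first Gaussian moment, preserving the overall $\e^{3/2}$ scaling. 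The factor $1/\e$ from Duhamel is what turns $\e^{3/2}$ into $\sqrt\e$, and the linear-in-$t$ growth is simply the length of the time integral.
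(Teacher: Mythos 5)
Your proposal follows essentially the same route as the paper's proof: split $V=U_{q(t)}+W_{q(t)}$ into its second-order Taylor polynomial at the evolving center plus remainder, use exactness of the thawed Gaussian propagation for the quadratic part so that the defect is $-\tfrac{i}{\e}W_{q(t)}u(t)$, bound $\|W_{q(t)}u(t)\|\le c\,\e^{3/2}$ via Gaussian moment estimates with constants depending on $\rho$, and integrate the defect using unitarity to obtain $c^{(1)}t\sqrt{\e}$. The caveats you flag (uniform control over $[0,\tau]$ and the variational case, where exactness for quadratic potentials is invoked rather than re-derived) are exactly the points the paper handles by citing the Lasser--Lubich moment lemma and their Proposition~3.2, so the argument is correct and matches the paper.
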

The crucial ingredient for the proof is the fact that both the variational and the non-variational approximation are exact, provided that the potential is quadratic, see \cite[Proposition~3.2]{Lasser:2020}, and therefore the estimate in \eqref{eq:thawed1} follows from a bound on the defect for the cubic part of the potential.
\begin{proof}
	Let $U_q\colon\R^d\to\R$ denote the second-order Taylor polynomial of $V$ at $q$ and let $W_q\colon\R^d\to\R$ be the corresponding remainder, i.e.,
	\begin{align*}
		V
		=U_q+W_q.
	\end{align*}
	Since the approximant $u(t)\in\mathcal{M}$ is the exact solution to
	\begin{align*}
		i\e\partial_t u(t)
		=-\frac{\e^2}{2}\Delta_x u(t)+U_{q(t)}u(t),
		\quad
		u(0)
		=\psi(0)
		=\psi_0,
	\end{align*}
	we obtain
	\begin{align*}
		\partial_t(u-\psi)
		=\frac{1}{i\e}H(u-\psi)-\frac{1}{i\e}W_qu,
	\end{align*}
	where
	\begin{align*}
		\|W_qu\|
		&=(\pi\e)^{-d/4}\det(\operatorname{Im}C)^{1/4}\cdots\\
		&\qquad\left(\int_{\R^d}|W_q(x)|^2\exp\left(-\frac{1}{\e}(x-q)^T\operatorname{Im}C(x-q)\right)\,\mathrm{d}x\right)^{1/2}.
	\end{align*}
	Moreover, using that $W_q(x)$ is the non-quadratic remainder at $q$, an estimate for moments of Gaussian functions (see \cite[Lemma~3.8]{Lasser:2020}) yields the existence of a constant $c^{(1)}>0$, depending on $\rho$, such that
	\begin{align*}
		\|W_qu\|
		\le c^{(1)}\,\e^{3/2}.
	\end{align*}
	Consequently, since $u-\psi$ satisfies the Schr\"odinger equation up to the defect
	\begin{align*}
		d(t)
		=-\frac{i}{\e}W_{q(t)}u(t),
	\end{align*}
	we finally conclude that
	\begin{align*}
		\|u(t)-\psi(t)\|
		\le\int_0^t\|d(s)\|\,\mathrm{d}s
		=\frac{1}{\e}\int_0^t\|W_{q(s)}u(s)\|\,\mathrm{d}s
		\le c^{(1)}t\sqrt{\e}.
	\end{align*}
\end{proof}
\begin{remark}
	We note that the equations of motion are different for the variational and the non-variational thawed Gaussian method and therefore we get individual lower bounds on the eigenvalues of the width matrix, so that, although we have omitted this dependency in our notation of Lemma~\ref{fact:thawed}, individual constants result for the two methods.
	In particular, the estimate of Lasser and Lubich for Gaussian moments show that $c^{(1)}$ depends on the third derivative of $V$ and is of order $\rho^{-3/2}$ with respect to the spectral parameter $\rho$.
	We also mention that, in contrast to the computation of the full wave function, the error in the expectation value of observables improves to an order $\mathcal{O}(\e)$ accuracy, see \cite[Theorem~3.5b]{Lasser:2020}.
\end{remark}
The estimate in \eqref{eq:thawed1} shows that the thawed Gaussian approximations produce errors that increase linearly in $t$, where a small semiclassical parameter yields an improvement by a factor $\sqrt{\e}$ for the corresponding constant.
Since we want to use the thawed Gaussians for the TSTG method to approximate the time-evolution of the basis functions $g_{\mathbf{k},0}$, we see that the propagation time $\tau$ must be chosen such that we get accurate approximations for all $\mathbf{k}\in\mathcal{K}$.
A good choice of $\tau$ therefore enables the control of the error for the propagation of the basis, but small values result in more concatenation steps in order to approximate the solution for a fixed final time (we present numerical experiments for the dependency on $\tau$ in \S\ref{sub:One-dimensional harmonic oscillator}).
With this in mind, let us note that frozen Gaussian approximations would also be possible, see \cite{Heller:1981}.
On the one hand, this leads to simpler equations of motion since these approximations do not need information about the second derivative of the potential, but on the other hand, with an eye on the parameter $\e$, the frozen Gaussian method reduce the order to $\mathcal{O}(1)$.\\

We now turn to the numerical integration for the equations of motion.

\subsection{Time discretization}
For the integration of the equations of motion we need a suitable numerical integrator.
In \eqref{eq:integrator} we therefore introduced the approximate propagator $\mathcal{U}_{\mathbf{k}}^\tau\colon\mathcal{M}\to\mathcal{M}$, which has not yet been defined in detail except that it maps a Gaussian basis function $g_{\mathbf{k},0}$ to some numerical approximation $u^\tau_{\mathbf{k}}\approx g_{\mathbf{k}}(\tau)$.
The development of such integrators essentially uses exponential operator splitting methods such as the first-order Lie splitting or the second-order Strang splitting, where we say that the integrator is of order $s\ge 1$, if there exists a constant $c^{(2)}>0$ such that the error between the approximant $u^\tau_{\mathbf{k}}$, obtained after $m\ge 1$ steps of size $h_\tau=\tau/m$, and the true solution $u_{\mathbf{k}}(\tau)$ is bounded in the $L^2$-norm by
\begin{align}\label{eq:thawed_integrator}
	\|u_{\mathbf{k}}^\tau-u_{\mathbf{k}}(\tau)\|_{L^2(\R^d)}
	\le c^{(2)}\tau\frac{h_\tau^s}{\e}.
\end{align}
For example, the $L^2$-error of Strang splitting is $\mathcal{O}(h_\tau^2/\e)$, which implies that the step size $h_\tau$ must be sufficiently smaller than $\sqrt{\e}$, and we refer to \cite{Descombes:2010} for rigorous error bounds in the semiclassical scaling $\e\ll 1$.\\

Equipped with a numerical integrator, we get the following error:
\begin{proposition}\label{fact:error_time_discrete}
	For $\tau>0$ and a uniform time grid of step size $h_\tau>0$ let 
	\begin{align}\label{eq:error_integrator1}
		E_{\mathbf{k}}^\tau
		=E_{\mathbf{k}}^\tau(h_\tau)
		:=\|u_{\mathbf{k}}^\tau-g_{\mathbf{k}}(\tau)\|_{L^2(\R)},
		\quad
		\mathbf{k}\in\mathcal{K}.
	\end{align}
	Moreover, assume that $\mathcal{U}_{\mathbf{k}}^\tau\colon\mathcal{M}\to\mathcal{M}$ is a numerical integrator of order $s\ge 1$.
	Then, under the hypotheses of Lemma~\ref{fact:thawed}, for all $\mathbf{k}\in\mathcal{K}$ there exists a positive constant $c_{\mathbf{k}}>0$ such that
	\begin{align}\label{eq:error_integrator2}
		E_{\mathbf{k}}^\tau
		\le c_{\mathbf{k}}\tau\left(\frac{h_\tau^s}{\e}+\sqrt{\e}\right).
	\end{align}
\end{proposition}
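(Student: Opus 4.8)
The plan is to decompose $E_{\mathbf{k}}^\tau$ by the triangle inequality into the thawed Gaussian \emph{modelling} error and the \emph{time-discretization} error, and then to bound the two contributions by Lemma~\ref{fact:thawed} and the order condition \eqref{eq:thawed_integrator}, respectively. Concretely, inserting the exact thawed Gaussian flow $u_{\mathbf{k}}(\tau)\in\mathcal{M}$ (variational or non-variational) started from $g_{\mathbf{k},0}$, I would write
\[
	E_{\mathbf{k}}^\tau
	=\|u_{\mathbf{k}}^\tau-g_{\mathbf{k}}(\tau)\|_{L^2(\R)}
	\le\|u_{\mathbf{k}}^\tau-u_{\mathbf{k}}(\tau)\|_{L^2(\R)}+\|u_{\mathbf{k}}(\tau)-g_{\mathbf{k}}(\tau)\|_{L^2(\R)}.
\]

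For the second term, note that $g_{\mathbf{k}}(\tau)=U(\tau)g_{\mathbf{k},0}$ is precisely the exact solution of the Schr\"odinger equation \eqref{eq:tdse} with initial datum $g_{\mathbf{k},0}$, while $u_{\mathbf{k}}(\tau)$ is the exact thawed Gaussian approximant with the same initial datum. Under the hypotheses of Lemma~\ref{fact:thawed} --- a lower bound $\rho_{\mathbf{k}}>0$ on the eigenvalues of $\operatorname{Im}(C_{\mathbf{k}}(t))$ for $t\in[0,\tau]$ together with the standing assumption that $V$ is three times continuously differentiable with polynomially bounded third derivative --- Lemma~\ref{fact:thawed} applies verbatim and yields a constant $c^{(1)}_{\mathbf{k}}>0$, depending on $\rho_{\mathbf{k}}$ and on the polynomial bound for the third derivative of $V$ along the $\mathbf{k}$-th trajectory, with
\[
	\|u_{\mathbf{k}}(\tau)-g_{\mathbf{k}}(\tau)\|_{L^2(\R)}
	\le c^{(1)}_{\mathbf{k}}\,\tau\sqrt{\e}.
\]

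For the first term I would simply invoke the hypothesis that $\mathcal{U}_{\mathbf{k}}^\tau$ is an integrator of order $s\ge1$: by the defining property \eqref{eq:thawed_integrator}, after $m=\tau/h_\tau$ steps of size $h_\tau$ there is a constant $c^{(2)}_{\mathbf{k}}>0$ with $\|u_{\mathbf{k}}^\tau-u_{\mathbf{k}}(\tau)\|_{L^2(\R)}\le c^{(2)}_{\mathbf{k}}\,\tau h_\tau^s/\e$. Adding the two bounds and setting $c_{\mathbf{k}}:=\max\{c^{(1)}_{\mathbf{k}},c^{(2)}_{\mathbf{k}}\}$ gives
\[
	E_{\mathbf{k}}^\tau
	\le c^{(2)}_{\mathbf{k}}\,\tau\frac{h_\tau^s}{\e}+c^{(1)}_{\mathbf{k}}\,\tau\sqrt{\e}
	\le c_{\mathbf{k}}\,\tau\left(\frac{h_\tau^s}{\e}+\sqrt{\e}\right),
\]
which is \eqref{eq:error_integrator2}.

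All the analytic content is already packaged in Lemma~\ref{fact:thawed} and in the order condition \eqref{eq:thawed_integrator}, so there is no genuine obstacle in the argument itself; the only points that need a word of care are (i) the identification of the ``true solution'' in Lemma~\ref{fact:thawed} with the propagated basis function $g_{\mathbf{k}}(\tau)=U(\tau)g_{\mathbf{k},0}$, which is immediate from the definition of $g_{\mathbf{k}}(\tau)$, and (ii) the bookkeeping of constants: because both $\rho_{\mathbf{k}}$ and the portion of phase space visited by the $\mathbf{k}$-th classical trajectory depend on the grid point $z_{\mathbf{k}}$, the constant $c_{\mathbf{k}}$ genuinely depends on $\mathbf{k}$ and cannot in general be replaced by a single constant uniform over $\mathcal{K}$ --- a subtlety that reappears when these per-basis-function estimates are concatenated in Section~\ref{sec:Error Estimates for the Concatenation}.
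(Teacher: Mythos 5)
Your proof is correct and follows the same route as the paper: the triangle inequality with the exact thawed Gaussian flow $u_{\mathbf{k}}(\tau)$ inserted, then Lemma~\ref{fact:thawed} for the modelling error and the order condition \eqref{eq:thawed_integrator} for the time-discretization error. The only (immaterial) difference is that you take $c_{\mathbf{k}}=\max\{c^{(1)}_{\mathbf{k}},c^{(2)}_{\mathbf{k}}\}$ where the paper uses the sum $c^{(1)}_{\mathbf{k}}+c^{(2)}_{\mathbf{k}}$; both choices yield \eqref{eq:error_integrator2}.
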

\begin{proof}
	Let $\tau>0$ and $m\ge 1$.
	For all $\mathbf{k}\in\mathcal{K}$, we combine the estimate in \eqref{eq:thawed1} with the estimate in \eqref{eq:thawed_integrator} to obtain
	\begin{align*}
		E_{\mathbf{k}}^\tau
		\le\|u_{\mathbf{k}}^\tau-u_{\mathbf{k}}(\tau)\|_{L^2(\R)}+\|u_{\mathbf{k}}(\tau)-g_{\mathbf{k}}(\tau)\|_{L^2(\R)}
		\le c_{\mathbf{k}}^{(2)}\tau\frac{h_\tau^s}{\e}+c_{\mathbf{k}}^{(1)}\tau\sqrt{\e}.
	\end{align*}
	Consequently, the bound in \eqref{eq:error_integrator2} follows for the constant
	\begin{align*}
		c_\mathbf{k}
		=\max_{\mathbf{k}\in\mathcal{K}}\left(c^{(1)}_{\mathbf{k}},c^{(2)}_{\mathbf{k}}\right).
	\end{align*}
\end{proof}
A practical second-order algorithm of the variational splitting was proposed and studied by Faou and Lubich, see \cite{Faou:2006}.
In particular, it conserves the norm and the symplecticity relations of the matrices $Q$ and $P$ in \eqref{eq:symplectic}.
Moreover, we note that there are various higher-order splittings for the unitary propagator that can also be used and refer the interested reader to \cite{McLachlan:2002} and \cite[chapter~III]{Hairer:2006}.\\

We are now equipped with an error estimate for the discretization of the wave packet transform, for the thawed Gaussian approximations and for the numerical integration of the thawed equations of motion.
We are therefore ready to analyze the error generated by a single TSTG step.
Afterwards, in Theorem~\ref{fact:TSTG_global} we lift this error estimate to a global one.

\subsection{Error after a single TSTG step}
Recall that a single TSTG step consists of the following approximations:
\begin{enumerate}
	\item the approximation of the initial wave function $\psi_0$ in the approximation space $\mathcal{V}_{\mathcal{K}}$ according to subroutine (s1)\\[-3mm]
	\item the propagation of the basis according to (s2)\\[-3mm]
	\item the re-expansion of the time-evolved basis in $\mathcal{V}_{\mathcal{K}}$ according to (s3)
\end{enumerate}
For a tensor $(c_{\mathbf{k}})\in\C^\mathcal{K}$ let us introduce the following notation for its 1-norm:
\begin{align*}
	\|c_{\mathbf{k}}\|_1
	:=\sum_{\mathbf{k}\in\mathcal{K}}|c_{\mathbf{k}}|.
\end{align*} 
We then obtain the following result:
\begin{proposition}[Error after a single TSTG step]\label{fact:TSTG1}
	For a given box $B\subset\R^{2d}$ in phase space and a finite index set $\mathcal{K}\subset\N^{2d}$ recall the definition of the spatial discretization error $E_{wp}$ in \eqref{eq:error_wp}.
	Moreover, for $\mathbf{k}\in\mathcal{K},\,\tau>0$ and $h_\tau>0$ recall the definition of the time discretization error $E_{\mathbf{k}}^\tau$ in \eqref{eq:error_integrator1} produced by a numerical propagator of order $s\ge 1$ for the thawed equations of motion.
	Then, there exists a positive constant $C>0$ such that
	\begin{align}\label{eq:TSTG1}
		\|\psi(\tau)-\sum_{\mathbf{k}\in\mathcal{K}}c^{1,\tau}_{\mathbf{k}}\,g_{\mathbf{k},0}\|_{L^2(B_q)}
		\le C\tau\left(\frac{h_\tau^s}{\e}+\sqrt{\e}\right)+E^{1,\tau},
	\end{align}
	where $E^{1,\tau}>0$ denotes the following bound for the total spatial discretization error:
	\begin{align*}
		E^{1,\tau}
		=E_{wp}(\psi_0)+C\tau\left(\frac{1+h_\tau^s}{\e}\right).
	\end{align*}
\end{proposition}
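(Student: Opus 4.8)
The plan is to decompose the error
$\psi(\tau)-\psi_{\mathcal K}^{1,\tau}$, where $\psi_{\mathcal K}^{1,\tau}=\mathcal S_{\mathcal K}\mathcal R_{\mathcal K}^\tau\mathcal A_{\mathcal K}\psi_0=\sum_{\mathbf k}c_{\mathbf k}^{1,\tau}g_{\mathbf k,0}$, along the three subroutines (s1)--(s3) using the triangle inequality in $L^2(B_q)$, and to bound each resulting piece by one of the already-established estimates. First I would insert the intermediate objects
$\mathcal S_{\mathcal K}\mathcal A_{\mathcal K}\psi(\tau)$ (the analysis/synthesis applied to the exact solution) and $\sum_{\mathbf k}c_{\mathbf k}(\psi_0)\,g_{\mathbf k}(\tau)$ (the exact propagation of the represented initial data) and $\sum_{\mathbf k}c_{\mathbf k}(\psi_0)\,u_{\mathbf k}^\tau$ (the numerically propagated, not-yet-reexpanded approximants). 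Concretely, write
\begin{align*}
	\psi(\tau)-\psi_{\mathcal K}^{1,\tau}
	&=\Big(\psi(\tau)-\textstyle\sum_{\mathbf k}c_{\mathbf k}(\psi_0)g_{\mathbf k}(\tau)\Big)
	+\Big(\textstyle\sum_{\mathbf k}c_{\mathbf k}(\psi_0)\big(g_{\mathbf k}(\tau)-u_{\mathbf k}^\tau\big)\Big)\\
	&\qquad+\Big(\textstyle\sum_{\mathbf k}c_{\mathbf k}(\psi_0)\big(u_{\mathbf k}^\tau-\mathcal S_{\mathcal K}\mathcal A_{\mathcal K}u_{\mathbf k}^\tau\big)\Big).
\end{align*}

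For the first bracket, use linearity of the exact propagator $U(\tau)$: since $\psi(\tau)=U(\tau)\psi_0$ and $g_{\mathbf k}(\tau)=U(\tau)g_{\mathbf k,0}$, this bracket equals $U(\tau)\big(\psi_0-\mathcal S_{\mathcal K}\mathcal A_{\mathcal K}\psi_0\big)$; unitarity of $U(\tau)$ on $L^2(\R^d)$ bounds its full $L^2$-norm, hence also its $L^2(B_q)$-norm, by $\|\psi_0-\mathcal A_{\mathcal K}^*\mathcal A_{\mathcal K}\psi_0\|_{L^2(\R^d)}$, which dominates $E_{wp}(\psi_0)$. For the second bracket I would pull the sum out with the $1$-norm of the coefficient tensor and invoke Proposition~\ref{fact:error_time_discrete}: each $\|g_{\mathbf k}(\tau)-u_{\mathbf k}^\tau\|_{L^2}=E_{\mathbf k}^\tau\le c_{\mathbf k}\tau(h_\tau^s/\e+\sqrt\e)$, so the bracket is bounded by $\|c_{\mathbf k}(\psi_0)\|_1\cdot\max_{\mathbf k}c_{\mathbf k}\cdot\tau(h_\tau^s/\e+\sqrt\e)$, which gives the $C\tau(h_\tau^s/\e+\sqrt\e)$ term with $C$ absorbing $\|c_{\mathbf k}(\psi_0)\|_1$ and $\max_{\mathbf k}c_{\mathbf k}$. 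For the third bracket, again bound by $\|c_{\mathbf k}(\psi_0)\|_1\cdot\max_{\mathbf k}\|u_{\mathbf k}^\tau-\mathcal S_{\mathcal K}\mathcal A_{\mathcal K}u_{\mathbf k}^\tau\|_{L^2(B_q)}=\|c_{\mathbf k}(\psi_0)\|_1\cdot\max_{\mathbf k}E_{wp}(u_{\mathbf k}^\tau)$, which is the second summand in $E^{1,\tau}$ (here $C$ again absorbs $\|c_{\mathbf k}(\psi_0)\|_1$).

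Collecting the three bounds and taking $C$ to be the maximum of the constants arising in the three steps yields \eqref{eq:TSTG1}. The only point requiring a little care — and what I expect to be the main (mild) obstacle — is the bookkeeping of the constant $C$: it must be chosen uniformly so that the same $C$ works both as the prefactor of $\tau(h_\tau^s/\e+\sqrt\e)$ and in the definition of $E^{1,\tau}$, and one must check that $\|c_{\mathbf k}(\psi_0)\|_1$ and $\max_{\mathbf k}c_{\mathbf k}$ from Proposition~\ref{fact:error_time_discrete} are finite and independent of the discretization parameters that appear elsewhere (they are, since $\mathcal K$ is a fixed finite set and, for Gaussian $\psi_0$, the coefficients $c_{\mathbf k}(\psi_0)$ are explicit weighted Gaussians in phase space by Lemma~\ref{fact:gaussian_coefficients}). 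Beyond that, every estimate needed is already available: unitarity of $U(\tau)$, Proposition~\ref{fact:error_time_discrete} for the time-discretization error, and the definition \eqref{eq:error_wp} of $E_{wp}$ for the two synthesis/analysis defects. Note also that the hypotheses of Lemma~\ref{fact:thawed} are assumed, which is exactly what Proposition~\ref{fact:error_time_discrete} requires.
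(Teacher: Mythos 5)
Your proposal is correct and follows essentially the same route as the paper's proof: unitarity of $U(\tau)$ to peel off the initial representation error $E_{wp}(\psi_0)$, then the triangle inequality with the coefficient $1$-norm to reduce everything to $\sum_{\mathbf k}|c_{\mathbf k}(\psi_0)|\bigl(E_{\mathbf k}^\tau+E_{wp}(u_{\mathbf k}^\tau)\bigr)$, and finally Proposition~\ref{fact:error_time_discrete}, with the constant chosen exactly as you describe, $C=\|c_{\mathbf k}(\psi_0)\|_1\cdot\max\bigl(1,\max_{\mathbf k}c_{\mathbf k}\bigr)$. Even the mild imprecision you flag yourself (unitarity controls the full $L^2(\R^d)$-norm of the propagated representation defect rather than the restricted quantity $E_{wp}(\psi_0)$ on $B_q$) appears in the same form in the paper's own argument, so your write-up matches it in both structure and level of rigor.
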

\begin{proof}
	In the following let $\|\bullet\|$ denote the $L^2$-norm on the box $B_q$ in position space.
	Using that the evolution operator $U(\tau)=e^{-iH\tau/\e}$ is unitary, we have
	\begin{align*}
		&\|\psi(\tau)-\sum_{\mathbf{k}\in\mathcal{K}}c^{1,\tau}_{\mathbf{k}}\,g_{\mathbf{k},0}\|
		=\|U(\tau)\psi_0-\sum_{\mathbf{k}\in\mathcal{K}}c^{1,\tau}_{\mathbf{k}}\,g_{\mathbf{k},0}\|\\
		&\qquad\le\|U(\tau)\left(\psi_0-\mathcal{A}_{\mathcal{K}}^*\mathcal{A}_{\mathcal{K}}\psi_0\right)+\mathcal{A}_{\mathcal{K}}^*\mathcal{A}_{\mathcal{K}}U(\tau)\psi_0-\sum_{\mathbf{k}\in\mathcal{K}}c^{1,\tau}_{\mathbf{k}}\,g_{\mathbf{k},0}\|\\
		&\qquad\le E_{wp}(\psi_0)+\|\sum_{\mathbf{k}\in\mathcal{K}}c_{\mathbf{k}}(\psi_0)\,g_{\mathbf{k}}(\tau)-\sum_{\mathbf{k}\in\mathcal{K}}c^{1,\tau}_{\mathbf{k}}\,g_{\mathbf{k},0}\|.
	\end{align*}
	For the second summand, the definition of the coefficients $c^{1,\tau}_{\mathbf{k}}$ in \eqref{eq:cn} yields
	\begin{equation}\label{eq:estimate_max}
		\begin{split}
			&\|\sum_{\mathbf{k}\in\mathcal{K}}c_{\mathbf{k}}(\psi_0)\,g_{\mathbf{k}}(\tau)-\sum_{\mathbf{k}\in\mathcal{K}}c^{1,\tau}_{\mathbf{k}}\,g_{\mathbf{k},0}\|\\
			&\qquad\le\sum_{\mathbf{k}\in\mathcal{K}}|c_{\mathbf{k}}(\psi_0)|\Big(\|\,g_{\mathbf{k}}(\tau)-u_{\mathbf{k}}^\tau\|+\|u_{\mathbf{k}}^\tau-\sum_{\mathbf{k}'\in\mathcal{K}}c_{\mathbf{k}'}(u^\tau_{\mathbf{k}})\,g_{\mathbf{k}',0}\|\Big)\\
			&\qquad\le\sum_{\mathbf{k}\in\mathcal{K}}|c_{\mathbf{k}}(\psi_0)|\Big(E_{\mathbf{k}}^\tau+E_{wp}(u^\tau_{\mathbf{k}})\Big).
		\end{split}
	\end{equation}
	In particular, as proved in Appendix~\ref{sec:Reconstruction Error}, for all $\mathbf{k}\in\mathcal{K}$ there exists a positive constant $\tilde c_{\mathbf{k}}>0$ such that
	\begin{align*}
		E_{wp}(u^\tau_{\mathbf{k}})
		\le\tilde c_{\mathbf{k}}\left(\frac{1+h_\tau^s}{\e}\right)\tau.
	\end{align*}
	Consequently, using the bound for $E_{\mathbf{k}}^\tau$ in \eqref{eq:error_integrator2} with the constant $c_{\mathbf{k}}>0$, the estimate in \eqref{eq:TSTG1} follows for the choice
	\begin{align*}
		C
		=\|c_{\mathbf{k}}(\psi_0)\|_1\cdot\max\left(\max_{\mathbf{k}\in\mathcal{K}}\,\tilde c_{\mathbf{k}},\,\max_{\mathbf{k}\in\mathcal{K}}\,c_{\mathbf{k}}\right).
	\end{align*}
\end{proof}
We note that \eqref{eq:TSTG1} combines the $1$-norm with the $\max$-norm to bound the last sum in \eqref{eq:estimate_max}.
Since the spatial errors $E_{wp}(u^\tau_{\mathbf{k}})$ will increase at the boundary of the grid $\{z_{\mathbf{k}}\}_{\mathbf{k}\in\mathcal{K}}$, but at the same time the coefficients $c_{\mathbf{k}}(\psi_0)$ decrease exponentially with the distance $\|z_{\mathbf{k}}-z_0\|_2$, other H\"older conjugate exponents, which reflect this grid-dependent interplay more accurately, could also be chosen.\\

In the next section we investigate the error that is produced by the concatenation of single TSTG steps.

\section{Error Estimates for the Concatenation}\label{sec:Error Estimates for the Concatenation}
As discussed in \S\ref{sec:TSTG Propagation}, approximations for larger times $2\tau,3\tau,\dots$ are based on the updated coefficients $c^{2,\tau}_{\mathbf{k}},c^{3,\tau}_{\mathbf{k}},\dots$ which are given by the recursion formula in \eqref{eq:cn}.
We therefore start to investigate the magnitude of these coefficients.\\

Recall that the time-evolved Gaussian approximants $u_{\mathbf{k}}^\tau\in\mathcal{M}$ are re-expanded in the original basis of Gaussians $g_{\mathbf{k},0}$, which gives us the updated coefficients
\begin{align*}
	c_{\mathbf{k}}^{1,\tau}
	=\mathcal{R}_{\mathcal{K}}^\tau\mathcal{A}_{\mathcal{K}}\psi_0
	=\sum_{\mathbf{k}'\in\mathcal{K}}c_{\mathbf{k}'}(\psi_0)c_{\mathbf{k}}(u^\tau_{\mathbf{k}'}).
\end{align*}
Since both factors $c_{\mathbf{k}'}(\psi_0)$ and $c_{\mathbf{k}}(u^\tau_{\mathbf{k}'})$ are Gaussian wave packets in phase space, the coefficients can be bounded by a Gaussian envelope (as a sum of Gaussians) and therefore, by induction on $n$, Gaussian bounds can be derived for all higher-order coefficients $c^{n,\tau}_{\mathbf{k}},n>1$:
\begin{proposition}\label{fact:exp_decay}
	For $z_0\in\R^{2d}$ and $C_0\in\mathfrak{S}^+(d)$ let $\psi_0=g_{z_0}^{C_0,\e}$ and $\{z_{\mathbf{k}}\}_{\mathbf{k}\in\mathcal{K}}$ be an arbitrary grid in phase space.
	Then, for all $n\ge 0$ and $\tau>0$, there exist positive constants $\zeta_n^{\e,\tau},\theta_n^\tau>0$ such that for all $\mathbf{k}\in\mathcal{K}$ we have
	\begin{align}\label{eq:exp_decay1}
		|c^{n,\tau}_{\mathbf{k}}|
		\le\zeta_n^{\e,\tau}\exp\left(-\frac{\theta_n^\tau}{8\e}\|z_{\mathbf{k}}-z_0\|_2^2\right).
	\end{align}
\end{proposition}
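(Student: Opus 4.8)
The plan is to prove the statement by induction on $n$, using the analytic Gaussian structure of all coefficients established in Lemma~\ref{fact:inner} and Lemma~\ref{fact:gaussian_coefficients}. The base case $n=0$ is precisely the content of Lemma~\ref{fact:inner}: since $c^{0,\tau}_{\mathbf{k}} = c_{\mathbf{k}}(\psi_0) = w_{\mathbf{k}}\langle g_{\mathbf{k}}\mid \psi_0\rangle$ with $\psi_0 = g_{z_0}^{C_0,\e}$, the bound \eqref{eq:inner3} gives $|c^{0,\tau}_{\mathbf{k}}|^2 \le w_{\mathbf{k}}^2\gamma\exp(-\tfrac{\theta}{4\e}\|z_{\mathbf{k}}-z_0\|_2^2)$, and choosing $\gamma_0^{\e,\tau}$ to absorb the weight bound and $\gamma$, and $\theta_0^\tau = \theta$, yields \eqref{eq:exp_decay1} with $n=0$ (here I use that the width $C$ of the basis functions and $C_0$ of $\psi_0$ are fixed, so the spectral parameter $\theta$ and upper bound are available uniformly).

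For the inductive step, I would assume \eqref{eq:exp_decay1} holds for some $n \ge 0$ and write, using the recursion \eqref{eq:cn},
\begin{align*}
	|c^{n+1,\tau}_{\mathbf{k}}|
	\le \sum_{\mathbf{k}'\in\mathcal{K}} |c^{n,\tau}_{\mathbf{k}'}|\,|c_{\mathbf{k}}(u^\tau_{\mathbf{k}'})|.
\end{align*}
Each $u^\tau_{\mathbf{k}'} \in \mathcal{M}$ is a Gaussian wave packet centered at some phase space point $z_{\mathbf{k}'}(\tau)$ with width matrix $C_{\mathbf{k}'}(\tau) \in \mathfrak{S}^+(d)$, and $c_{\mathbf{k}}(u^\tau_{\mathbf{k}'}) = w_{\mathbf{k}}\langle g_{\mathbf{k}}\mid u^\tau_{\mathbf{k}'}\rangle$, so by Lemma~\ref{fact:inner} again, $|c_{\mathbf{k}}(u^\tau_{\mathbf{k}'})| \le w_{\mathbf{k}}\sqrt{\gamma'}\exp(-\tfrac{\theta'}{8\e}\|z_{\mathbf{k}} - z_{\mathbf{k}'}(\tau)\|_2^2)$ for suitable spectral constants, provided the eigenvalues of $\operatorname{Im}(C_{\mathbf{k}'}(\tau))$ and $\operatorname{Im}(-C_{\mathbf{k}'}(\tau)^{-1})$ stay in a fixed compact range — this is ensured by the hypothesis of Lemma~\ref{fact:thawed} on $\rho$ together with the finiteness of $\mathcal{K}$ and $\tau$. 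Inserting the inductive hypothesis, the summand is a product of two Gaussians in $z_{\mathbf{k}}$, one centered at $z_0$ (after noting $\|z_{\mathbf{k}'} - z_0\|$ controls $\|z_{\mathbf{k}'}(\tau) - \text{(its flow image)}\|$) and one centered near $z_{\mathbf{k}'}(\tau)$; the standard "Gaussian times Gaussian is a Gaussian times a constant" identity then bounds each term by $\text{const}\cdot\exp(-\tfrac{\tilde\theta}{8\e}\|z_{\mathbf{k}} - \tilde z\|_2^2)$, and since there are $|\mathcal{K}|$ terms the sum is bounded by $|\mathcal{K}|\cdot\text{const}\cdot\exp(-\tfrac{\theta_{n+1}^\tau}{8\e}\|z_{\mathbf{k}} - z_0\|_2^2)$ after a triangle-inequality/Young-type rearrangement to recenter everything at $z_0$. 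This defines $\gamma_{n+1}^{\e,\tau}$ and $\theta_{n+1}^\tau$ recursively.

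The main obstacle I anticipate is bookkeeping the recentering: the flow $z \mapsto z(\tau)$ moving the center of each basis function is generically nonlinear (it is the Hamiltonian flow of $h$), so "$\|z_{\mathbf{k}} - z_0\|$ small" does not translate linearly into "$\|z_{\mathbf{k}} - z_{\mathbf{k}'}(\tau)\|$ large unless $z_{\mathbf{k}'}$ is far from $z_0$". The clean way around this is to observe that the product of the two Gaussian factors, as a function of $z_{\mathbf{k}'}$, is itself integrable/summable with a bound independent of $z_{\mathbf{k}}$ except through a residual Gaussian in $\|z_{\mathbf{k}} - z_0\|$ — concretely, one completes the square in the exponent $-\tfrac{\theta_n^\tau}{8\e}\|z_{\mathbf{k}'} - z_0\|^2 - \tfrac{\theta'}{8\e}\|z_{\mathbf{k}} - z_{\mathbf{k}'}(\tau)\|^2$ and uses that $z_{\mathbf{k}'} \mapsto z_{\mathbf{k}'}(\tau)$ is a diffeomorphism with derivative bounded above and below on the relevant compact set, so the combined exponent is $\le -c\|z_{\mathbf{k}} - z_0\|^2/\e$ plus a term bounded uniformly in $\mathbf{k}'$. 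One then pays a factor $|\mathcal{K}|$ (or, with a little more care, a $z_{\mathbf{k}}$-independent constant from a geometric-series / Gaussian-sum argument on the lattice) and closes the induction. All constants depend on $\e$, $\tau$, the potential (through the flow and the width evolution), and $\rho$, but — crucially — the exponential rate $\theta_n^\tau$ shrinks with $n$, which is exactly why the statement only claims existence of $\gamma_n^{\e,\tau}, \theta_n^\tau$ rather than uniformity in $n$.
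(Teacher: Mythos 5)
Your proposal is correct and follows essentially the same route as the paper: induction on $n$ via the recursion \eqref{eq:cn}, a Gaussian bound on $c_{\mathbf{k}}(u^\tau_{\mathbf{k}'})$ from Lemma~\ref{fact:inner} recentered from the evolved center $z_{\mathbf{k}'}(\tau)$ to the original grid point $z_{\mathbf{k}'}$ using the uniformly bounded phase-space displacement (the paper isolates this step as Lemma~\ref{fact:shifted_envelope}), followed by a Gaussian-times-Gaussian rearrangement that yields the harmonic-mean decay rate $\theta_n^\tau$ shrinking in $n$. The only real difference is in the bookkeeping of the sum over $\mathbf{k}'$: where you pay a factor $|\mathcal{K}|$ (or gesture at a lattice Gaussian-sum argument), the paper invokes its discrete Gaussian convolution estimate in Appendix~\ref{sec:Discrete Gaussian Convolution}, whose constant depends only on the minimal grid spacing rather than on the number of grid points --- both suffice for the statement as formulated.
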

For the proof of Proposition~\ref{fact:exp_decay} we first derive an auxiliary result that allows us to bound the representation coefficients $c_{\mathbf{k}}(u^\tau_{\mathbf{k}'})$ of the time-evolved Gaussian approximant $u^\tau_{\mathbf{k}'}$, which according to Lemma~\ref{fact:gaussian_coefficients} is a Gaussian in phase space centered at $z_{\mathbf{k}'}(\tau)$, by a Gaussian envelope centered at $z_{\mathbf{k}'}=z_{\mathbf{k}'}(0)$.
\begin{lemma}\label{fact:shifted_envelope}
	Under the assumptions of Proposition~\ref{fact:exp_decay}, for all $\mathbf{k}'\in\mathcal{K}$, there exist positive constants $\zeta_{\mathbf{k}'}^\tau>0$ and $\theta_{\mathbf{k}'}^\tau>0$ such that for all $\mathbf{k}\in\mathcal{K}$ we have 	
	\begin{align}\label{eq:shifted_envelope1}
		|c_{\mathbf{k}}(u^\tau_{\mathbf{k}'})|
		\le\zeta_{\mathbf{k}'}^\tau\exp\left(-\frac{\theta_{\mathbf{k}'}^\tau}{8\e}\|z_{\mathbf{k}}-z_{\mathbf{k}'}\|^2\right).
	\end{align}
\end{lemma}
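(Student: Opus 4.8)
The plan is to exploit the fact that the representation coefficient $c_{\mathbf{k}}(u^\tau_{\mathbf{k}'})$ samples an inner product of two Gaussians, namely $c_{\mathbf{k}}(u^\tau_{\mathbf{k}'}) = w_{\mathbf{k}} \langle g_{\mathbf{k},0} \mid u^\tau_{\mathbf{k}'}\rangle$, where $u^\tau_{\mathbf{k}'} \in \mathcal{M}$ is a complex Gaussian wave packet of the form $g_{z_{\mathbf{k}'}(\tau)}^{C_{\mathbf{k}'}(\tau),\e} e^{iS/\e}$ with phase space center $z_{\mathbf{k}'}(\tau)$ obtained by solving the (variational or non-variational) thawed equations of motion over the finite time interval $[0,\tau]$. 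First I would apply Lemma~\ref{fact:inner} to the pair $(g_{\mathbf{k},0}, u^\tau_{\mathbf{k}'})$: since $|e^{iS/\e}| = 1$, the phase factor does not affect the modulus, and the bound \eqref{eq:inner3} gives
\begin{align*}
	|c_{\mathbf{k}}(u^\tau_{\mathbf{k}'})|
	= w_{\mathbf{k}} \left|\langle g_{\mathbf{k},0} \mid u^\tau_{\mathbf{k}'}\rangle\right|
	\le w_{\mathbf{k}}\sqrt{\gamma}\,\exp\left(-\frac{\theta}{8\e}\|z_{\mathbf{k}'}(\tau) - z_{\mathbf{k}}\|_2^2\right),
\end{align*}
provided the eigenvalues of $\operatorname{Im}(C_{\mathbf{k}'}(\tau))$ and $\operatorname{Im}(-C_{\mathbf{k}'}(\tau)^{-1})$ stay bounded below by some $\theta > 0$ — which follows from the hypothesis in Lemma~\ref{fact:thawed} that the width matrix eigenvalues are bounded below for $t\in[0,\tau]$, together with standard properties of the Siegel upper half-space (the map $C \mapsto -C^{-1}$ preserves $\mathfrak{S}^+(d)$, and on a compact time interval the flow keeps the relevant spectral quantities in a compact set).

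The next step is to convert the Gaussian envelope centered at the \emph{evolved} center $z_{\mathbf{k}'}(\tau)$ into one centered at the \emph{original} center $z_{\mathbf{k}'} = z_{\mathbf{k}'}(0)$. Here I would use the triangle inequality $\|z_{\mathbf{k}} - z_{\mathbf{k}'}\|_2 \le \|z_{\mathbf{k}} - z_{\mathbf{k}'}(\tau)\|_2 + \|z_{\mathbf{k}'}(\tau) - z_{\mathbf{k}'}\|_2$ combined with the elementary inequality $\|a+b\|_2^2 \le 2\|a\|_2^2 + 2\|b\|_2^2$ (equivalently $\|a\|_2^2 \ge \tfrac{1}{2}\|a+b\|_2^2 - \|b\|_2^2$), which yields
\begin{align*}
	\|z_{\mathbf{k}'}(\tau) - z_{\mathbf{k}}\|_2^2
	\ge \tfrac12 \|z_{\mathbf{k}} - z_{\mathbf{k}'}\|_2^2 - \|z_{\mathbf{k}'}(\tau) - z_{\mathbf{k}'}\|_2^2.
\end{align*}
The displacement $\delta_{\mathbf{k}'} := \|z_{\mathbf{k}'}(\tau) - z_{\mathbf{k}'}\|_2$ is finite: $z_{\mathbf{k}'}(t)$ solves the Hamiltonian system \eqref{eq:hamilton} with a sub-quadratic (hence globally Lipschitz-gradient on bounded sets, and at worst linearly growing flow) potential, so over the fixed interval $[0,\tau]$ the center moves a bounded distance depending only on $\tau$, $z_{\mathbf{k}'}$ and $V$. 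Substituting this back, the factor $\exp\left(\tfrac{\theta}{8\e}\delta_{\mathbf{k}'}^2\right)$ is absorbed into the prefactor, giving the claimed bound \eqref{eq:shifted_envelope1} with $\theta_{\mathbf{k}'}^\tau := \theta/2$ and $\gamma_{\mathbf{k}'}^\tau := w_{\mathbf{k}}^2 \gamma \exp\left(\tfrac{\theta}{4\e}\delta_{\mathbf{k}'}^2\right)$ after squaring (or the square root thereof, depending on whether one states the bound for $|c_{\mathbf{k}}|$ or $|c_{\mathbf{k}}|^2$ — matching the form of \eqref{eq:shifted_envelope1}, I would keep it at the level of the modulus). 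One should note that $\gamma_{\mathbf{k}'}^\tau$ depends on $\e$ through this exponential correction, which is consistent with the $\e$-dependence already flagged in the statement of Proposition~\ref{fact:exp_decay}.

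The main obstacle I anticipate is \emph{not} the envelope-shifting algebra, which is routine, but rather justifying cleanly that the spectral hypothesis of Lemma~\ref{fact:inner} is actually met by $C_{\mathbf{k}'}(\tau)$ — i.e., that both $\operatorname{Im}(C_{\mathbf{k}'}(\tau))$ \emph{and} $\operatorname{Im}(-C_{\mathbf{k}'}(\tau)^{-1})$ have eigenvalues bounded below (and above) uniformly, given only the lower bound $\rho$ on $\operatorname{Im}(C_{\mathbf{k}'}(\tau))$ assumed in Lemma~\ref{fact:thawed}. The identity $\operatorname{Im}(-C^{-1}) = \overline{C}^{-1} \operatorname{Im}(C)\, C^{-1}$ shows this reduces to controlling $\|C_{\mathbf{k}'}(\tau)\|$ from above, which in turn follows from the matrix equations of motion \eqref{eq:eq_motion} (or their non-variational counterpart): $C = PQ^{-1}$ with $Q, P$ solving a linear ODE driven by $\langle\nabla_x^2 V\rangle$, so on the compact interval $[0,\tau]$ a Gronwall argument bounds $\|P\|, \|Q\|$ and, via the symplecticity relations \eqref{eq:symplectic} (which force $\det Q \neq 0$ and give a lower bound on $\|Q^{-1}\|^{-1}$), also $\|Q^{-1}\|$. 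I would state this as a short preliminary observation, citing Lemma~\ref{fact:thawed}'s hypotheses and the sub-quadratic growth of $V$, and then feed the resulting $\theta, \Theta$ into Lemma~\ref{fact:inner}. A secondary minor point is that the weight $w_{\mathbf{k}}$ is bounded over the finite index set $\mathcal{K}$, which is immediate.
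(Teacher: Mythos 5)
Your proposal is correct, and its first half coincides with the paper's argument: write $c_{\mathbf{k}}(u^\tau_{\mathbf{k}'})=w_{\mathbf{k}}\langle g_{\mathbf{k}}\mid u^\tau_{\mathbf{k}'}\rangle$, note the phase $e^{iS/\e}$ is harmless, and apply the bound \eqref{eq:inner3} of Lemma~\ref{fact:inner} to get a Gaussian envelope centered at the evolved center $z_{\mathbf{k}'}(\tau)$. Where you diverge is the re-centering step. The paper expands $\|z_{\mathbf{k}}-z_{\mathbf{k}'}-\delta_{\mathbf{k}'}(\tau)\|_2^2$ exactly and bounds the cross term $\delta_{\mathbf{k}'}(\tau)^T(z_{\mathbf{k}}-z_{\mathbf{k}'})$ by $\delta(\tau)D_{\max}$, where $D_{\max}$ is the maximal distance between grid points; this keeps the full decay rate $\theta_{\mathbf{k}'}^\tau$ in the exponent at the price of a prefactor $\exp\bigl(\theta_{\mathbf{k}'}^\tau\delta(\tau)D_{\max}/4\e\bigr)$, and it implicitly uses that $\mathcal{K}$ is finite so that $D_{\max}<\infty$. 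You instead use $\|a\|_2^2\ge\tfrac12\|a+b\|_2^2-\|b\|_2^2$, which halves the decay rate to $\theta/2$ but needs only the finiteness of the single displacement $\|\delta_{\mathbf{k}'}(\tau)\|_2$, so your bound would survive on an unbounded grid; both prefactors are $\e$-dependent, consistent with the notation $\gamma_n^{\e,\tau}$ in Proposition~\ref{fact:exp_decay}, and since the downstream induction there only needs positive constants, the halved rate changes nothing structurally. A further point in your favour: you explicitly verify the spectral hypothesis of Lemma~\ref{fact:inner} for the evolved width matrix $C_{\mathbf{k}'}(\tau)$ (upper bound on $\|C_{\mathbf{k}'}(\tau)\|$ via Gronwall and \eqref{eq:symplectic}, then $\operatorname{Im}(-C^{-1})=\bar C^{-1}\operatorname{Im}(C)C^{-1}$), a step the paper passes over silently when it invokes Lemma~\ref{fact:inner} to produce $\beta_{\mathbf{k}'}^\tau,\theta_{\mathbf{k}'}^\tau$.
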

\begin{proof}
	Let $\mathbf{k}'\in\mathcal{K}$ and $\tau>0$.
	The definition of the coefficients $\mathcal{A}_{\mathcal{K}}u^\tau_{\mathbf{k}'}$ implies
	\begin{align*}
		|c_{\mathbf{k}}(u^\tau_{\mathbf{k}'})|
		=|(\mathcal{A}_{\mathcal{K}}u^\tau_{\mathbf{k}'})_{\mathbf{k}}|
		=w_{\mathbf{k}}|\langle g_{\mathbf{k}},u^\tau_{\mathbf{k}'}\rangle|
		\quad\text{for all $\mathbf{k}\in\mathcal{K}$},
	\end{align*}
	where the non-negative weights $w_{\mathbf{k}}\ge 0$ depend on the underlying quadrature rule and therefore, using Lemma~\ref{fact:inner}, we find constants $\beta_{\mathbf{k}'}^\tau,\theta_{\mathbf{k}'}^\tau>0$ such that 
	\begin{align*}
		|c_{\mathbf{k}}(u^\tau_{\mathbf{k}'})|
		\le\beta_{\mathbf{k}'}^\tau\exp\left(-\frac{\theta_{\mathbf{k}'}^\tau}{8\e}\|z_{\mathbf{k}}-z_{\mathbf{k}'}(\tau)\|_2^2\right),
	\end{align*}
	where $z_{\mathbf{k}'}(\tau)\in\R^{2d}$ is the center of the evolved basis function $u^\tau_{\mathbf{k}'}\in\mathcal{M}$.
	To bound this Gaussian envelope by a re-shifted envelope centered at the original point $z_{\mathbf{k}'}$ instead of the evolved center $z_{\mathbf{k}'}(\tau)$, we write the time-evolved grid in terms of the original grid as
	\begin{align*}
		z_{\mathbf{k}'}(\tau)
		=z_{\mathbf{k}'}+\delta_{\mathbf{k}'}(\tau)
	\end{align*}
	and introduce the maximal phase space shift
	\begin{align*}
		\delta(\tau)
		:=\max_{\mathbf{k}'\in\mathcal{K}}\|\delta_{\mathbf{k}'}(\tau)\|_2.
	\end{align*}
	Using the Cauchy--Schwarz inequality in $\R^d$, it then follows that
	\begin{align*}
		&\exp\left(-\frac{\theta_{\mathbf{k}'}^\tau}{8\e}\|z_{\mathbf{k}}-z_{\mathbf{k}'}(\tau)\|_2^2\right)
		=\exp\left(-\frac{\theta_{\mathbf{k}'}^\tau}{8\e}\|z_{\mathbf{k}}-z_{\mathbf{k}'}-\delta_{\mathbf{k}'}(\tau)\|_2^2\right)\\[1mm]
		&\qquad=\exp\left(-\frac{\theta_{\mathbf{k}'}^\tau}{8\e}\|z_{\mathbf{k}}-z_{\mathbf{k}'}\|_2^2\right)
		\exp\left(\frac{\theta_{\mathbf{k}'}^\tau}{4\e}\delta_{\mathbf{k}'}(\tau)^T(z_{\mathbf{k}}-z_{\mathbf{k}'})\right)
		\exp\left(-\frac{\theta_{\mathbf{k}'}^\tau}{8\e}\|\delta_{\mathbf{k}'}(\tau)\|_2^2\right)\\[1mm]
		&\qquad\le\exp\left(-\frac{\theta_{\mathbf{k}'}^\tau}{8\e}\|z_{\mathbf{k}}-z_{\mathbf{k}'}\|_2^2\right)
		\exp\left(\frac{\theta_{\mathbf{k}'}^\tau}{4\e}\delta(\tau)\|z_{\mathbf{k}}-z_{\mathbf{k}'}\|_2\right).
	\end{align*}
	Hence, if we denote by $D_{\max}>0$ the maximal distance $\|z_{\mathbf{k}}-z_{\mathbf{k}'}\|_2$ between two grid points in phase space and
	\begin{align*}
		\beta^\tau
		:=\max_{\mathbf{k}'\in\mathcal{K}}\,\exp\left(\frac{\theta_{\mathbf{k}'}^\tau}{4\e}\delta(\tau)D_{\max}\right),
	\end{align*}
	the bound in \eqref{eq:shifted_envelope1} follows for $\zeta_{\mathbf{k}'}^\tau=\beta_{\mathbf{k}'}^\tau\beta^\tau$.
\end{proof}
\begin{proof}[Proof (of Proposition~\ref{fact:exp_decay})]
	We present a proof by induction on $n\ge 0$.
	For $n=0$, the bound in \eqref{eq:exp_decay1} follows from Lemma~\ref{fact:shifted_envelope} if we replace $u^\tau_{\mathbf{k}'}$ by $\psi_0$.
	In particular, for this special case, the constants $\zeta_0^{\e,\tau}$ and $\theta_0^\tau$ do not depend on either $\e$ or $\tau$ and thus we could also write $\zeta_0$ and $\theta_0$.
	Now, let $n>1$ and assume that the bound in \eqref{eq:exp_decay1} holds for $n-1$.
	The recursion formula \eqref{eq:cn} yields
	\begin{align*}
		|c^{n,\tau}_{\mathbf{k}}|
		\le\sum_{\mathbf{k}'\in\mathcal{K}}|c^{n-1,\tau}_{\mathbf{k'}}||c_{\mathbf{k}}(u^\tau_{\mathbf{k}'})|
		\quad\text{for all $\mathbf{k}\in\mathcal{K}$},
	\end{align*}
	where the factor $|c^{n-1,\tau}_{\mathbf{k'}}|$ can be estimated according to the induction hypothesis and the second factor $|c_{\mathbf{k}}(u^\tau_{\mathbf{k}'})|$ according to Lemma~\ref{fact:shifted_envelope}.
	This means that we find constants $\zeta_{n-1}^{\e,\tau},\theta_{n-1}^\tau>0$ and $\zeta_{\mathbf{k}'}^\tau,\theta_{\mathbf{k}'}^\tau>0$ such that
	\begin{align*}
		|c^{n-1,\tau}_{\mathbf{k'}}|
		&\le\zeta_{n-1}^{\e,\tau}\exp\left(-\frac{\theta_{n-1}^\tau}{8\e}\|z_{\mathbf{k}'}-z_0\|_2^2\right)
		\quad\text{and}\\[1mm]
		|c_{\mathbf{k}}(u^\tau_{\mathbf{k}'})|
		&\le\zeta_{\mathbf{k}'}^\tau\exp\left(-\frac{\theta_{\mathbf{k}'}^\tau}{8\e}\|z_{\mathbf{k}}-z_{\mathbf{k}'}\|^2\right),
	\end{align*}
	and therefore we conclude that
	\begin{align*}
		\sum_{\mathbf{k}'\in\mathcal{K}}|c^{n-1,\tau}_{\mathbf{k'}}||c_{\mathbf{k}}(u^\tau_{\mathbf{k}'})|
		\le\zeta_{n-1}^{\e,\tau}\zeta^\tau\sum_{\mathbf{k}'\in\mathcal{K}}\exp\left(-\frac{\theta_{n-1}^\tau}{8\e}\|\tilde z_{\mathbf{k}'}\|_2^2\right)\exp\left(-\frac{\theta^\tau}{8\e}\|\tilde z_{\mathbf{k}}-\tilde z_{\mathbf{k}'}\|_2^2\right),
	\end{align*}
	where we introduced
	\begin{align*}
		\zeta^\tau
		:=\max_{\mathbf{k}'\in\mathcal{K}}\zeta_{\mathbf{k}'}^\tau
		>0
		\quad\text{and}\quad
		\theta^\tau
		:=\min_{\mathbf{k}'\in\mathcal{K}}\theta_{\mathbf{k}'}^\tau
		>0,
	\end{align*}
	as well as the shifted grid points $\tilde z_{\mathbf{k}}:=z_{\mathbf{k}}-z_0$.
	In Appendix~\ref{sec:Discrete Gaussian Convolution} we show that there exists a positive constant $c>0$, depending on $\theta_{n-1}^\tau,\theta^\tau,\e$ and the phase space grid, such that for all one-dimensional components $j=1,\dots,2d$ we have
	\begin{align*}
		&\sum_{k_j'\in\mathcal{K}_j}\exp\left(-\frac{\theta_{n-1}^\tau}{8\e}\left(z^{(j)}_{\mathbf{k}}\right)^2\right)\exp\left(-\frac{\theta^\tau}{8\e}\left(\tilde z^{(j)}_{\mathbf{k}}-\tilde z^{(j)}_{\mathbf{k}'}\right)^2\right)\\
		&\qquad\le c\exp\left(-\frac{1}{8\e}\frac{\theta_{n-1}^\tau\theta^\tau}{\theta_{n-1}^\tau+\theta^\tau}\left(\tilde z^{(j)}_{\mathbf{k}}\right)^2\right).
	\end{align*}
	Consequently, using the definition of the shifted grid $\tilde z_{\mathbf{k}}=z_{\mathbf{k}}-z_0$, we finally get	
	\begin{align*}
		&\sum_{\mathbf{k}'\in\mathcal{K}}\exp\left(-\frac{\theta_{n-1}^\tau}{8\e}\|\tilde z_{\mathbf{k}'}\|_2^2\right)\exp\left(-\frac{\theta^\tau}{8\e}\|\tilde z_{\mathbf{k}}-\tilde z_{\mathbf{k}'}\|_2^2\right)\\
		&\qquad\le c^{2d}\exp\left(-\frac{1}{8\e}\frac{\theta_{n-1}^\tau\theta^\tau}{\theta_{n-1}^\tau+\theta^\tau}\|z_{\mathbf{k}}-z_0\|_2^2\right),
	\end{align*}
	which proves the bound in \eqref{eq:exp_decay1} for
	\begin{align*}
		\zeta_n^{\e,\tau}
		=\zeta_{n-1}^{\e,\tau}\zeta^\tau c^{2d}
		\quad\text{and}\quad
		\theta_n^\tau
		=\frac{\theta_{n-1}^\tau\theta^\tau}{\theta_{n-1}^\tau+\theta^\tau}.
	\end{align*}
\end{proof}
The last proposition provides a bound for the magnitude of the coefficients $|c^{n,\tau}_{\mathbf{k}}|$.
Together with the error bound for a single TSTG step in Proposition~\ref{fact:TSTG1}, we are now ready to present the error bound for the concatenation.

\subsection{Global error estimate for the concatenation}
From Proposition~\ref{fact:TSTG1} we learn that the total error of a single TSTG propagation step can be decomposed into a time and a spatial component.
In particular, the error with respect to time consists of the error for the thawed Gaussian approximation of order $\mathcal{O}(\sqrt{\e})$ and the error for the numerical integration of order $\mathcal{O}(h_\tau^s/\e)$, whereas the spatial error consists of the error for the approximation of the initial datum $\psi_0$ in $\mathcal{V}_{\mathcal{K}}$ and the error for re-expansion of the time-evolved approximant $u_{\mathbf{k}}^\tau$ in $\mathcal{V}_{\mathcal{K}}$.
Our finial result generalizes this result for the concatenation of $n>1$ TSTG steps:
\begin{theorem}\label{fact:TSTG_global}
	Under the hypotheses of Proposition~\ref{fact:TSTG1}, there exists a positive constant $C>0$ such that the global error of the TSTG propagation method with $n\ge 1$ concatenated steps at time $t_n=n\tau$ is given by
	\begin{align}\label{eq:TSTG_global1}
		\|\psi(t_n)-\sum_{\mathbf{k}\in\mathcal{K}}c^{n,\tau}_{\mathbf{k}}\,g_{\mathbf{k},0}\|_{L^2(B_q)}
		\le Ct_n\left(\frac{h_\tau^s}{\e}+\sqrt{\e}\right)+E^{n,\tau},
	\end{align}
	where $E^{n,\tau}>0$ denotes the following bound for the total spatial discretization error:
	\begin{align*}
		E^{n,\tau}
		=E_{wp}(\psi_0)+Ct_n\left(\frac{1+h_\tau^s}{\e}\right).
	\end{align*}
\end{theorem}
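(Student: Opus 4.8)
The plan is to proceed by induction on $n$, using Proposition~\ref{fact:TSTG1} as the base case ($n=1$) and using the unitarity of $U(\tau)$ together with the recursion formula \eqref{eq:cn} to pass from step $n-1$ to step $n$. First I would write, for the inductive step, $\psi(t_n)=U(\tau)\psi(t_{n-1})$ and insert the approximant $\psi_{\mathcal{K}}^{n-1,\tau}=\sum_{\mathbf{k}}c^{n-1,\tau}_{\mathbf{k}}g_{\mathbf{k},0}$ from the previous step, so that by the triangle inequality
\begin{align*}
	\|\psi(t_n)-\psi_{\mathcal{K}}^{n,\tau}\|_{L^2(B_q)}
	\le\|U(\tau)\bigl(\psi(t_{n-1})-\psi_{\mathcal{K}}^{n-1,\tau}\bigr)\|_{L^2(B_q)}
	+\|U(\tau)\psi_{\mathcal{K}}^{n-1,\tau}-\psi_{\mathcal{K}}^{n,\tau}\|_{L^2(B_q)}.
\end{align*}
The second term is precisely the error of applying one TSTG step to the \emph{exact} initial datum $\psi_{\mathcal{K}}^{n-1,\tau}$, so it is controlled by the single-step analysis of Proposition~\ref{fact:TSTG1}, except that the initial datum is now a superposition $\sum_{\mathbf{k}'}c^{n-1,\tau}_{\mathbf{k}'}g_{\mathbf{k}',0}$ rather than a single Gaussian. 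This requires re-running the proof of Proposition~\ref{fact:TSTG1} with $\psi_0$ replaced by this superposition, which by linearity of $\mathcal{A}_{\mathcal{K}}$, $\mathcal{S}_{\mathcal{K}}$, $U(\tau)$ and of the single-basis-function propagation yields a bound of the form $C\tau(h_\tau^s/\e+\sqrt\e)+C\max_{\mathbf{k}}E_{wp}(u^\tau_{\mathbf{k}})$, where the crucial point is that the constant $C$ does not grow with $n$ because $\|c^{n-1,\tau}_{\mathbf{k}}\|_1$ stays bounded uniformly in $n$ — this is exactly what Proposition~\ref{fact:exp_decay} delivers, since the Gaussian envelope bound \eqref{eq:exp_decay1} gives $\|c^{n-1,\tau}_{\mathbf{k}}\|_1\le\gamma_{n-1}^{\e,\tau}\sum_{\mathbf{k}}\exp(-\theta_{n-1}^\tau\|z_{\mathbf{k}}-z_0\|_2^2/(8\e))$, a convergent geometric-type sum.

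The first term on the right is handled by unitarity: $\|U(\tau)(\cdot)\|_{L^2(\R^d)}=\|\cdot\|_{L^2(\R^d)}$, but since we only measure on $B_q$ we instead estimate $\|U(\tau)(\psi(t_{n-1})-\psi_{\mathcal{K}}^{n-1,\tau})\|_{L^2(B_q)}\le\|\psi(t_{n-1})-\psi_{\mathcal{K}}^{n-1,\tau}\|_{L^2(\R^d)}$. Here I would need the stronger full-space version of the induction hypothesis; the cleanest route is to carry the induction for the $L^2(\R^d)$-norm throughout (the single-step Proposition~\ref{fact:TSTG1} and the truncation Lemma~\ref{fact:truncation} are in fact stated in $L^\infty$ or localized norms, but the underlying estimates from Lemma~\ref{fact:inner} and Lemma~\ref{fact:thawed} are $L^2(\R^d)$ estimates, so the spatial error terms $E_{wp}$ absorb the difference up to the exponentially small truncation constant). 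With the induction hypothesis
\begin{align*}
	\|\psi(t_{n-1})-\psi_{\mathcal{K}}^{n-1,\tau}\|
	\le C t_{n-1}\Bigl(\frac{h_\tau^s}{\e}+\sqrt\e\Bigr)+E_{wp}(\psi_0)+C(n-1)\max_{\mathbf{k}\in\mathcal{K}}E_{wp}(u^\tau_{\mathbf{k}}),
\end{align*}
adding the single-step contribution $C\tau(h_\tau^s/\e+\sqrt\e)+C\max_{\mathbf{k}}E_{wp}(u^\tau_{\mathbf{k}})$ gives exactly \eqref{eq:TSTG_global1} with $t_n=t_{n-1}+\tau$ and $E^{n,\tau}=E_{wp}(\psi_0)+Cn\max_{\mathbf{k}}E_{wp}(u^\tau_{\mathbf{k}})$, closing the induction. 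Note that the term $E_{wp}(\psi_0)$ does not accumulate: the re-initialization replaces $\psi_0$ by its already-computed coefficient tensor, so only the re-expansion error $\max_{\mathbf{k}}E_{wp}(u^\tau_{\mathbf{k}})$ is incurred afresh at each step, which is why it carries the factor $n$ while $E_{wp}(\psi_0)$ does not.

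The main obstacle I expect is ensuring that the constant $C$ in \eqref{eq:TSTG_global1} is genuinely uniform in $n$. In the single-step proof, $C$ depends on $\|c_{\mathbf{k}}(\psi_0)\|_1$; in the concatenation this becomes $\|c^{n-1,\tau}_{\mathbf{k}}\|_1$, and a naive bound would let this grow by a factor per step (from the $\gamma_n^{\e,\tau}=\gamma_{n-1}^{\e,\tau}\gamma^\tau c^{2d}$ recursion in Proposition~\ref{fact:exp_decay}), which would destroy the linear-in-$t_n$ form. The resolution is that for a \emph{fixed} final time $T=n\tau$ the number of steps is $n=T/\tau$, so $\gamma_n^{\e,\tau}$ is bounded by a constant depending on $T/\tau$ but not growing once $\tau$ is fixed; alternatively, and more satisfyingly, one observes that the $\theta_n^\tau$ decrease to a positive limit (the harmonic-type recursion $1/\theta_n^\tau=1/\theta_{n-1}^\tau+1/\theta^\tau$ gives $\theta_n^\tau\sim\theta^\tau/n$, still positive) and the $\ell^1$ norm of a Gaussian-enveloped tensor is controlled by $\gamma_n^{\e,\tau}\cdot(\e/\theta_n^\tau)^{d}$ times a lattice constant — so one must track that this product stays bounded. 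I would absorb all of this into a single constant $C=C(T,\tau,\e,\text{grid},V)$ and state explicitly (as the theorem does implicitly via "there exists a positive constant $C$") that $C$ is independent of $h_\tau$ but may depend on the final time and the propagation step $\tau$; the honest statement is that the method is first-order-in-time-accurate with a constant that degrades gracefully as $\tau\to0$ through the number of re-initializations.
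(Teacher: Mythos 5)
Your proposal is correct and follows essentially the same route as the paper: the induction on $n$ is just the unrolled form of the paper's telescoping recursion $e_{n+1,\tau}\le e_{n,\tau}+\|U(\tau)\psi^{n,\tau}-\psi^{n+1,\tau}\|$, with each local error bounded, exactly as in the paper, by $\sum_{\mathbf{k}}|c^{n,\tau}_{\mathbf{k}}|\bigl(E_{\mathbf{k}}^\tau+E_{wp}(u^\tau_{\mathbf{k}})\bigr)$ via linearity and the single-step components of Proposition~\ref{fact:TSTG1}, and with Proposition~\ref{fact:exp_decay} controlling $\|c^{l,\tau}_{\mathbf{k}}\|_1$. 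Your extra remarks on the $L^2(B_q)$ versus $L^2(\R^d)$ issue in the unitarity step and on the $n$-dependence hidden in the constant are legitimate refinements of points the paper simply absorbs into $C=c^{max}\max_{l}\|c^{l,\tau}_{\mathbf{k}}\|_1$, not deviations from its argument.
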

\begin{remark}
	Recall that $h_\tau$ is the step size of the numerical integrator for the underlying system of ODEs in section~\ref{sec:Methods for Propagating Gaussian Wave Packets}, while $\tau$ is the TSTG step size.
	In particular, one typically chooses $h_\tau=\tau/m$ for a positive integer $m\ge 1$.
	Moreover, we note that, in order to balance the error in
	\begin{align*}
		\frac{h_\tau^s}{\e}+\sqrt{\e},
	\end{align*}
	one obtains the condition $h_\tau=\mathcal{O}(\e^{3/2s})$, where $s\ge 1$ is the order of the integrator.
	In particular, we get $h_\tau=\mathcal{O}(\e^{3/2})$ for $s=1$ and $h_\tau=\mathcal{O}(\e^{3/4})$ for $s=2$, which does not seem to be as efficient as the Gaussian beam method at $h_\tau=\mathcal{O}(\sqrt{\e})$.
	However, for the TSTG method, numerical integration only needs to be performed for the time interval $[0,\tau]$ since the numerical solution at time $t_n=n\tau$ is obtained by concatenating $n>1$ TSTG steps, without additional numerical integration but only via the computation of the update coefficients $c^{n,\tau}_{\mathbf{k}}$.
	Therefore, the total number $N_{\operatorname{GB}}=\mathcal{O}(t_n/\sqrt{\e})$ of time steps for the Gaussian beam method must be compared with $N_{\operatorname{TSTG}}=\mathcal{O}(\tau/\e^{-3/2s})$.
\end{remark}
\begin{proof}
	Again, let $\|\bullet\|$ denote the $L^2$-norm on $B_q$.
	For $n\ge 1$ we define
	\begin{align*}
		e_{n,\tau}
		:=\|\psi(n\tau)-\mathcal{A}_{\mathcal{K}}^*\left(\mathcal{R}_{\mathcal{K}}^\tau\right)^n\mathcal{A}_{\mathcal{K}}\psi_0\|
		=\|\psi(n\tau)-\psi^{n,\tau}\|.
	\end{align*}
	Using that $U(\tau)$ is unitary, we obtain the recursion
	\begin{align*}
		e_{n+1,\tau}
		&=\|U(\tau)\psi(n\tau)-\psi^{n+1,\tau}\|
		=\|U(\tau)\big(\psi(n\tau)-\psi^{n,\tau}+\psi^{n,\tau}\big)-\psi^{n+1,\tau}\|\\
		&\le\|\psi(n\tau)-\psi^{n,\tau}\|+\|U(\tau)\psi^{n,\tau}-\psi^{n+1,\tau}\|
		=e_{n,\tau}+\|U(\tau)\psi^{n,\tau}-\psi^{n+1,\tau}\|,
	\end{align*}
	where the second summand is the local error of the $n$-th step.
	Hence, the global error $e_{n,\tau}$ after $n$ steps can be expressed in terms of the local errors as
	\begin{align*}
		e_{n,\tau}
		=e_{1,\tau}+\sum_{l=1}^{n-1}\|U(\tau)\psi^{l,\tau}-\psi^{l+1,\tau}\|.
	\end{align*}
	We note that $e_{1,\tau}$ is the error after a single propagation step in Proposition~\ref{fact:TSTG1}.
	Furthermore, for $1\le l\le n-1$ the definition of the coefficients $c^{l,\tau}_{\mathbf{k}}$ in \eqref{eq:cn} yields
	\begin{equation*}
		\begin{split}
			&\|U(\tau)\psi^{l,\tau}-\psi^{l+1,\tau}\|
			=\|\sum_{\mathbf{k}\in\mathcal{K}}c^{l,\tau}_{\mathbf{k}}\,g_{\mathbf{k}}(\tau)-\psi^{l+1,\tau}\|\\
			&\qquad\le\sum_{\mathbf{k}\in\mathcal{K}}|c^{l,\tau}_{\mathbf{k}}|\Big(\|\,g_{\mathbf{k}}(\tau)-u_{\mathbf{k}}^\tau\|+\|u_{\mathbf{k}}^\tau-\sum_{\mathbf{k}'\in\mathcal{K}}c_{\mathbf{k}'}(u^\tau_{\mathbf{k}})\,g_{\mathbf{k}',0}\|\Big)\\
			&\qquad\le\sum_{\mathbf{k}\in\mathcal{K}}|c^{l,\tau}_{\mathbf{k}}|\Big(E_{\mathbf{k}}^\tau+E_{wp}(u^\tau_{\mathbf{k}})\Big).
		\end{split}
	\end{equation*}
	Consequently, using once more the bounds for $E_{\mathbf{k}}^\tau$ in \eqref{eq:error_integrator2} and for $E_{wp}(u^\tau_{\mathbf{k}})$ in Appendix~\ref{sec:Reconstruction Error} with corresponding constants $c_{\mathbf{k}}$ and $\tilde c_{\mathbf{k}}$, respectively, the bound in \eqref{eq:TSTG_global1} follows for the constant
	\begin{align*}
		C
		=c^{max}\max_{l=0,\dots,n-1}\|c^{l,\tau}_{\mathbf{k}}\|_1,
		\quad\text{where}\quad
		c^{max}
		:=\max\left(\max_{\mathbf{k}\in\mathcal{K}}\,\tilde c_{\mathbf{k}},\,\max_{\mathbf{k}\in\mathcal{K}}\,c_{\mathbf{k}}\right).
	\end{align*}
\end{proof}
The previous theorem proves that the error for the TSTG propagation increases linearly with the number $n$ of propagation steps, where the corresponding constant depends on the errors introduced by the discretization of the wave packet transform, the thawed Gaussian approximation and the integration of the equations of motion.
For the numerical experiments presented in the next section, we examine an error bound based on a direct computation of
\begin{align}\label{eq:error}
	\operatorname{err}_{\mathcal{K}}^{l,\tau}
	:=\sum_{\mathbf{k}\in\mathcal{K}}|c^{l,\tau}_{\mathbf{k}}|\Big(E_{\mathbf{k}}^\tau+E_{wp}(u^\tau_{\mathbf{k}})\Big)
\end{align}
for all $l=1,2,\dots,n-1$, using the split-step Fourier method for the propagation of the basis functions. Future research will address the derivation of a practical \emph{a posteriori} error bound to be used in \eqref{eq:error} for implementing the TSTG method with adaptive step sizes or adaptive mesh refinements.

\section{Numerical Results}\label{sec:Numerical Results}
We demonstrate the capabilities of the TSTG method with a series of examples.
We first examine the discretization of the wave packet transform that is used to decompose the initial wave function and for the re-expansion of the time-evolved basis as described in \S\ref{sec:Discretizing the Wave Packet Transform}.
Afterwards, we test the method by computing the full wave function of the one-dimensional harmonic oscillator for different propagation times~$\tau$ and step sizes $h_\tau$.
Moreover, we reproduce the numerical results of Kong~\emph{et al.} for a one-dimensional double-well potential.
In addition to Kong~\emph{et al.}, who used non-variationally evolving Gaussians for the propagation of the basis functions, we also used variational Gaussians to compare both methods.
\begin{remark}
	The following numerical examples support the main result presented in Theorem~\ref{fact:TSTG_global} and show that the estimate in \eqref{eq:TSTG_global1} is indeed a workable error bound.
	Our experiments show how the errors depend on the underlying method for propagating the basis functions (variationally vs. non-variationally evolving thawed Gaussians).
	Since the capabilities of the TSTG method itself have already been presented by Kong~\emph{et al.}, we concentrate on one-dimensional numerical experiments for the error analysis.
	For multidimensional numerical experiments on the TSTG method and comparison with other methods, we refer to \cite[Results]{Kong:2016}.
\end{remark}
%

\subsection{Approximation of the initial wave function}\hfill\\[2mm]
We present numerical experiments for the approximation of a Gaussian wave function with uniform Riemann sums according to Proposition~\ref{fact:err_RS} for
\begin{align}\label{eq:initial_dw}
	\psi_0(x)
	=(\pi\e)^{-1/4}\exp\left(-\frac{1}{2\e}(x+\sqrt{2\eta})^2\right),\quad
	\eta
	=1.3544,
\end{align}
which is later used in \S\ref{sub:One-dimensional double-well potential} as initial wave function for the double-well potential.
Figure~\ref{fig1} shows the reconstruction errors in the supremum norm as a function of grid points for different truncation boxes $B=[-b_q,b_q]\times[-b_p,b_p]$, where we used the same number of grid points for both intervals.
\begin{figure}
	\includegraphics[width=\textwidth]{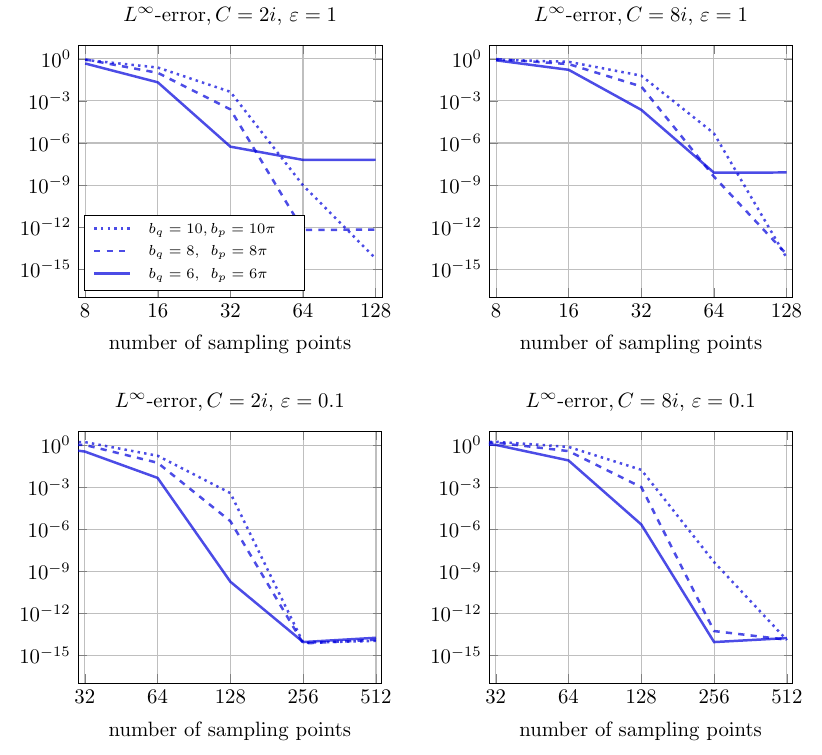}
	\caption{The initial wave function is approximated using the discretization of the wave packet transform.
		The four panels show the approximation errors for different combinations of $C$ (width of the basis functions) and $\e$.
		The number of sampling points that are used to achieve a given accuracy depends on $\e$ and the phase space box $B$.}\label{fig1}
\end{figure}
For each column of Figure~\ref{fig1} (the width $C$ of the basis functions is fixed here) we compare the two choices $\e=1$ (top) and $\e=0.1$ (bottom).
All panels show that larger phase space boxes yield a worse decay of the error, which is in accordance with Lemma~\ref{fact:truncation}.
In particular, the upper two plots show that for the smallest box (solid lines) the truncation error is reached after approximately 64 grid points (plateaus) and we see that the number of grid points needed to achieve a given tolerance increases with decreasing $\e$, since the small value of $\e$ corresponds to a narrow Gaussian.

\subsection{One-dimensional harmonic oscillator}\label{sub:One-dimensional harmonic oscillator}\hfill\\[2mm]
In this example we consider the quantum harmonic oscillator, which corresponds to the quadratic potential $V(x)=x^2/2$.
For the initial datum we chose the Gaussian wave packet $\psi_0=g_{z_0}^{C_0}$ with $z_0=(1,0)^T, C_0=1i$ and $\e\in\{0.1,1\}$.
In particular, the analytic solution is known to be, see \cite[Theorem~2.5]{Hagedorn:1998},
\begin{align*}
	\psi_{ref}(t)
	=(\pi\e)^{-1/4}\exp\left(-\frac{1}{2\e}\big(x-q(t)\big)^2+\frac{i}{\e}p(t)\big(x-q(t)\big)+\frac{i}{\e}S(t)-\frac{i}{2}t\right),
\end{align*} 
where $q(t),p(t)$ and $S(t)$ are given by
\begin{align*}
	q(t)
	&=q_0\cos(t)+p_0\sin(t),\quad 
	p(t)
	=p_0\cos(t)-q_0\sin(t),\\
	S(t)
	&=-\frac{1}{2}\sin(t)\Big(\big(q_0^2-p_0^2\big)\cos(t)+2q_0p_0\sin(t)\Big).
\end{align*}
The discretization of the wave packet transform was based on the phase space box $B=[-8,8]\times[-8\pi,8\pi]$, where we used 64 grid points in position space, 32 grid points in momentum space and the width parameter $C=4i$ for the basis functions.
The propagation of the basis functions was implemented with the second-order variational splitting integrator in \cite[section~7.5]{Lasser:2020}.
Figure~\ref{fig2} shows the $L^2$-error between the TSTG method and the analytic solution on the spatial interval $B_q=[-8,8]$ for $\e=1$ and two choices of $\tau=0.1$ (red) and $\tau=0.01$ (black).
The step size for the time integration was $h_\tau=1\cdot 10^{-3}$.
\begin{figure}
	\includegraphics[width=\textwidth]{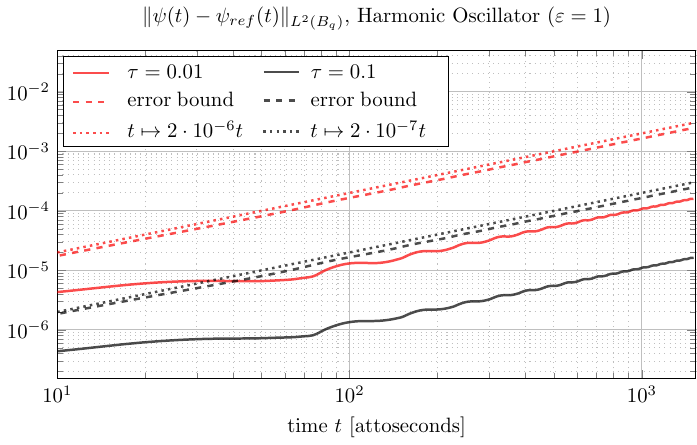}
	\caption{Evolution of the $L^2$-error between the TSTG method and the analytic solution $\psi_{ref}$ for the harmonic oscillator ($\e=1$).
		The errors increase linearly with the number of concatenated steps.
		The time range covers approximately 15 oscillations.
		}\label{fig2}
\end{figure}
The dashed lines indicate the error bound of Theorem~\ref{fact:TSTG_global} based on a direct evaluation of the error bounds $\operatorname{err}_{\mathcal{K}}^{l,\tau}$ in \eqref{eq:error}, where we used again the analytic solution to compute the errors $E_{\mathbf{k}}^\tau$.
We added the linear functions $t\mapsto 2\cdot 10^{-6}t$ (dotted red) and $t\mapsto 2\cdot 10^{-7}t$ (dotted black) to verify that the error increases linearly with the number of TSTG steps.
We note that for $\tau=0.01$ we need 10 times the number of concatenations compared to $\tau= 0.1$ and therefore the slopes of the red and black lines differ by a factor of 10.
To keep the number of TSTG steps and thus the total error small, we recognize that the propagation time $\tau$ should be chosen as large as possible.

Figure~\ref{fig3} shows the $L^2$-error for $\e=0.1$.
\begin{figure}
	\includegraphics[width=\textwidth]{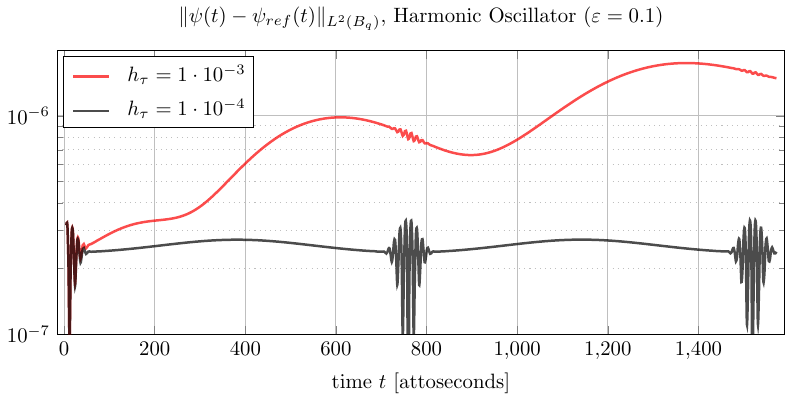}
	\caption{Evolution of the $L^2$-error between the TSTG method and the analytic solution for the harmonic oscillator ($\e=0.1$) for different step sizes $h_\tau$.
		The error increases faster for the coarser time grid (red curve).}\label{fig3}
\end{figure}
Computations were based on 128 grid points in position and momentum space and $\tau=0.01$ for two step sizes $h_\tau=1\cdot10^{-3}$ and $h_\tau=1\cdot10^{-4}$.
For the larger choice of $h_\tau$ (red curve) we see that the error increases faster, which is in accordance with our theoretical result in Proposition~\ref{fact:error_time_discrete}.
For the black curve we can see a periodic pattern (due to the oscillations of the solution) and the linear increase of the error is imperceptible over the time range.
We note that the errors in Figure~\ref{fig3} also show periodic-like oscillations and the linear increase becomes visible because of the long time range (with respect to $\e$).

\subsection{One-dimensional double-well potential}\label{sub:One-dimensional double-well potential}\hfill\\[2mm]
In our last numerical experiment we follow the presentation in \cite[Results]{Kong:2016} by using the one-dimensional double-well potential
\begin{align*}
	V(x)
	=\frac{x^4}{16\eta}-\frac{x^2}{2}
\end{align*}
together with the initial wave function in \eqref{eq:initial_dw} for $\e=1$, which is a model for quantum tunneling.
As for the harmonic oscillator potential, we used again the phase space box $B=[-8,8]\times[-8\pi,8\pi]$ with 64 points in position and momentum space and $C=4i$ for the basis functions.
In addition to the variational Gaussians, we implemented the non-variational Gaussians based on the St\"ormer-Verlet method, see e.g. \cite[chapter~I.1.4]{Hairer:2006}, which have also been used by Kong~\emph{et al.}.
For the reference solution we implemented the split-step Fourier method, using 256 points in the range $B_q=[-8,8]$ with time increment $\tau=0.01$.
The step size $h_\tau=0.001$ was used for both the variational and the non-variational Gaussian propagation.

The upper panels of Figure~\ref{fig4} show the $L^2$-error between the TSTG method and the reference solution for the variational Gaussians (left) and the non-variational Gaussians (right) together with the error bounds of Theorem~\ref{fact:TSTG_global} (dashed lines).
\begin{figure}
	\includegraphics[width=\textwidth]{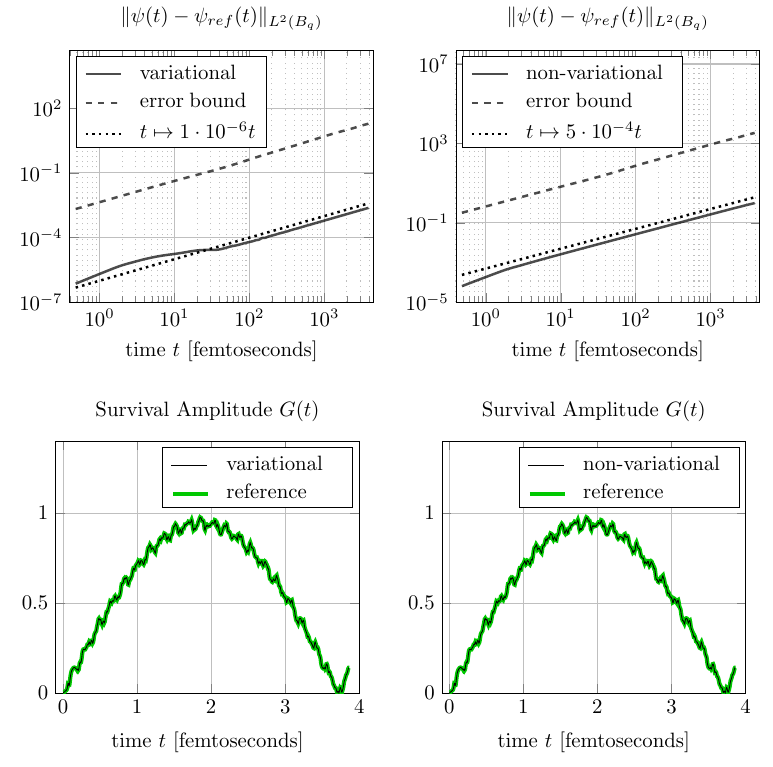}
	\caption{Comparison between variational Gaussians (left) and non-variational Gaussians (right) for the double-well potential.
		Both propagation methods show good agreement compared to benchmark computations.
		Top: Error for the full wave function; Bottom: Survival amplitude;}\label{fig4}
\end{figure}
The lower panels compare the TSTG method with the reference solution for the so-called survival amplitude (overlap between the $\psi(x,t)$ and the mirror image of the initial state on the opposite side of the double-well), which is defined by
\begin{align*}
	G(t)
	:=\int_{\R}\overline{\psi_0(-x)}\psi(x,t)\,\mathrm{d}x
\end{align*}
and is a measure for the tunneling amplitude.\\

The results in Figure~\ref{fig4} show that the TSTG method accurately reproduced the full wave function and the survival amplitude.
The experiments also show that the $L^2$-error increases linearly (approx. as $t\mapsto 10^{-6}t$ for the variational Gaussians), whereas for the non-variational Gaussians the rate is larger (approx. $t\mapsto 5\cdot 10^{-4}t$).
Furthermore, in Figure~\ref{fig5} we compare the TSTG method with the reference solution for the energy expectation values (top) and the relative errors (bottom).
\begin{figure}
	\includegraphics[width=\textwidth]{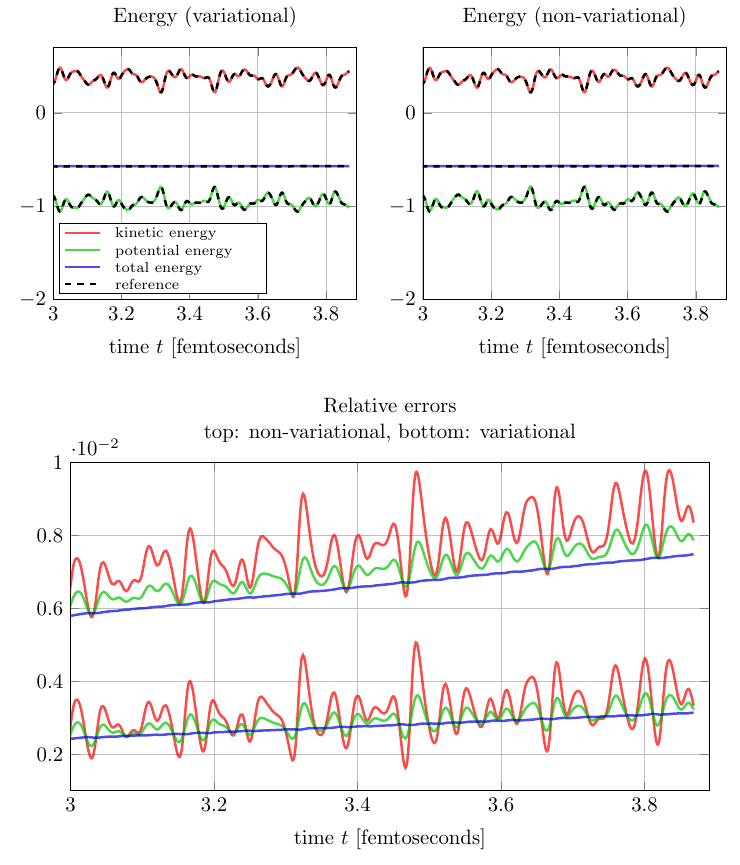}
	\caption{Evolution of energy expectation values (top) and the relative error to the reference solution (bottom) for the variational Gaussians and the non-variational Gaussians between 12,000 and 16,000 TSTG propagation steps.
		}\label{fig5}
\end{figure}
For better illustration we only plotted the time range of the last 4,000 of a total of 16,000 TSTG propagation steps.
We can see that the expectation values of the reference solution are very well approximated even after very long running times.
In particular, the slopes of the blue lines in the lower panel show that the error for the non-variational Gaussians (upper curves) increases faster.

\section{Conclusion and outlook}
In the previous sections we derived a workable error bound for the time-sliced thawed Gaussian propagation method.
The method combines the discretization of the wave packet transform with thawed Gaussian approximations for the propagation of the basis functions.
To provide a mathematical formulation of the TSTG method, we introduced the quadrature-based analysis, synthesis and re-initialization operators $\mathcal{A}_{\mathcal{K}},\mathcal{S}_{\mathcal{K}}$ and $\mathcal{R}^\tau_{\mathcal{K}}$, which allow to write the approximate solution at time $t_n=n\tau$ as
\begin{align*}
	\psi(t_n)
	\approx\psi_{\mathcal{K}}^{n,\tau}
	=\mathcal{S}_{\mathcal{K}}\left(\mathcal{R}_{\mathcal{K}}^\tau\right)^n\mathcal{A}_{\mathcal{K}}\psi_0.
\end{align*}
The algorithm has been implemented in MATLAB to underline our theoretical results and to show that the global error of the method increases linearly with the number $n$ of time steps, regardless of the thawed Gaussian method (variational or non-variational) and the order of the time integrator used.
In the multidimensional setup the method could be improved to a certain extent by using different quadrature rules for the discretization of the wave packet transform.
To make the method applicable especially to high-dimensional systems, the curse of dimensionality must be overcome and the detailed mathematical formulation presented in this paper provides the theoretical fundamentals for combining the method with TT-techniques, which we plan to explore in our future research.

\subsection{Acknowledgments}
Fruitful discussions with Victor S. Batista and Micheline B. Soley are gratefully acknowledged.

\appendix
\section{Analysis and Synthesis Operator}\label{sec:Analysis and Synthesis Operator}
\begin{lemma}
	For $x,y\in\C^{\mathcal{K}}$ and a positive weight $w\in\R^{\mathcal{K}},\,w_{\mathbf{k}}>0$ for all $\mathbf{k}\in\mathcal{K}$, we define the weighted inner product
	\begin{align*}
		\langle x,y\rangle_w
		:=\sum_{\mathbf{k}\in\mathcal{K}}\overline{x_{\mathbf{k}}}\,y_{\mathbf{k}}\,w_{\mathbf{k}}^{-1}.
	\end{align*}
	Moreover, for a given phase space grid $\{z_{\mathbf{k}}\}_{\mathbf{k}\in\mathcal{K}}$ let $\mathcal{A}_{\mathcal{K}}$ be the operator that maps a square-integrable function $\psi\in L^2(\R^d)$ to the coefficient tensor $(c_{\mathbf{k}}(\psi))$, as well as $\mathcal{S}_{\mathcal{K}}$ the corresponding synthesis operator.
	Then,
	\begin{align*}
		\langle\mathcal{S}_{\mathcal{K}}c\mid\psi\rangle_{L^2(\R^d)}
		=\langle c,\mathcal{A}_{\mathcal{K}}\psi\rangle_w
		\quad\text{for all $\psi\in L^2(\R^d)$ and $c\in\C^{\mathcal{K}}$.}
	\end{align*}
\end{lemma}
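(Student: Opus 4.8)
The plan is to verify the adjoint relation by a direct, finite computation, using only the definitions of $\mathcal{A}_{\mathcal{K}}$ and $\mathcal{S}_{\mathcal{K}}$ together with the convention that the $L^2(\R^d)$ inner product is antilinear in its first argument. Since the index set $\mathcal{K}$ is finite, every sum that appears is finite and no convergence issue arises; the whole statement is a bookkeeping identity.

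First I would expand the left-hand side. By definition of the synthesis operator, $\mathcal{S}_{\mathcal{K}}c=\sum_{\mathbf{k}\in\mathcal{K}}c_{\mathbf{k}}\,g_{\mathbf{k},0}$, and pulling the scalars $c_{\mathbf{k}}$ out of the antilinear slot gives
\begin{align*}
	\langle\mathcal{S}_{\mathcal{K}}c\mid\psi\rangle_{L^2(\R^d)}
	=\sum_{\mathbf{k}\in\mathcal{K}}\overline{c_{\mathbf{k}}}\,\langle g_{\mathbf{k},0}\mid\psi\rangle.
\end{align*}
Next I would expand the right-hand side. By definition of the analysis operator, $(\mathcal{A}_{\mathcal{K}}\psi)_{\mathbf{k}}=c_{\mathbf{k}}(\psi)=w_{\mathbf{k}}\langle g_{\mathbf{k},0}\mid\psi\rangle$, so the weighted inner product unfolds to
\begin{align*}
	\langle c,\mathcal{A}_{\mathcal{K}}\psi\rangle_w
	=\sum_{\mathbf{k}\in\mathcal{K}}\overline{c_{\mathbf{k}}}\,\bigl(w_{\mathbf{k}}\langle g_{\mathbf{k},0}\mid\psi\rangle\bigr)\,w_{\mathbf{k}}^{-1}
	=\sum_{\mathbf{k}\in\mathcal{K}}\overline{c_{\mathbf{k}}}\,\langle g_{\mathbf{k},0}\mid\psi\rangle,
\end{align*}
where the weight $w_{\mathbf{k}}>0$ introduced in the definition of $\mathcal{A}_{\mathcal{K}}$ cancels against the compensating factor $w_{\mathbf{k}}^{-1}$ built into $\langle\cdot,\cdot\rangle_w$. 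Comparing the two displays term by term yields the claimed identity for every $\psi\in L^2(\R^d)$ and $c\in\C^{\mathcal{K}}$.

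There is essentially no obstacle here; the only point deserving a moment's care is the placement of the complex conjugate, which must land on $c_{\mathbf{k}}$ on both sides. This is consistent precisely because $\langle\cdot,\cdot\rangle_w$ is \emph{defined} to be antilinear in its first slot, matching the antilinear-in-the-first-slot convention chosen for the $L^2$ inner product. The compensating pair of weights $w_{\mathbf{k}}$ and $w_{\mathbf{k}}^{-1}$ is exactly what makes $\mathcal{A}_{\mathcal{K}}$ and $\mathcal{S}_{\mathcal{K}}$ formal adjoints; one could equivalently absorb the weights and state the result with the unweighted $\ell^2$ pairing, but the present formulation keeps the role of the quadrature weights transparent.
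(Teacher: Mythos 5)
Your proof is correct and follows essentially the same route as the paper: expand both sides using the definitions of $\mathcal{S}_{\mathcal{K}}$, $\mathcal{A}_{\mathcal{K}}$ and the weighted inner product, and observe that the factor $w_{\mathbf{k}}$ in the coefficients cancels against $w_{\mathbf{k}}^{-1}$ in $\langle\cdot,\cdot\rangle_w$. The only cosmetic difference is that the paper writes the computation as a single chain starting from the $L^2$ integral, whereas you expand the two sides separately and compare.
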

\begin{proof}
	Let $c\in\C^{\mathcal{K}}$ and $\psi\in L^2(\R^d)$.
	By definition of the synthesis operator we have
	\begin{align*}
		\langle\mathcal{S}_{\mathcal{K}}c\mid\psi\rangle_{L^2(\R^d)}
		&=\int_{\R^d}\sum_{\mathbf{k}\in\mathcal{K}}\overline{c_{\mathbf{k}}\,g_{\mathbf{k},0}(x)}\psi(x)\,\mathrm{d}x\\
		&=\sum_{\mathbf{k}\in\mathcal{K}}\overline{c_{\mathbf{k}}}\,w_{\mathbf{k}}^{-1}\,w_{\mathbf{k}}\langle g_{\mathbf{k},0}\mid\psi\rangle_{L^2(\R^d)}
		=\langle c,\mathcal{A}_{\mathcal{K}}\psi\rangle_w.
	\end{align*}
\end{proof}
%

\section{Inner Products of Gaussians}\label{sec:Inner Products of Gaussians_appendix}
\begin{proof}[Proof (of Lemma~\ref{fact:inner})]
	The product of the functions $\overline{g_{z_1}^{C_1,\e}}$ and $g_{z_2}^{C_2,\e}$ is a Gaussian.
	To obtain an explicit representation, we rewrite the sum of the exponents
	\begin{align*}
		-\frac{i}{\e}\left(\frac{1}{2}(x-q_1)^T\bar{C_1}(x-q_1)+p_1^T(x-q_1)\right)+\frac{i}{\e}\left(\frac{1}{2}(x-q_2)^TC_2(x-q_2)+p_2^T(x-q_2)\right)
	\end{align*}
	as a quadratic function
	\begin{align*}
		\frac{i}{\e}\left(\frac{1}{2}(x-q_2)^TB(x-q_2)+(x-q_2)^Tb+c\right),
	\end{align*}
	where a short calculation shows that $B\in\C^{d\times d},\,b\in\C^d$ and $c\in\C$ are given by
	\begin{equation}\label{eq:inner_parametersBbc}
		\begin{split}
			B
			&:=C_2-\bar C_1,\quad
			b
			:=(p_2-p_1)-\bar C_1(q_2-q_1)
			\quad\text{and}\\
			c
			&:=-\frac{1}{2}(q_2-q_1)^T\bar C_1(q_2-q_1)-p_1^T(q_2-q_1).
		\end{split}
	\end{equation}
	In particular, since $\operatorname{Im}(-\bar C_1)=\operatorname{Im}(C_1)$ is positive definite and the sum of two real positive definite matrices is again positive definite, we conclude that $B$ is an element of the Siegel space $\mathfrak{S}^+(d)$.
	This yields the following representation for all $x\in\R^d$:
	\begin{align*}
		\overline{g_{z_1}^{C_1,\e}(x)}g_{z_2}^{C_2,\e}(x)
		=\alpha\exp\left[\frac{i}{\e}\left(\frac{1}{2}(x-q_2)^TB(x-q_2)+(x-q_2)^Tb+c\right)\right],
	\end{align*}
	where the positive constant $\alpha>0$ is given by
	\begin{align*}
		\alpha
		&:=(\pi\e)^{-d/2}\det(\operatorname{Im}C_1\operatorname{Im}C_2)^{1/4}.
	\end{align*}
	Therefore, we conclude that
	\begin{align*}
		\langle g_{z_1}^{C_1,\e},g_{z_2}^{C_2,\e}\rangle_{L^2(\R^d)}
		&=\int_{\R^d}\alpha\exp\left[\frac{i}{\e}\left(\frac{1}{2}(x-q_2)^TB(x-q_2)+(x-q_2)^Tb+c\right)\right]\,\mathrm{d}x\\
		&=\alpha\int_{\R^d}\exp\left[\frac{i}{\e}\left(\frac{1}{2}y^TBy+y^Tb+c\right)\right]\,\mathrm{d}y,
	\end{align*}
	where a formula of multivariate Gaussian integrals (see e.g. \cite[Appendix~A, Theorem~1]{Folland:1989}) yields
	\begin{align*}
		\int_{\R^d}\exp\left[\frac{i}{\e}\left(\frac{1}{2}y^TBy+y^Tb+c\right)\right]\,\mathrm{d}y
		=\frac{(2\pi\e)^{d/2}}{\sqrt{\det(-iB)}}\exp\left(-\frac{i}{2\e}b^TB^{-1}b+\frac{i}{\e}c\right).
	\end{align*}
	We note that the branch of the square root is determined by the requirement
	\begin{align*}
		\det(-iB)^{-1/2}
		>0
	\end{align*}
	if $-iB$ is real and positive definite.
	Moreover, using the formulas in equation \eqref{eq:inner_parametersBbc}, we obtain the following representation:
	\begin{align*}
		&\hspace{35mm}\alpha\frac{(2\pi\e)^{d/2}}{\sqrt{\det(-iB)}}\exp\left(-\frac{i}{2\e}b^TB^{-1}b+\frac{i}{\e}c\right)\\
		&=\frac{2^{d/2}\det(\operatorname{Im}C_1\operatorname{Im}C_2)^{1/4}}{\sqrt{\det(-iB)}}\exp\left(\frac{i}{2\e}(p_1+p_2)^T(q_1-q_2)\right)\cdots\\
		&\qquad\exp\left(\frac{i}{2\e}(p_2-p_1)^T(C_2-\bar C_1)^{-1}(C_2+\bar C_1)(q_2-q_1)\right)\cdots\\
		&\qquad\exp\left(\frac{i}{2\e}(p_2-p_1)^T(-B^{-1})(p_2-p_1)\right)\exp\left(\frac{i}{2\e}(q_2-q_1)^T(-\bar C_1-\bar C_1B^{-1}\bar C_1)(q_2-q_1)\right).
	\end{align*}
	In the last line we have two Gaussians: One with respect to the difference $p_2-p_1$ with width matrix $-B^{-1}$ and one for $q_2-q_1$ with width matrix $-\bar C_1-\bar C_1B^{-1}\bar C_1$.
	In particular, the Woodbury matrix identity, see e.g. \cite[Page~258]{Higham:2002}, yields
	\begin{align*}
		-\bar C_1-\bar C_1B^{-1}\bar C_1
		=\left(C_2^{-1}-\bar C_1^{-1}\right)^{-1}.
	\end{align*}
	Hence, since $Z\in\mathfrak{S}^+(d)$ implies $-Z^{-1}\in\mathfrak{S}^+(d)$ (see e.g. \cite[Theorem~4.64]{Folland:1989}), we conclude that both width matrices
	\begin{align*}
		-B^{-1}
		\quad\text{and}\quad
		\left(C_2^{-1}-\bar C_1^{-1}\right)^{-1}
	\end{align*}
	are in $\mathfrak{S}^+(d)$ and therefore we conclude that the block diagonal matrix $M$ in \eqref{eq:inner2} is an element of $\mathfrak{S}^+(2d)$.
	Putting together the above calculations we arrive at \eqref{eq:inner1}.\\
	
	To prove the bound in \eqref{eq:inner3}, we follow the idea of \cite[11.4~Lemma]{Swart:2008} and assume that the eigenvalues of $\operatorname{Im}(C_k)$ and $\operatorname{Im}(-C_k^{-1})$ are bounded from below by $\theta>0$ and from above by $\Theta>0$.
	Furthermore, let us introduce the real-valued Gaussian function
	\begin{align*}
		g_k^{\theta}(x)
		=(\pi\e)^{-d/4}\theta^{d/4}\exp\left(-\frac{\theta}{2\e}\|x-q_k\|_2^2\right),
		\quad k=1,2,\,x\in\R^d.
	\end{align*}
	Then, for all $x\in\R^d$, the spectral bounds imply that
	\begin{align*}
		|g_{z_k}^{C_k,\e}(x)|
		\le\det(\operatorname{Im}C_k)^{1/4}\theta^{-d/4}g_k^{\theta}(x)
		\le\Theta^{d/4}\theta^{-d/4}g_k^{\theta}(x),
	\end{align*}
	and therefore we obtain the following bound:
	\begin{equation}\label{eq:inner_part1}
		\begin{split}
			\left|\langle g_{z_1}^{C_1,\e},g_{z_2}^{C_2,\e}\rangle_{L^2(\R^d)}\right|
			&\le\theta^{-d/2}\Theta^{d/2}\langle g_1^{\theta},g_2^{\theta}\rangle_{L^2(\R^d)}\\
			&=\theta^{-d/2}\Theta^{d/2}\exp\left(-\frac{\theta}{4\e}\|q_2-q_1\|_2^2\right),
		\end{split}
	\end{equation}
	where the last equality follows by the formula in \eqref{eq:inner1}.
	Furthermore, combining Plancherel's theorem for the $\e$-rescaled Fourier transform $\F_\e\colon L^2(\R^d)\to L^2(\R^d)$, defined for all $p\in\R^d$ by
	\begin{align*}
		\F_\e\psi(p)
		:=(2\pi\e)^{-d/2}\int_{\R^d}\psi(x)e^{-ip\cdot x/\e}\,\mathrm{d}x,
	\end{align*}
	with a formula for the Fourier transform $\F_\e g_{z_k}^{C_k,\e}$, implies
	\begin{equation}\label{eq:inner_part2}
		\begin{split}
			\left|\langle g_{z_1}^{C_1,\e},g_{z_2}^{C_2,\e}\rangle_{L^2(\R^d)}\right|
			&=\left|\langle\F_\e g_{z_1}^{C_1,\e},\F_\e g_{z_2}^{C_2,\e}\rangle_{L^2(\R^d)}\right|\\
			&\le\theta^{-d/2}\Theta^{d/2}\exp\left(-\frac{\theta}{4\e}\|p_2-p_1\|_2^2\right).
		\end{split}
	\end{equation}
	Consequently, combining the bounds in \eqref{eq:inner_part1} and \eqref{eq:inner_part2} proves \eqref{eq:inner3} for
	\begin{align*}
		\zeta
		=\left(\frac{\Theta}{\theta}\right)^d.
	\end{align*}
\end{proof}
%

\section{Discrete Gaussian Convolution}\label{sec:Discrete Gaussian Convolution}
\begin{lemma}
	For $\sigma>0$ consider the one-dimensional Gaussian function
	\begin{align*}
		f_\sigma(t)
		:=\exp\left(-\frac{1}{2\sigma}t^2\right)
		\quad\text{for all $t\in\R$}.
	\end{align*}
	For arbitrary grid points $t_1<t_2<...<t_N$ let
	\begin{align}\label{eq:convolution1}
		h_i
		:=t_{i+1}-t_i,\,
		i
		=1,\dots,N-1
		\quad\text{and}\quad
		h
		:=\min_{i=1,\dots,N-1}h_i.
	\end{align}
	Then, for all $\sigma_1,\sigma_2>0$, there exists a constant $c>0$ such that for all $s\in\R$ we have
	\begin{align}\label{eq:convolution2}
		\sum_{k=1}^Nf_{\sigma_1}(t_k)f_{\sigma_2}(s-t_k)
		\le cf_{\sigma_1+\sigma_2}(s),
	\end{align}	
	where $c$ depends on $\sigma_1,\sigma_2$ and $h$, but not on $N$.
\end{lemma}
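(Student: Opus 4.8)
The plan is to reduce the two–variable estimate to a one–variable one by completing the square in the summation node $t_k$, and then to use the minimal spacing $h$ to bound the remaining discrete sum uniformly in $s$ and in $N$. For the first step I would collect the exponents of $f_{\sigma_1}(t_k)$ and $f_{\sigma_2}(s-t_k)$, namely $-\tfrac{1}{2\sigma_1}t_k^2-\tfrac{1}{2\sigma_2}(s-t_k)^2$, regard this as a quadratic polynomial in $t_k$, and put it in vertex form. A routine computation of the usual Gaussian–convolution type gives
\[
  f_{\sigma_1}(t_k)\,f_{\sigma_2}(s-t_k)
  = f_{\sigma_1+\sigma_2}(s)\, f_{\tilde\sigma}\bigl(t_k-\mu(s)\bigr),
  \quad
  \tilde\sigma:=\frac{\sigma_1\sigma_2}{\sigma_1+\sigma_2},\quad
  \mu(s):=\frac{\sigma_1}{\sigma_1+\sigma_2}\,s,
\]
so that, summing over $k$ and factoring out $f_{\sigma_1+\sigma_2}(s)$, it remains to prove that $\sum_{k=1}^N f_{\tilde\sigma}(t_k-\mu)\le c$ with a constant $c=c(\tilde\sigma,h)$ independent of $N$ and of the shift $\mu\in\R$; then \eqref{eq:convolution2} follows with that same $c$.

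For this remaining one–dimensional bound I would note that the shifted nodes $t_k-\mu$ are still pairwise $h$–separated, since $(t_{k'}-\mu)-(t_k-\mu)=t_{k'}-t_k\ge h$ for $k'>k$. Partitioning $\R=\bigcup_{j\in\Z}I_j$ into the half–open intervals $I_j=[jh,(j+1)h)$, each $I_j$ then contains at most one shifted node, whence $\sum_{k=1}^N f_{\tilde\sigma}(t_k-\mu)\le\sum_{j\in\Z}\max_{x\in\overline{I_j}}f_{\tilde\sigma}(x)$. On $\overline{I_0}$ and $\overline{I_{-1}}$ the maximum equals $f_{\tilde\sigma}(0)=1$; on each $\overline{I_j}$ with $j\ge1$ the function $f_{\tilde\sigma}$ is decreasing, so its maximum is $\exp(-j^2h^2/(2\tilde\sigma))$, and on each $\overline{I_j}$ with $j\le-2$ it is increasing, so its maximum is $\exp(-(j+1)^2h^2/(2\tilde\sigma))$. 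Summing these two geometrically decaying Gaussian tails yields the finite constant
\[
  c:=2+2\sum_{m=1}^\infty\exp\!\Bigl(-\frac{m^2h^2}{2\tilde\sigma}\Bigr),
\]
which depends only on $\tilde\sigma$ (hence only on $\sigma_1,\sigma_2$) and on $h$, and not on $N$ or $\mu$; this is exactly the bound required in the first step.

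The one point that needs care is precisely this last uniform–in–$\mu$ estimate: one should avoid comparing $f_{\tilde\sigma}(t_k-\mu)$ directly with an integral of $f_{\tilde\sigma}$ over a neighbouring interval of length $h$, because the cross term in $(t_k-\mu\pm h/2)^2$ grows with $|t_k-\mu|$ and spoils such a naive comparison. The interval–counting argument sidesteps this, using only the unimodality of $f_{\tilde\sigma}$ and the fact that an interval of length $h$ can hold at most one $h$–separated node; everything else is the elementary completion of squares together with a convergent Gaussian series (the case $N=1$ being trivial with $c=1$). I also note that the combination $1/\tilde\sigma=1/\sigma_1+1/\sigma_2$ appearing here is exactly what produces the harmonic–mean–type recursion $\theta_n^\tau=\theta_{n-1}^\tau\theta^\tau/(\theta_{n-1}^\tau+\theta^\tau)$ used in the proof of Proposition~\ref{fact:exp_decay}.
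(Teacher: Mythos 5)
Your proof is correct and follows the same overall route as the paper's: the completion of the square giving $f_{\sigma_1}(t_k)f_{\sigma_2}(s-t_k)=f_{\sigma_1+\sigma_2}(s)\,f_{\tilde\sigma}(t_k-\mu(s))$ with $\tilde\sigma=\sigma_1\sigma_2/(\sigma_1+\sigma_2)$ and $\mu(s)=\sigma_1 s/(\sigma_1+\sigma_2)$ is exactly the paper's first step. The two arguments differ only in how the remaining sum $\sum_{k=1}^N f_{\tilde\sigma}(t_k-\mu)$ is bounded uniformly in $\mu$ and $N$: the paper asserts $\sum_{k=1}^N f_{\tilde\sigma}(t_k-\mu)\le\sum_{k\in\Z}f_{\tilde\sigma}(hk)$ and then treats the lattice sum as a Riemann-sum approximation of $\frac{1}{h}\int_\R f_{\tilde\sigma}(t)\,\mathrm{d}t$, whereas you pigeonhole the $h$-separated shifted nodes into the intervals $[jh,(j+1)h)$ and use unimodality of $f_{\tilde\sigma}$, arriving at the explicit constant $c=2+2\sum_{m\ge 1}\exp\bigl(-m^2h^2/(2\tilde\sigma)\bigr)$. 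Your variant is in fact slightly more careful: the paper's comparison with the lattice sum centered at the peak is stated without justification (the shifted nodes need not lie on $h\Z$, and for general unimodal profiles such a comparison can fail), and your interval-counting step is precisely the argument that makes this rigorous, at the modest price of an additive constant. Since both routes produce a constant depending only on $\sigma_1,\sigma_2$ and $h$, and not on $N$ or the shift, nothing is lost for the application in the proof of Proposition~\ref{fact:exp_decay}.
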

\begin{proof}
	Let $s\in\R$.
	A short calculation shows that
	\begin{align*}
		f_{\sigma_1}(t_k)f_{\sigma_2}(s-t_k)
		=f_{\sigma_1+\sigma_2}(s)f_{\sigma_3}\left(s'-t_k\right),
	\end{align*}
	where we introduced the parameters
	\begin{align*}
		\sigma_3
		=\frac{\sigma_1\sigma_2}{\sigma_1+\sigma_2}
		\quad\text{and}\quad
		s'
		=\frac{\sigma_1s}{\sigma_1+\sigma_2}.
	\end{align*}
	Consequently, the sum in \eqref{eq:convolution2} can be written as
	\begin{align*}
		\sum_{k=1}^Nf_{\sigma_1}(t_k)f_{\sigma_2}(s-t_k)
		=f_{\sigma_1+\sigma_2}(s)\sum_{k=1}^Nf_{\sigma_3}\left(s'-t_k\right),
	\end{align*}
	where the sum at the right hand-side can be bounded independently of $s'$ as
	\begin{align*}
		\sum_{k=1}^Nf_{\sigma_3}\left(s'-t_k\right)
		\le\sum_{k\in\Z}f_{\sigma_3}\left(hk\right),
	\end{align*}
	where the minimal distance $h$ between consecutive grid points is defined in \eqref{eq:convolution1}.
	In particular, since the last sum can be viewed as a Riemann sum approximation to the integral
	\begin{align*}
		\frac{1}{h}\int_\R f_{\sigma_3}\left(t\right)\mathrm{d}t
		=\frac{\sqrt{2\pi\sigma_3}}{h},
	\end{align*}
	we conclude that there exists a positive constant $c>0$, depending on $\sigma_3$ and $h$, such that
	\begin{align*}
		\sum_{k\in\Z}f_{\sigma_3}\left(s'-t_k\right)
		\le c,
	\end{align*}
	which makes the proof complete.
\end{proof}
%

\section{Reconstruction Error}\label{sec:Reconstruction Error}
In the following, we prove that for all $\mathbf{k}\in\mathcal{K}$ there exists a positive constant $\tilde c_{\mathbf{k}}>0$ such that
\begin{align}\label{eq:ewp_bound_app}
	E_{wp}(u^\tau_{\mathbf{k}})
	\le\tilde c_{\mathbf{k}}\left(\frac{1+h_\tau^s}{\e}\right)\tau.
\end{align}
Therefore, let us fix $\mathbf{k}\in\mathcal{K}$ and decompose the Gaussian $u^\tau_{\mathbf{k}}$, which is the numerical approximation to the time-evolved Gaussian basis function $g_{\mathbf{k}}(\tau)$ after a short TSTG propagation time $\tau>0$, as follows:
\begin{align*}
	u^\tau_{\mathbf{k}}
	=g_{\mathbf{k},0}+\Big(u^\tau_{\mathbf{k}}-u_{\mathbf{k}}(\tau)\Big)+\Big(u_{\mathbf{k}}(\tau)-g_{\mathbf{k},0}\Big)
	=:g_{\mathbf{k},0}+R^1_\mathbf{k}(\tau)+R^2_\mathbf{k}(\tau),
\end{align*}
where $g_{\mathbf{k},0}$ is the initial basis function and $u_{\mathbf{k}}(\tau)\in\mathcal{M}$ the approximation in the manifold of complex Gaussians.
Using that $g_{\mathbf{k},0}$ is an element of $\mathcal{V}_{\mathcal{K}}$, the definition of the discretization error in \eqref{eq:error_wp} yields that $E_{wp}(g_{\mathbf{k},0})=0$ and therefore
\begin{align*}
	E_{wp}(u^\tau_{\mathbf{k}})
	\le E_{wp}(R^1_\mathbf{k}(\tau))+E_{wp}(R^2_\mathbf{k}(\tau)).
\end{align*}
Hence, it suffices to show that $E_{wp}(R^j_\mathbf{k}(\tau))=\mathcal{O}(\tau)$.
Firstly, we see that
\begin{align}\label{eq:wpR1}
	E_{wp}(R^1_\mathbf{k}(\tau))
	\le\|u^\tau_{\mathbf{k}}-u_{\mathbf{k}}(\tau)\|\left(1+\|\mathcal{A}_{\mathcal{K}}^*\mathcal{A}_{\mathcal{K}}\|\right),
\end{align}
where $\|\mathcal{A}_{\mathcal{K}}^*\mathcal{A}_{\mathcal{K}}\|$ denotes the operator norm of the linear operator $\mathcal{A}_{\mathcal{K}}^*\mathcal{A}_{\mathcal{K}}$.
In particular, a short calculation shows that
\begin{align*}
	\|\mathcal{A}_{\mathcal{K}}^*\mathcal{A}_{\mathcal{K}}\|
	\le\sum_{\mathbf{k}\in\mathcal{K}}w_{\mathbf{k}}
	:=W_\mathcal{K},
\end{align*}
where $w_{\mathbf{k}}\ge 0$ are the weights of the underlying quadrature rule.
Hence, combining \eqref{eq:wpR1} with the bound for $\|u^\tau_{\mathbf{k}}-u_{\mathbf{k}}(\tau)\|$ in \eqref{eq:thawed_integrator}, we conclude that
\begin{align*}
	E_{wp}(R^1_\mathbf{k}(\tau))
	\le c_{\mathbf{k}}^{(2)}\tau\frac{h_\tau^s}{\e}\left(1+W_\mathcal{K}\right).
\end{align*}
Finally, using that $u_{\mathbf{k}}(t)\in\mathcal{M}$ is the exact solution to
\begin{align*}
	i\e\partial_t u_{\mathbf{k}}(t)
	=-\frac{\e^2}{2}\Delta_x u_{\mathbf{k}}(t)+U_{q_{\mathbf{k}}(t)}u_{\mathbf{k}}(t),
	\quad
	u_{\mathbf{k}}(0)
	=g_{\mathbf{k},0},
\end{align*}
where $U_{q_{\mathbf{k}}}$ denotes the second-order Taylor polynomial of $V$ at $q_{\mathbf{k}}$, we conclude that $\|u_{\mathbf{k}}(\tau)-g_{\mathbf{k},0}\|$ can be estimated in terms of the time-dependent Hamiltonian
\begin{align*}
	H_{\mathbf{k}}(t)
	:=-\frac{\e^2}{2}\Delta_x+U_{q_{\mathbf{k}}(t)}.
\end{align*}
We have
\begin{align*}
	u_{\mathbf{k}}(\tau)-g_{\mathbf{k},0}
	=\int_0^\tau\dot u_{\mathbf{k}}(s)\,\mathrm{d}s
	=\frac{1}{i\e}\int_0^\tau H_{\mathbf{k}}(s)u_{\mathbf{k}}(s)\,\mathrm{d}s
\end{align*}
and a short calculation shows that
\begin{align*}
	-\frac{\e^2}{2}\Delta_x u_{\mathbf{k}}(s)
	=&\Bigg(\frac{1}{2}(x-q_{\mathbf{k}}(s))^TC_{\mathbf{k}}(s)^2(x-q_{\mathbf{k}}(s))+p_{\mathbf{k}}(s)^TC_{\mathbf{k}}(s)(x-q_{\mathbf{k}}(s))\dots\\
	&\qquad+\,\frac{1}{2}|p_{\mathbf{k}}(s)|^2-\frac{i\e}{2}\operatorname{tr}C_{\mathbf{k}}(s)\Bigg)u_{\mathbf{k}}(s).
\end{align*}
Hence, using an estimate for moments of Gaussians, see e.g. \cite[Lemma~3.8]{Lasser:2020}, we obtain
\begin{align*}
	\left\|-\frac{\e^2}{2}\Delta_x u_{\mathbf{k}}(s)\right\|
	=\frac{1}{2}|p_{\mathbf{k}}(s)|^2+\mathcal{O}(\sqrt{\e}),
\end{align*}
and similarly
\begin{align*}
	\left\|U_{q_{\mathbf{k}}(s)}u_{\mathbf{k}}(s)\right\|
	=|V(q_{\mathbf{k}}(s))|+\mathcal{O}(\sqrt{\e}).
\end{align*}
Altogether,
\begin{align*}
	\|u_{\mathbf{k}}(\tau)-g_{\mathbf{k},0}\|
	\le\frac{\tau}{\e}\left(\sup_{s\in[0,\tau]}\left(\frac{1}{2}|p_{\mathbf{k}}(s)|^2+|V(q_{\mathbf{k}}(s))|\right)+\rho_{\mathbf{k}}(\tau)\right)
	=:\frac{\tau}{\e}C_{\mathbf{k}},
\end{align*}
with $\rho_{\mathbf{k}}(\tau)=\mathcal{O}(\sqrt\e)$ uniformly in $\tau$.
This shows that
\begin{align*}
	E_{wp}(R^2_\mathbf{k}(\tau))
	\le\frac{\tau}{\e}C_{\mathbf{k}}\left(1+W_\mathcal{K}\right)
\end{align*}
and therefore the bound in \eqref{eq:ewp_bound_app} follows for
\begin{align*}
	\tilde c_{\mathbf{k}}
	=\max\left(c_{\mathbf{k}}^{(2)},C_{\mathbf{k}}\right)\left(1+\sum_{\mathbf{k}\in\mathcal{K}}w_{\mathbf{k}}\right).
\end{align*}

\end{document}